\newtheorem{theorem}{Theorem}[section]
\newtheorem{lemma}[theorem]{Lemma}
\newtheorem{definition}[theorem]{Definition}
\newtheorem{example}[theorem]{Example}
\newtheorem{proposition}[theorem]{Proposition}
\def\thm@space@setup{%
	\thm@preskip=\parskip \thm@postskip=0pt
}
\newcommand{\R}{\ensuremath{\mathbb{R}}}
\newcommand{\supp}{\textnormal{supp}}
\newcommand{\push}{(M_\lambda)_\#}
\newcommand{\cmark}{\ding{51}}
\newcommand{\xmark}{\ding{55}}
\DeclareMathOperator*{\argmax}{arg\,max}
\DeclareMathOperator*{\argmin}{argmin}
\newcommand{\Wtwo}{\mathcal{W}_2}
\newcommand{\sumitoN}{\sum_{i=1}^N}
\newcommand{\sumitoNlambdai}{\sumitoN \lambda_i}
\newcommand\curwidth{0.38\textwidth}
\newcommand\curfolder{img}
\numberwithin{equation}{section} 
\title{Approximative Algorithms for Multi-Marginal Optimal Transport and Free-Support Wasserstein Barycenters}
\author{Johannes von Lindheim\thanks{Institute of Mathematics,
			Technische Universit\"at Berlin,
			Strasse des 17. Juni 136, 10587 Berlin, Germany,
			vonlindheim@tu-berlin.de}}
\date{\today}
\begin{document}

\maketitle

\begin{abstract}
	Computationally solving multi-marginal optimal transport (MOT) with squared Euclidean costs for $N$ discrete probability measures has recently attracted considerable attention, in part because of the correspondence of its solutions with Wasserstein-$2$ barycenters, which have many applications in data science.
	In general, this problem is NP-hard, 
	calling for practical approximative algorithms.
	While entropic regularization has been successfully applied to approximate Wasserstein barycenters, this loses the sparsity of the optimal solution, making it difficult to solve the MOT problem directly in practice because of the curse of dimensionality.
	Thus, for obtaining barycenters, one usually resorts to fixed-support restrictions to a grid, which is, however, prohibitive in higher ambient dimensions $d$.
	In this paper, after analyzing the relationship between MOT and barycenters, we present two algorithms to approximate the solution of MOT directly, requiring mainly just $N-1$ standard two-marginal OT computations.
	Thus, they are fast, memory-efficient and easy to implement and can be used with any sparse OT solver as a black box.
	Moreover, they produce sparse solutions and show promising numerical results.
	We analyze these algorithms theoretically, proving upper and lower bounds for the relative approximation error.
\end{abstract}

\section{Introduction}
The multi-marginal optimal transport problem (MOT) is an increasingly popular generalization of the classical Monge--Kantorovich optimal transport problem to several marginal measures.
It was originally introduced by \cite{GS98} in the continuous setting for squared Euclidean costs and further generalized in various ways, e.g. to entropy regularized \cite{flows19BCN,tree21HRCK} and unbalanced variants with non-exact marginal constraints \cite{UMOT21BLNS}.
For a survey with general cost functions and their applications, e.g. in economics \cite{CE10teams} or density functional theory in physics \cite{DFT12B,DFT13C}, we refer to \cite{mot15P}.

Closely related to MOT, and among its most prominent applications, are computations of barycenters in the Wasserstein space, see \cite{ABM16brute,AC11barycenters,PRV20oncomputation}.
Wasserstein barycenters have nice mathematical properties, since they are the Fr\'echet means with respect to the Wasserstein distance \cite{frechetpersistence14,frechettemplates05T,frechet19procrustes}.
Their applications range from mixing textures \cite{texturemix11,HLPR21gotex}, stippling patterns, BRDF \cite{BPPH11networksimplex} or color distributions and shapes \cite{convolutional15SPCetal} over averaging of sensor data \cite{sensors20H} to Bayesian statistics \cite{bayes18}, just to name a few.
We also refer to the surveys \cite{PC19book,stataspects19PZ}.

Unfortunately,
MOT and Wasserstein barycenters are in general hard to compute \cite{AB21nphard}.
Although there are polynomial-time methods for fixed dimension $d$ \cite{AB21fixedd}, 
there is still a need for fast approximations.
Many algorithms restrict the support of the solution to a fixed set and minimize only over the weights.
Such methods include projected subgradient \cite{CD14fast}, 
iterative Bregman projections \cite{BCC15IBP}, (proximal) algorithms 
based on the latter \cite{complexitybarycenters19KTDDGU}, 
an interior point method \cite{GWXY19MAAIPM}, Gauss-Seidel based alternating direction of multipliers \cite{LJDK21linear}, 
the multi-marginal Sinkhorn algorithm and its accelerated variant \cite{lin2019complexity}, 
the debiased Sinkhorn barycenter algorithm \cite{JCG20debiased}, 
methods using the Wasserstein distance on a tree \cite{TSKRY21treesliced}, accelerated Bregman projections \cite{LHXCJ20fastIBP} and a method based on mirror prox and one based on the dual extrapolation scheme \cite{DT21complexitybounds}, among others.

On the other hand, barycenters without such restriction are called free-support barycenters.
As we will see, they can be obtained directly from the solution of the MOT problem~\eqref{eq:mot}, which can be obtained by solving a linear program (LP) \cite{ABM16brute} that scales exponentially in $N$, however.
An exact polynomial-time method for fixed $d$ is given in \cite{AB21fixedd}, several LP based methods in \cite{improvedLP20BP,columngen22BP}, whereas approximative algorithms include another LP-based method \cite{B20LPapprox}, an inexact proximal alternating minimization method \cite{iPAM21QP} and the iterative swapping algorithm \cite{PRV20oncomputation}.
A free-support barycenter method based on the Frank--Wolfe algorithm is given in \cite{LSPC2019frankwolfe}.
Another method computes continuous barycenters using another way of parameterizing them \cite{cont20LGYS}.
Further speedups can be obtained by subsampling the given measures \cite{HMZ20randomized} or dimensionality reduction of the support point clouds \cite{ISS21dimreduction}.

Despite the plethora of literature, many algorithms with low theoretical computational complexity 
or high accuracy solutions are lacking practical applicability, 
since their implementation is rather involved or they are slow in real-world scenarios.
While iterative Bregman projections are a standard benchmark that are hard to beat in simplicity, 
fixed-support methods applied on a grid suffer from the curse of dimensionality in $d$.
Using the sparsity of the solution (see Theorem~\ref{thm:basic}), free-support methods or algorithms approximating MOT directly can overcome this, but a simple yet effective algorithm is still missing.
Thus, we present two approximative algorithms that are straightforward to implement, while our numerical experiments validate their precision and speed.
Moreover, both algorithms enjoy theoretical approximation guarantees, 
have a computational complexity independent of $d$ and produce sparse solutions, such that they are memory-efficient.
In a nutshell, they ``glue together'' an approximate solution of MOT
from $N-1$ standard two-marginal transport plans that can be found using any off-the-shelve sparse optimal transport solver as a black box.

The remainder of this paper is organized as follows.
In Section~\ref{sec:bary-mot}, we elaborate on the relation between the MOT \eqref{eq:mot} and Wasserstein barycenter problem \eqref{eq:bary}, as this will be the foundation for our theoretical analyses.
In Section~\ref{sec:algorithms}, we introduce our two algorithms and show upper and lower relative error bounds for each of them.
Further, we prove that they provide exact solutions in the case $d=1$.
In Section~\ref{sec:numerics}, we validate the accuracy and speed of the algorithms by numerical examples.
Concluding remarks are given in Section~\ref{sec:discussion}.

\section{Relation of Barycenter and MOT Problem} \label{sec:bary-mot}
Let $\mathcal P(\R^d)$ denote the space of probability measures on $\mathbb R^d$.
For some measurable function $T\colon \R^{d_1}\to \R^{d_2}$, the \emph{push-forward measure of $\pi\in \mathcal P (\R^{d_1})$} is defined as $T_\# \pi = \pi\circ T^{-1} \in \mathcal P(\R^{d_2})$.
For two discrete measures 
$\mu = \sum_{j=1}^n \mu_j \delta (x_j)$, 
$\nu = \sum_{k=1}^m \nu_k \delta (y_k)$, 
the \emph{Wasserstein-$2$-distance} 
is defined by
\[
\Wtwo^2(\mu, \nu) 
= 
\min_{\pi\in\Pi(\mu, \nu)} \langle c, \pi \rangle
=
\min_{\pi\in\Pi(\mu, \nu)} \sum_{j=1}^n \sum_{k=1}^m 
\pi_{j,k} \Vert x_{j}-y_{k}\Vert^2_2,
\]
where $c(x,y) \coloneqq \|x-y\|_2^2$ and $\Pi(\mu, \nu)$ denotes the set of probability measures
$$\pi = \sum_{j=1}^n \sum_{k=1}^m \pi_{j,k} \delta(x_j,y_k)$$
with marginals $\mu$ and $\nu$.
The above optimization problem is convex, but can have multiple minimizers $\pi$.
It can be shown that there exists a minimizer supported only on $n+m-1$ points, see \cite[Prop.~3.4]{PC19book} and Theorem \ref{thm:basic} below.
Moreover, there are several algorithms for computing a minimizer that meet this support requirement, 
e.g., the network simplex method \cite{AMO93networkflows}.

In this paper, we consider multi-marginal optimal transport problems for $N$ discrete probability measures $\mu^i\in \mathcal P(\R^d)$ supported at $\supp(\mu^i) = \{ x^i_1, \dots, x^i_{n_i}\}$, $i=1,\ldots,N$, i.e., 
\begin{equation}\label{eq:mu_def}
\mu^i= \sum_{j=1}^{n_i} \mu^i_j \delta(x^i_{j}),\quad i=1, \dots, N.
\end{equation}
To this end, we introduce for pairwise different $i_1, \ldots, i_m \in \{1,\dots, N\}$ the projections
$$
P_{i_1,\ldots,i_m} ((x_1, \dots, x_N) )= (x_{i_1},\ldots,x_{i_m})
$$
and define the set of transport plans
\[
\Pi(\mu^1, \dots, \mu^N)
=  \{ \pi\in \mathcal P((\R^d)^N): (P_i)_{\#} \pi = \mu^i, \, i=1,\dots, N \}.
\]
Given 
$\lambda = (\lambda_1,\ldots,\lambda_N) \in \Delta_{N}$, 
where
$\Delta_{N} \coloneqq \{\lambda \in (0,1)^N: \sum_{j=1}^N \lambda_j =1\}$
denotes the open probability simplex, we want to solve the MOT problem 
\begin{equation}\label{eq:mot}
\min_{\pi \in \Pi(\mu^1, \dots , \mu^N)}\Phi(\pi), 
\qquad 
\Phi(\pi) \coloneqq \langle c_{\mathrm{MOT}}, \pi\rangle,
\qquad
c_{\mathrm{MOT}}(x_1, \dots, x_N) \coloneqq \sum_{s<t}^N \lambda_{s}\lambda_{t}\Vert x_{s} - x_{t}\Vert_2^2.
\end{equation}
Every $\pi \in \Pi(\mu^1, \dots , \mu^N)$ can be written as
$$
\pi = \sum_{j_1=1}^{n_1}\dots \sum_{j_N=1}^{n_N} \pi_{j_1,\ldots,j_N} \delta (x^1_{j_1},\ldots,x^N_{j_N}),
$$
where 
$\pi_{j_1,\ldots,j_N} \in [0,1]$ and $\sum_{j_1, \ldots,j_N} \pi_{j_1,\ldots,j_N} = 1$.
Instead of this notation we will often use a representation that counts only positive summands
of pairwise different point tuples, i.e.,
if we have $M$ of such summands we write
\begin{equation} \label{eq:pi}
\pi = \sum_{j=1}^{M} \pi_{j} \delta (x_{1,j},\ldots,x_{N,j}).
\end{equation}
Here is an example for $N=2$:
\begin{align}
\pi &= \pi_{1,1} \delta(x^1_1, x^2_1) + \pi_{1,2} \delta(x^1_1, x^2_2) + \pi_{2,2} \delta(x^1_2, x^2_2) \\
&= \pi_1 \delta(x_{1,1}, x_{2,1}) + \pi_2 \delta(x_{1,2}, x_{2,2}) + \pi_3 \delta(x_{1,3}, x_{2,3}).
\end{align}
With this notation, we have for the marginals that
$$
(P_{i_1,\ldots,i_m})_{\#} \pi =  \sum_{j=1}^{M} \pi_{j} \delta (x_{i_1,j}, \ldots, x_{i_m,j}).
$$
We will need the following relation between  the MOT problem \eqref{eq:mot} and pairwise Wasserstein distances
\begin{align}
\Phi(\pi) 
&= \sum_{j=1}^M \pi_j c_{\mathrm{MOT}}(x_{1,j}, \ldots,x_{N,j})
= \sum_{j=1}^M \pi_j \sum_{s<t}^N \lambda_{s}\lambda_{t}\Vert x_{s,j} - x_{t,j}\Vert_2^2 \nonumber \\
&\ge \sum_{s<t}^N  \lambda_{s}\lambda_{t} \mathcal W_2^2(\mu^s,\mu^t).\label{eq:2-wass}
\end{align}

We are in particular interested in the relation between the MOT problem with marginals $\mu^i$, $i=1,\ldots,N$
and the Wasserstein-$2$ barycenter problem for the same measures:
\begin{equation}\label{eq:bary}
\min_{\nu \in \mathcal P(\R^d)} \Psi(\nu) , \qquad 
\Psi(\nu) \coloneqq \sumitoNlambdai \Wtwo^2(\mu^i, \nu).
\end{equation}
The relation is given via the \emph{weighted mean operator} $M_\lambda\colon \R^{N, d}\to \R^d$ defined by
\[
M_\lambda(x_1, \dots, x_N) \coloneqq \sumitoNlambdai x_i.
\]
Then we have
\begin{equation} \label{eq:mean}
(M_\lambda)_{\#} \pi = \sum_{j=1}^{M} \pi_{j} \delta (m_j), \qquad 
m_j \coloneqq M_\lambda((x_{1,j},\ldots,x_{N,j}) ) = \sumitoNlambdai x_{i,j}. 
\end{equation}
Note that $\sum_{j=1}^M \pi_j \delta(x_{i,j}, m_j) \in \Pi(\mu^i,(M_\lambda)_{\#} \pi )$.

The following lemma shows a relation between the functions $\Phi$ in the MOT problem and $\Psi$ from the barycenter problem. The proof of the lemma contains some fundamental relations that will be often used in the following.

\begin{lemma}\label{lem:psi_leq_phi}
For the functions $\Psi$ and $\Phi$ given by \eqref{eq:mot} and \eqref{eq:bary}, respectively,
we have
\begin{equation} \label{eq:psi_leq_phi}
\Phi( \pi) \ge  \Psi((M_\lambda)_{\#} \pi)
\end{equation}
for any $\pi$ in \eqref{eq:pi}.
\end{lemma}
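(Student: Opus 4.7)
The plan is to combine two standard observations: a pointwise variance-type identity that rewrites the MOT cost as a weighted sum of squared distances to the barycentric mean, and the fact that each of the resulting per-marginal sums is evaluated on an admissible transport plan between $\mu^i$ and $(M_\lambda)_{\#}\pi$, hence bounded below by $\mathcal W_2^2(\mu^i,(M_\lambda)_{\#}\pi)$.

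First, I would establish (or recall) the elementary identity
\begin{equation*}
\sum_{s<t}^N \lambda_s \lambda_t \|x_s - x_t\|_2^2 \;=\; \sum_{i=1}^N \lambda_i \Bigl\|x_i - \sum_{k=1}^N \lambda_k x_k\Bigr\|_2^2,
\end{equation*}
valid for any $x_1,\ldots,x_N \in \R^d$ and $\lambda \in \Delta_N$. This is a direct expansion: writing both sides in terms of $\|x_i\|_2^2$ and inner products $\langle x_s,x_t\rangle$, using $\sum_i \lambda_i = 1$, and collecting terms, both sides reduce to $\sum_i \lambda_i \|x_i\|_2^2 - \|\sum_i \lambda_i x_i\|_2^2$. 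This is routine, so I would just state it and cite one line of algebra.

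Applied termwise with $x_{i,j}$ in place of $x_i$, so that $\sum_i \lambda_i x_{i,j} = m_j$, this identity turns the MOT cost into
\begin{equation*}
\Phi(\pi) \;=\; \sum_{j=1}^M \pi_j \sum_{s<t}^N \lambda_s \lambda_t \|x_{s,j}-x_{t,j}\|_2^2 \;=\; \sum_{i=1}^N \lambda_i \sum_{j=1}^M \pi_j \|x_{i,j} - m_j\|_2^2,
\end{equation*}
after swapping the two finite sums.

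Finally, I would invoke the observation already recorded in the text: for each fixed $i$, the measure $\sum_{j=1}^M \pi_j \delta(x_{i,j}, m_j)$ lies in $\Pi(\mu^i,(M_\lambda)_{\#}\pi)$. By definition of $\mathcal W_2^2$ as an infimum over couplings,
\begin{equation*}
\sum_{j=1}^M \pi_j \|x_{i,j} - m_j\|_2^2 \;\ge\; \mathcal W_2^2(\mu^i,(M_\lambda)_{\#}\pi).
\end{equation*}
Multiplying by $\lambda_i$ and summing over $i$ yields $\Phi(\pi) \ge \sum_i \lambda_i \mathcal W_2^2(\mu^i,(M_\lambda)_{\#}\pi) = \Psi((M_\lambda)_{\#}\pi)$, as required. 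No real obstacle appears; the only mildly delicate point is confirming that the plan $\sum_j \pi_j \delta(x_{i,j}, m_j)$ is genuinely in $\Pi(\mu^i,(M_\lambda)_{\#}\pi)$, which follows from the marginal computation formula recorded earlier in the text.
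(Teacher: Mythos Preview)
Your proof is correct and follows essentially the same route as the paper: both establish the variance identity $c_{\mathrm{MOT}}(x_1,\ldots,x_N)=\sum_i \lambda_i\|x_i-m\|_2^2$ (the paper derives it line by line, you state it and cite the expansion), apply it termwise to obtain $\Phi(\pi)=\sum_i\lambda_i\sum_j\pi_j\|x_{i,j}-m_j\|_2^2$, and then bound each inner sum below by $\mathcal W_2^2(\mu^i,(M_\lambda)_\#\pi)$ via the admissible coupling $\sum_j\pi_j\delta(x_{i,j},m_j)$.
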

 
\begin{proof}
By incorporating the weighted means \eqref{eq:mean}, the function $\Phi$ can be rewritten as 
\begin{align}
\Phi(\pi)
&= \frac12 \sum_{j=1}^M \pi_j \sum_{s,t=1}^N \lambda_{s}\lambda_{t}\Vert x_{s,j} - x_{t,j}\Vert_2^2\nonumber\\
&= \frac12 \sum_{j=1}^M \pi_j \Big(\sum_{s,t=1}^N \lambda_{s}\lambda_{t}\Vert x_{s,j}\Vert_2^2 
+ \sum_{s,t=1}^N \lambda_{s}\lambda_{t}\Vert x_{t,j}\Vert_2^2 
- 2 \sum_{s,t=1}^N \lambda_{s}\lambda_{t} \langle x_{s,j}, x_{t,j} \rangle\Big)\nonumber\\
&= \sum_{j=1}^M \pi_j \Big( \sum_{s=1}^N \lambda_{s}\Vert x_{s,j}\Vert_2^2 - \|m_j\|^2 \Big)\nonumber\\
&= \sum_{j=1}^M \pi_j \sum_{s=1}^N \lambda_{s} \Vert x_{s,j}- m_j\Vert_2^2. \label{eq:Phi_2}
\end{align}
Then we see that
\begin{align*}
\Phi( \pi)
=
\sumitoNlambdai \sum_{j=1}^M  \pi_j\ \Vert  x_{i, j} - m_j \Vert^2
\ge
\sumitoNlambdai \Wtwo^2(\mu^i, (M_\lambda)_{\#} \pi)
= \Psi((M_\lambda)_{\#} \pi).
\end{align*}
\end{proof}

Next, we aim to show that for an optimal solution $\hat\pi$ of the MOT problem, we do indeed have $\Phi(\hat\pi)=\Psi(\hat\nu)$.
We start by stating the following equality, which we will use frequently from now on.

\begin{lemma}\label{lem:jensen_eq}
	For any points $x_1, \dots, x_N, y\in \R^d$ and $\lambda\in \Delta_N$, we have
	\begin{equation} \label{eq:helper}
	\sum_{i=1}^N \lambda_i \Vert x_i - y\Vert^2_2 = \Vert m-y\Vert_2^2 + \sum_{i=1}^N \lambda_i \Vert x_i - m\Vert^2_2,
	\end{equation}
	where $m \coloneqq \sum_{i=1}^N \lambda_i x_i$.
\end{lemma}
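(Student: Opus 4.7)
The identity is the classical bias–variance (or parallel axis) decomposition for a weighted mean, so the plan is a direct expansion based on the splitting $x_i - y = (x_i - m) + (m - y)$.

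The first step is to write
\[
\|x_i - y\|_2^2 = \|x_i - m\|_2^2 + 2\langle x_i - m,\, m - y\rangle + \|m - y\|_2^2
\]
for each $i$, which is just the cosine rule in $\R^d$. Then I would multiply by $\lambda_i$ and sum over $i$.

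The second step is to handle the cross term. Since $\lambda \in \Delta_N$ and $m = \sum_i \lambda_i x_i$, linearity of the inner product gives
\[
\sum_{i=1}^N \lambda_i \langle x_i - m,\, m - y\rangle = \Big\langle \sum_{i=1}^N \lambda_i (x_i - m),\, m - y\Big\rangle = \langle m - m,\, m - y\rangle = 0,
\]
using $\sum_i \lambda_i = 1$ to get $\sum_i \lambda_i m = m$. The $\|m-y\|_2^2$ term, not depending on $i$, likewise sums to $\|m-y\|_2^2$ because the weights sum to one, yielding exactly \eqref{eq:helper}.

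There is no real obstacle here — the only thing to be careful about is invoking both defining properties of a convex combination (weights summing to one, and $m$ being the weighted average) at the correct moments, namely to collapse the constant term and to kill the cross term, respectively. The computation is self-contained and requires no earlier result from the paper.
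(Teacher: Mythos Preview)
Your proof is correct and is essentially identical to the paper's own argument: the paper also writes $x_i - y = (x_i - m) + (m - y)$ (setting $z \coloneqq m - y$), expands the square, and uses $\sum_i \lambda_i (x_i - m) = 0$ to eliminate the cross term.
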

\begin{proof}
	Setting $z\coloneqq m-y$ we obtain
	\begin{align*}
	\sum_{i=1}^N \lambda_i \Vert x_i - y\Vert_2^2
	&= \sum_{i=1}^N \lambda_i \Vert x_i -m+z\Vert_2^2
	= \sum_{i=1}^N \lambda_i \left(\Vert z\Vert_2^2 + \Vert x_i-m\Vert_2^2 - 2\langle x_i-m, z\rangle \right) \\
	&=  \Vert m-y\Vert_2^2 + \sum_{i=1}^N \lambda_i \Vert x_i-m\Vert_2^2.
	\end{align*}
\end{proof}

Next, we restate some results from \cite{ABM16brute} in our notation.

\begin{theorem}\label{thm:basic}
	If $\hat\pi = \sum_{j=1}^{M} \hat \pi_{j} \delta (\hat x_{1,j},\ldots,\hat x_{N,j})$ 
	is an optimal plan of $\Phi$ in \eqref{eq:mot}, then an optimal solution $\hat\nu$ of the barycenter problem \eqref{eq:bary} is
	\begin{equation}\label{eq:mccann}
	\hat\nu = (M_\lambda)_{\#} \hat\pi = \sum_{j=1}^{M} \hat \pi_{j} \delta(\hat m_j), \qquad
	\hat m_j \coloneqq \sumitoNlambdai \hat x_{i,j}.
	\end{equation}
	Conversely, each optimal barycenter $\hat \nu$ in \eqref{eq:bary} can be obtained 
	in such a way from an optimal MOT plan $\hat \pi$ in \eqref{eq:mot}.
	
	Further, there exists an optimal plan $\hat \pi$
	such that
	\begin{equation}\label{eq:sparsity}
	\#\supp(\hat\pi) \leq \sumitoN n_i - N + 1.
	\end{equation}
\end{theorem}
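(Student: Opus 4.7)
I would handle the three claims in order. The first two rest on combining Lemma~\ref{lem:psi_leq_phi} with a standard gluing construction; the sparsity bound is LP vertex theory.

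\emph{Part 1 and equality of optimal values.} Lemma~\ref{lem:psi_leq_phi} already gives $\inf_\pi \Phi(\pi) \ge \inf_\nu \Psi(\nu)$. For the matching inequality, fix any $\nu \in \mathcal P(\R^d)$ and, for each $i$, a two-marginal optimal plan $\pi^i \in \Pi(\mu^i,\nu)$ realising $\Wtwo^2(\mu^i,\nu)$. Using the conditional distributions of the $\pi^i$ given the common $\nu$-marginal (straightforward in the discrete setting), I glue them into a single measure $\tilde\pi$ on $(\R^d)^{N+1}$ whose $(i,N+1)$-marginal equals $\pi^i$; then $\pi \coloneqq (P_{1,\ldots,N})_\# \tilde\pi$ lies in $\Pi(\mu^1,\dots,\mu^N)$. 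Combining \eqref{eq:Phi_2} with Lemma~\ref{lem:jensen_eq}, taking the last coordinate of $\tilde\pi$ to play the role of $y$, yields
\begin{align*}
\Phi(\pi)
&= \int \sum_{i=1}^N \lambda_i \|x_i - m(x)\|_2^2 \, \mathrm d\tilde\pi(x,y) \\
&= \int \Bigl(\sum_{i=1}^N \lambda_i \|x_i - y\|_2^2 - \|m(x) - y\|_2^2\Bigr) \mathrm d\tilde\pi(x,y) \\
&\le \sum_{i=1}^N \lambda_i \int \|x_i - y\|_2^2 \, \mathrm d\pi^i(x_i,y) = \Psi(\nu),
\end{align*}
so $\inf_\pi \Phi \le \Psi(\nu)$ for every $\nu$, and both infima coincide. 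Applying this to an optimal $\hat\pi$, the chain $\min\Phi = \Phi(\hat\pi) \ge \Psi((M_\lambda)_\# \hat\pi) \ge \min\Psi = \min\Phi$ is tight, proving that $\hat\nu = (M_\lambda)_\# \hat\pi$ is an optimal barycenter.

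\emph{Converse direction.} Given an optimal barycenter $\hat\nu$, I repeat the gluing with $\nu = \hat\nu$ to obtain some $\pi \in \Pi(\mu^1,\dots,\mu^N)$ satisfying $\Phi(\pi) \le \Psi(\hat\nu) = \min\Phi$, hence $\pi$ is an optimal MOT plan. The only slack in the displayed estimate is the dropped nonnegative term $\|m(x)-y\|_2^2$, so this term must vanish $\tilde\pi$-a.s.\ when equality holds. This forces $y = M_\lambda(x)$ $\tilde\pi$-almost surely, yielding $(M_\lambda)_\# \pi = \hat\nu$ and realising the correspondence \eqref{eq:mccann}.

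\emph{Sparsity.} In the coefficients $\pi_{j_1,\dots,j_N}$, problem \eqref{eq:mot} is a bounded LP on $\R_{\ge 0}^{n_1\cdots n_N}$ with $\sum_{i=1}^N n_i$ marginal equality constraints. The $N$ relations $\sum_{j_i}\mu^i_{j_i}=1$ are all equivalent (each merely says the total mass is one), introducing $N-1$ redundancies, so the constraint matrix has rank $\sum_{i=1}^N n_i - N + 1$. Any vertex of the feasible polytope has at most that many nonzero entries, and since the LP attains its minimum at a vertex, there exists an optimal $\hat\pi$ satisfying $\#\supp(\hat\pi) \le \sum_{i=1}^N n_i - N + 1$. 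I anticipate the subtlest point to be the tightness analysis in the converse: identifying which single inequality was relaxed (namely the dropped $\|m(x)-y\|_2^2$) and converting its $\tilde\pi$-a.s.\ vanishing into the push-forward identity $(M_\lambda)_\# \pi = \hat\nu$; the remaining steps are direct applications of Lemmas~\ref{lem:psi_leq_phi} and \ref{lem:jensen_eq} together with the standard vertex argument.
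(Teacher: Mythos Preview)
Your argument is correct and takes a genuinely different route from the paper. The paper does not prove the theorem from scratch: it rewrites $\Phi(\pi)$ as $-\mathbb E\|\bar X\|^2 + \text{const}$ via Lemma~\ref{lem:jensen_eq}, thereby identifying \eqref{eq:mot} with the maximisation problem in \cite[Prop.~1]{ABM16brute}, and then simply invokes \cite[Prop.~1, Thm.~2]{ABM16brute} (noting that those proofs extend verbatim to general $\lambda\in\Delta_N$) for all three claims, including sparsity. Your proof, by contrast, is self-contained: you combine Lemma~\ref{lem:psi_leq_phi} with a gluing of two-marginal optimal plans over a common $\nu$-fibre, use Lemma~\ref{lem:jensen_eq} to control $\Phi$ of the glued plan by $\Psi(\nu)$, and read off the converse from the single slack term $\|m(x)-y\|^2$; the sparsity bound you obtain directly from the LP vertex/rank count. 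What this buys you is independence from the cited reference and a transparent identification of exactly where each inequality is tight; what the paper's approach buys is brevity and a link to the existing literature. One small caveat: in Part~1 you write that the gluing is ``straightforward in the discrete setting'', but the competitor $\nu$ is a priori arbitrary in $\mathcal P(\R^d)$; the gluing lemma still applies on Polish spaces, so this is harmless, but you should phrase it so as not to suggest you are restricting to discrete $\nu$ before that conclusion has been established.
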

\begin{proof}
	Note that $\pi$ in our notation is the law of the random variable $(X_1, \dots, X_N)$ in \cite[Prop.~1]{ABM16brute}.
	Thus, setting
	\begin{align*}
		\mathbb P((X_1, \dots, X_N) = (x_{1, j}, \dots, x_{N, j})) = \pi_j,
		\quad \bar X = \frac 1 N(X_1+ \dots+ X_N),
		\quad m_j = \sumitoNlambdai x_{i, j},
	\end{align*}
	we get by rearranging \eqref{eq:helper} that
	\begin{align*}
		\mathbb E \Vert \bar X\Vert^2
		= \sum_{j=1}^M \pi_j \Vert m_j - 0\Vert^2 
		= \sum_{j=1}^M \pi_j \sum_i\frac 1 N\Big(- \Vert x_{i, j}- m_j \Vert^2 +  \Vert x_{i, j} -0 \Vert^2\Big) 
		= -\Phi(\pi) + \text{const}.
	\end{align*}
	Thus, maximizing $\mathbb E \Vert \bar X\Vert^2$ is equivalent to minimizing $\Phi(\pi)$.
	We conclude by noticing that the proofs in \cite[Prop.~1, Thm.~2]{ABM16brute} are straightforward to generalize to arbitrary $\lambda\in \Delta_N$.
\end{proof}

In particular, the theorem says that for an optimal barycenter $\hat\nu$,
\[
\supp(\hat\nu) \subseteq \Big\{ \sumitoNlambdai x^i : x^i\in \supp(\mu^i), \, i=1, \dots, N \Big\},
\]
i.e., optimal barycenters are again discrete measures supported in the convex hull of the supports of the $\mu^i$, $i=1,\ldots,N$.

Although $\Phi(\hat\pi)=\Psi(\hat\nu)$ could be derived from the proofs in \cite{ABM16brute} as well, we show this differently alongside some other relations that are, to the best of our knowledge, not yet established and characterize the relation between $\hat\pi$ and $\hat\nu$ more explicitly.

The next proposition shows that the weighted means in \eqref{eq:mean} are pairwise distinct for optimal plans of the MOT problem.

\begin{proposition}\label{prop:means_unique}
	If $\hat\pi = \sum_{j=1}^{M} \hat \pi_{j} \delta (\hat x_{1,j},\ldots,\hat x_{N,j})$ 
	is an optimal plan in \eqref{eq:mot}, then 
	$\hat m_j \not = \hat m_k$ for $j \not = k$, where
	$\hat m_j = M_\lambda (\hat x_{1,j},\ldots,\hat x_{N,j})$.
\end{proposition}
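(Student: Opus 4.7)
The plan is to argue by contradiction via a coordinate-swap perturbation that preserves the marginals but strictly decreases $\Phi$. Suppose $\hat m_j = \hat m_k =: m$ for some $j \neq k$. Since the representation \eqref{eq:pi} lists pairwise distinct atoms, there exists an index $i_0 \in \{1,\ldots,N\}$ with $a := \hat x_{i_0,j} \neq \hat x_{i_0,k} =: b$. I would then pick $t \in (0, \min(\hat\pi_j, \hat\pi_k)]$ and define a competitor $\tilde\pi$ by subtracting mass $t$ from each of the atoms at $(\hat x_{1,j},\ldots,\hat x_{N,j})$ and $(\hat x_{1,k},\ldots,\hat x_{N,k})$, and adding mass $t$ at each of the two ``swapped'' atoms obtained by exchanging only the $i_0$-th coordinate between $j$ and $k$.

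The main obstacle is verifying that $\tilde\pi \in \Pi(\mu^1, \dots, \mu^N)$, which I would check coordinate by coordinate. For every $i \neq i_0$, both swapped atoms still carry $\hat x_{i,j}$ (resp.\ $\hat x_{i,k}$) in the $i$-th slot, so the $i$-th marginal is unchanged; for $i = i_0$, the swap transports mass $t$ from $a$ to $b$ and simultaneously $t$ from $b$ to $a$, again leaving the $i_0$-th marginal invariant. (If one of the swapped atoms happens to coincide with an existing atom of $\hat\pi$, this is harmless, since $\tilde\pi$ is being viewed as a measure.)

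Once feasibility is secured, the cost comparison is a short computation using the pointwise identity $c_{\mathrm{MOT}}(x_1,\dots,x_N) = \sum_i \lambda_i \|x_i\|_2^2 - \|\sum_i \lambda_i x_i\|_2^2$ obtained in the proof of Lemma~\ref{lem:psi_leq_phi}. The $\sum_i \lambda_i \|x_i\|_2^2$ contributions of the four affected atoms cancel, because only the $i_0$-th coordinate is modified and $\lambda_{i_0}(\|a\|_2^2 + \|b\|_2^2)$ appears symmetrically in the old and the new plan. The weighted means of the swapped atoms are $m \pm \lambda_{i_0}(b-a)$, whereas the means of atoms $j, k$ are both $m$, so the parallelogram identity yields
\[
\Phi(\tilde\pi) - \Phi(\hat\pi) = -t\bigl(\|m + \lambda_{i_0}(b-a)\|_2^2 + \|m - \lambda_{i_0}(b-a)\|_2^2 - 2\|m\|_2^2\bigr) = -2t\lambda_{i_0}^2 \|b-a\|_2^2 < 0,
\]
where strict negativity uses $\lambda_{i_0} > 0$ (since $\lambda \in \Delta_N$) and $a \neq b$. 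This contradicts the optimality of $\hat\pi$, completing the argument.
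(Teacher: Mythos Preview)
Your proof is correct and follows essentially the same coordinate-swap argument as the paper: both pick a coordinate where the two atoms differ, transfer mass $t=h=\min(\hat\pi_j,\hat\pi_k)$ to the two swapped tuples, verify feasibility, and compute the strict cost decrease $2t\lambda_{i_0}^2\|a-b\|_2^2$. The only cosmetic difference is that the paper carries out the cost comparison via the representation \eqref{eq:Phi_2} together with Lemma~\ref{lem:jensen_eq}, whereas you use the equivalent identity $c_{\mathrm{MOT}}=\sum_i\lambda_i\|x_i\|_2^2-\|M_\lambda(x)\|_2^2$ and the parallelogram law directly.
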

\begin{proof}
	Assume that in contrary
	$\hat m_{j} = \hat m_{k} = m$ for some $j\neq k$.
	Without loss of generality let $\hat x_{N, j} \neq \hat x_{N, k}$.
	Then we define the tuples 
	\begin{align*}
	(x_{1, j}', \dots, x_{N-1, j}', x_{N, j}') &= (\hat x_{1, j}, \dots, \hat x_{N-1, j}, \hat x_{N, k}) \\
	(x_{1, k}', \dots, x_{N-1, k}', x_{N, k}') &= (\hat x_{1, k}, \dots, \hat x_{N-1, k}, \hat x_{N, j})
	\end{align*}
	and consider for $h \coloneqq\min(\hat \pi_{j}, \hat \pi_{k})>0$ the plan
	\begin{align*}
	\pi' \coloneqq \hat\pi + h \left(
	\delta( x_{1, j}', \dots,  x_{N, j}')
	+ \delta( x_{1, k}', \dots,  x_{N, k}') 
	- \delta(\hat x_{1, j}, \dots, \hat x_{N, j})
	- \delta(\hat x_{1, k}, \dots, \hat x_{N, k}) \right)	.
	\end{align*}
	By construction, we verify that $\pi'\in \Pi(\mu^1, \dots, \mu^N)$.
	Further, we conclude as in \eqref{eq:Phi_2} that 
	\begin{align*}
	\frac 1 h (\Phi(\hat\pi) -\Phi(\pi')) 
	&= \sum_{s<t}^N \lambda_{s}\lambda_{t} \left(
	\Vert \hat x_{s, j} - \hat x_{t, j} \Vert^2
	+ \Vert \hat x_{s, k} - \hat x_{t, k} \Vert^2
	- \Vert  x_{s, j}' -  x_{t, j}' \Vert^2
	- \Vert  x_{s, k}' -  x_{t, k}' \Vert^2
	\right) \\
	&= \sumitoNlambdai \left(
	\Vert \hat x_{i, j} - m\Vert^2
	+ \Vert \hat x_{i, k} - m\Vert^2
	- \Vert x_{i, j}' - m_{j}'\Vert^2
	- \Vert x_{i, k}' - m_{k}'\Vert^2
	\right),
	\end{align*}
	where 
	$m_{l}' \coloneqq M_\lambda ( x_{1, l}', \ldots, x_{N, l}')$, $l=j, k$. 
	Finally, we obtain by definition of the tuples and Lemma \ref{lem:jensen_eq} that
	\begin{align*}
	\frac 1 h (\Phi(\hat\pi) -\Phi(\pi')) 
	&= \sumitoNlambdai \left(
	\Vert  x_{i, j}' - m\Vert^2
	+ \Vert  x_{i, k}' - m\Vert^2
	- \Vert x_{i, j}' - m_{j}'\Vert^2
	- \Vert x_{i, k}' - m_{k}'\Vert^2
	\right) \\
	&= 
	\Vert m - m_{j}'\Vert^2 + \Vert m - m_{k}'\Vert^2
	= 2\lambda_N^2\Vert \hat x_{N, j} - \hat x_{N, k}\Vert^2 > 0,
	\end{align*}
	which contradicts the optimality of $\hat\pi$.	
\end{proof}

Next, we show how the optimal transport from $\hat\nu = \push \hat \pi$ 
to $\mu^i$ is determined by $\hat\pi$.

\begin{proposition}\label{prop:ot_bary_to_mui}
	Let $\hat \pi$ be an optimal plan in \eqref{eq:mot} and $\hat\nu = \push\hat\pi$.
	Then it holds for all $i=1,\ldots,N$ that
	\begin{equation} \label{eq:non-splitting}
	\sum_{j =1}^M \hat \pi_j \delta(\hat m_j, \hat x_{i, j})
	\in \argmin_{\pi\in \Pi(\hat\nu, \mu^i)} \langle c, \pi \rangle.
	\end{equation}
\end{proposition}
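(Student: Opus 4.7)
The plan is to prove this by contradiction: assuming there is a strictly cheaper plan $\pi' \in \Pi(\hat\nu, \mu^i)$ than $\pi^{(i)} \coloneqq \sum_{j=1}^M \hat\pi_j \delta(\hat m_j, \hat x_{i,j})$, I will splice $\pi'$ into $\hat\pi$ along the $i$-th coordinate to produce a new MOT plan $\pi_{\text{new}} \in \Pi(\mu^1, \dots, \mu^N)$ with $\Phi(\pi_{\text{new}}) < \Phi(\hat\pi)$, contradicting optimality of $\hat\pi$. First I would note that $\pi^{(i)}$ is admissible: since $(M_\lambda)_{\#}\hat\pi = \hat\nu$ and $(P_i)_{\#}\hat\pi = \mu^i$, we have $\pi^{(i)} \in \Pi(\hat\nu, \mu^i)$ by construction.

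\textbf{Construction of $\pi_{\text{new}}$.} The decisive ingredient is Proposition~\ref{prop:means_unique}: the weighted means $\hat m_j$ are pairwise distinct. Consequently, any candidate $\pi' \in \Pi(\hat\nu, \mu^i)$ decomposes uniquely by its first marginal as $\pi' = \sum_{j=1}^M \sum_l \alpha_{j,l}\, \delta(\hat m_j, y_{j,l})$ with $\sum_l \alpha_{j,l} = \hat\pi_j$. I then replace, in $\hat\pi$, each atom $\delta(\hat x_{1,j}, \dots, \hat x_{N,j})$ of weight $\hat\pi_j$ by the atoms $\delta(\hat x_{1,j}, \dots, \hat x_{i-1,j}, y_{j,l}, \hat x_{i+1,j}, \dots, \hat x_{N,j})$ of weights $\alpha_{j,l}$, obtaining $\pi_{\text{new}}$. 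Checking marginals is routine: for $s\neq i$, the $s$-th marginal is $\sum_j (\sum_l \alpha_{j,l})\, \delta(\hat x_{s,j}) = \mu^s$, while for the $i$-th coordinate the marginal is $(P_2)_{\#}\pi' = \mu^i$.

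\textbf{Cost comparison.} Using the identity $\Phi(\pi) = \sum_j \pi_j \sum_s \lambda_s \|x_{s,j} - m_j\|^2$ from~\eqref{eq:Phi_2}, the new weighted mean of atom $(j,l)$ is $m'_{j,l} = \hat m_j + \lambda_i(y_{j,l} - \hat x_{i,j})$, so $\|m'_{j,l} - \hat m_j\|^2 = \lambda_i^2 \|y_{j,l} - \hat x_{i,j}\|^2$. Applying Lemma~\ref{lem:jensen_eq} with reference point $y = \hat m_j$ to rewrite $\sum_s \lambda_s \|x^{\text{new}}_{s,j,l} - m'_{j,l}\|^2$ in terms of $\sum_s \lambda_s \|x^{\text{new}}_{s,j,l} - \hat m_j\|^2$, the terms for $s \neq i$ are unchanged between $\Phi(\hat\pi)$ and $\Phi(\pi_{\text{new}})$, and a short bookkeeping gives
\begin{equation*}
\Phi(\hat\pi) - \Phi(\pi_{\text{new}})
= \lambda_i \bigl(\langle c, \pi^{(i)}\rangle - \langle c, \pi'\rangle\bigr)
+ \lambda_i^2 \sum_{j,l} \alpha_{j,l}\, \|y_{j,l} - \hat x_{i,j}\|_2^2.
\end{equation*}
The second term is nonnegative and the first is strictly positive by assumption, giving the desired contradiction.

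\textbf{Where the difficulty lies.} The conceptual obstacle is really the correct ``gluing'' along the $i$-th coordinate, which crucially requires Proposition~\ref{prop:means_unique} so that the decomposition of $\pi'$ by its first marginal is unambiguous; without it, mass at a single $\hat m_j$ could come from multiple tuples of $\hat\pi$, and the splicing would not define a valid multi-marginal plan. The remaining work is a direct computation driven by Lemma~\ref{lem:jensen_eq}, whose utility is that it converts the change in $\Phi$ into a clean linear deviation in the $i$-th slot.
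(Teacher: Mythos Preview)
Your proof is correct and follows essentially the same route as the paper: a contradiction argument that splices a cheaper two-marginal plan into $\hat\pi$ along the $i$-th coordinate (using Proposition~\ref{prop:means_unique} to disambiguate fibers) and then compares costs via Lemma~\ref{lem:jensen_eq} and \eqref{eq:Phi_2}. The only cosmetic difference is that you track the exact identity $\Phi(\hat\pi)-\Phi(\pi_{\text{new}}) = \lambda_i(\langle c,\pi^{(i)}\rangle - \langle c,\pi'\rangle) + \lambda_i^2\sum_{j,l}\alpha_{j,l}\|y_{j,l}-\hat x_{i,j}\|^2$, whereas the paper simply drops the nonnegative $\|m'_{j,l}-\hat m_j\|^2$ term as an inequality; both reach the same contradiction.
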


\begin{proof}
	Without loss of generality we consider the case $i=N$. Assume that \eqref{eq:non-splitting} is not true.
	Since $\hat \nu$ is supported on $\hat m_j$, $j=1,\ldots,M$,
	we have that 
	$
	\pi^N \in \argmin_{\pi \in \Pi(\hat\nu, \mu^N)} \langle c, \pi\rangle 
	$ 
	can be written by counting only positive summands in the form
	$$
	\pi^N 	 = \sum_{j=1}^M \sum_{l_j} \pi_{j,l_j}^N \delta(\hat m_j, x_{N,j,l_j})
	$$
	where $x_{N,j,l_j} \in \mathrm{supp} (\mu^N)$ 
	and
	$\sum_{l_j} \pi_{j,l_j}^N = \hat \pi_j$. Note that by Proposition \ref{prop:means_unique}
	the $\hat m_j$ are pairwise different.
	Then, by assumption,
	\begin{equation} \label{eq:mot_subopt_assump}
	\sum_{j =1}^M \hat \pi_j \Vert \hat m_j -\hat x_{N, j}\Vert^2
	> \sum_{j=1}^M \sum_{l_j} \pi_{j,l_j}^N \Vert \hat m_j - x_{N,j,l_j}\Vert^2.
	\end{equation}
	Now we consider
	\[
	\pi' \coloneqq \sum_{j=1}^M \sum_{l_j} \pi_{j,l_j}^N \delta(\hat x_{1, j}, \dots, \hat x_{N-1, j}, x_{N, j, l_j})
	\]
	which is clearly in $\Pi(\mu^1,\ldots,\mu^N)$.
	Setting for $i=1,\ldots,N-1$ and all $l_j$,
	$$
	x_{i,j,l_j} \coloneqq \hat x_{i,j}  \quad \mathrm{and} \quad m_{j,l_j} = \sum_{i=1}^N \lambda_i x_{i,j,l_j}
	$$
	we see as in \eqref{eq:Phi_2} that
	\begin{align*}
	\Phi(\pi') &= \sum_{j=1}^M \sum_{l_j} \pi_{j,l_j}^N \sum_{i=1}^N \lambda_i \|x_{i,j,l_j} - m_{j,l_j} \|^2\\
	&\le 
	\sum_{j=1}^M \sum_{l_j} \pi_{j,l_j}^N 
	\Big(\|\hat m_j - m_{j,l_j}\|^2
	+ 
	\sum_{i=1}^N \lambda_i \|x_{i,j,l_j} - m_{j,l_j} \|^2
	\Big)
	\end{align*}
	and further by Lemma \ref{lem:jensen_eq} that
	\begin{align*}
	\Phi(\pi') 
	&\le 
	\sum_{j=1}^M  \sum_{l_j} \pi_{j,l_j}^N 
	\sum_{i=1}^N \lambda_i \|x_{i,j,l_j} - \hat m_{j} \|^2
	\\
	&
	=
	\sum_{j=1}^M \hat \pi_j \sum_{i=1}^{N-1} \lambda_i \|\hat x_{i,j} - \hat m_{j} \|^2
	+ 
	\sum_{j=1}^M  \sum_{l_j} \pi_{j,l_j}^N\lambda_N \|x_{N,j,l_j} - \hat m_{j} \|^2.
	\end{align*}
	Now \eqref{eq:mot_subopt_assump} implies
	\begin{align*}
	\Phi(\pi') 
	&< 
	\sum_{j=1}^M \hat \pi_j \sumitoNlambdai \Vert \hat x_{i, j} - \hat m_j\Vert^2
	= 
	\Phi(\hat \pi) 
	\end{align*}
	which contradicts the optimality of $\hat\pi$.
\end{proof}

Finally, the previous considerations lead to the desired result.
\begin{proposition}
	Let $\hat\pi$ be an optimal solution of the MOT problem in \eqref{eq:mot} and $\hat\nu$ an optimal barycenter, i.e., a minimizer in \eqref{eq:bary}. Then it holds
	\[
	\Phi(\hat\pi) = \Psi(\hat\nu).
	\]
\end{proposition}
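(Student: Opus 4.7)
The plan is to chain together the two preceding propositions: Theorem~\ref{thm:basic} tells us we may replace $\hat\nu$ by $(M_\lambda)_\#\hat\pi$ without changing $\Psi$, and Proposition~\ref{prop:ot_bary_to_mui} then lets us evaluate each individual Wasserstein term $\mathcal W_2^2(\hat\nu,\mu^i)$ explicitly in terms of the MOT plan $\hat\pi$. Together these should give the matching upper bound to the inequality $\Phi(\hat\pi)\ge\Psi((M_\lambda)_\#\hat\pi)$ provided by Lemma~\ref{lem:psi_leq_phi}, turning it into an equality.

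Concretely, I would proceed as follows. First, set $\tilde\nu \coloneqq (M_\lambda)_\#\hat\pi = \sum_j \hat\pi_j \delta(\hat m_j)$. By Theorem~\ref{thm:basic}, $\tilde\nu$ is itself an optimal barycenter, so $\Psi(\hat\nu)=\Psi(\tilde\nu)$ and it suffices to show $\Phi(\hat\pi)=\Psi(\tilde\nu)$. Next, fix $i \in \{1,\dots,N\}$. Proposition~\ref{prop:ot_bary_to_mui} states that $\sum_{j=1}^M \hat\pi_j\,\delta(\hat m_j,\hat x_{i,j})$ is an optimal transport plan between $\tilde\nu$ and $\mu^i$, so
\[
\mathcal W_2^2(\tilde\nu,\mu^i) \;=\; \sum_{j=1}^M \hat\pi_j\,\|\hat m_j - \hat x_{i,j}\|_2^2.
\]
Summing against $\lambda_i$ and swapping the two finite sums then yields
\[
\Psi(\tilde\nu) \;=\; \sum_{i=1}^N \lambda_i \sum_{j=1}^M \hat\pi_j\,\|\hat x_{i,j} - \hat m_j\|_2^2 \;=\; \sum_{j=1}^M \hat\pi_j \sum_{i=1}^N \lambda_i\,\|\hat x_{i,j} - \hat m_j\|_2^2,
\]
which is precisely the expression for $\Phi(\hat\pi)$ obtained in \eqref{eq:Phi_2} during the proof of Lemma~\ref{lem:psi_leq_phi}. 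Thus $\Phi(\hat\pi)=\Psi(\tilde\nu)=\Psi(\hat\nu)$.

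There is essentially no obstacle beyond carefully invoking the two prior results; the main subtlety to double-check is that Proposition~\ref{prop:ot_bary_to_mui} legitimately identifies the optimal cost $\mathcal W_2^2(\hat\nu,\mu^i)$ and not merely an upper bound, and that Proposition~\ref{prop:means_unique} (used implicitly inside Proposition~\ref{prop:ot_bary_to_mui}) guarantees the $\hat m_j$ are distinct so that $\tilde\nu$ has mass exactly $\hat\pi_j$ at $\hat m_j$. Both are already established, so the proof reduces to the two-line computation above.
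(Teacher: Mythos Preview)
Your proof is correct and follows essentially the same route as the paper: invoke Proposition~\ref{prop:ot_bary_to_mui} together with the identity \eqref{eq:Phi_2} to obtain $\Phi(\hat\pi)=\Psi\big((M_\lambda)_\#\hat\pi\big)$, and then use Theorem~\ref{thm:basic} to identify $\Psi\big((M_\lambda)_\#\hat\pi\big)$ with $\Psi(\hat\nu)$. The paper compresses this into two lines, whereas you spell out the intermediate computation, but the logic is identical.
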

\begin{proof}
	As a direct consequence of \eqref{eq:Phi_2} and Proposition~\ref{prop:ot_bary_to_mui}, we get that $\Phi(\hat\pi) = \Psi(\push \pi)$.
	Since $\push \pi$ is an optimal solution of \eqref{eq:bary} by Theorem~\ref{thm:basic}, it holds $\Psi(\push \pi)=\Psi(\hat\nu)$.
\end{proof}

Notably, the Propositions~\ref{prop:ot_bary_to_mui} and \ref{prop:means_unique} also show that the optimal transport from $\hat\nu$ to any $\mu^i$ is non-mass-splitting.

Next, we show a relation between the cost $\Psi(\tilde\nu)$ of any barycenter $\tilde\nu$ and the cost $\Psi(\hat\nu)$ of the optimal barycenter $\hat\nu$ and their Wasserstein distance $\Wtwo^2(\hat\nu, \tilde\nu)$.
\begin{proposition}\label{prop:cost_dist_estimate} For any discrete $\tilde\nu\in \mathcal P(\R^d)$, it holds that
	\begin{equation}\label{eq:cost_dist_estimate}
	\Psi(\tilde\nu)
	\leq \Psi(\hat\nu) + \Wtwo^2(\tilde\nu, \hat\nu).
	\end{equation}
\end{proposition}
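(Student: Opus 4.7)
The plan is to construct, for each marginal $\mu^i$, a transport plan from $\mu^i$ to $\tilde\nu$ by gluing the optimal coupling between $\mu^i$ and $\hat\nu$ (which we understand explicitly from Proposition~\ref{prop:ot_bary_to_mui}) with an optimal coupling between $\hat\nu$ and $\tilde\nu$. Bounding $\Wtwo^2(\mu^i,\tilde\nu)$ by the cost of this glued plan and summing against $\lambda_i$ will produce an upper bound for $\Psi(\tilde\nu)$; Lemma~\ref{lem:jensen_eq} will then split the result into exactly the two terms appearing in \eqref{eq:cost_dist_estimate}.

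In more detail, I would pick an optimal MOT plan $\hat\pi = \sum_{j=1}^M \hat\pi_j \delta(\hat x_{1,j},\ldots,\hat x_{N,j})$ so that $\hat\nu = \push \hat\pi = \sum_j \hat\pi_j \delta(\hat m_j)$ with the $\hat m_j$ pairwise distinct by Proposition~\ref{prop:means_unique}. Write $\tilde\nu = \sum_k \tilde\nu_k \delta(\tilde y_k)$ and let $\sigma = \sum_{j,k} \sigma_{j,k}\delta(\hat m_j,\tilde y_k)$ be an optimal coupling between $\hat\nu$ and $\tilde\nu$, so in particular $\sum_k \sigma_{j,k} = \hat\pi_j$. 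Since by Proposition~\ref{prop:ot_bary_to_mui} the plan $\sum_j \hat\pi_j \delta(\hat x_{i,j},\hat m_j)$ sends $\mu^i$ to $\hat\nu$ without mass splitting (i.e.\ deterministically on the $\hat m_j$ side), gluing it with $\sigma$ produces the valid coupling
\[
\gamma^i \coloneqq \sum_{j,k}\sigma_{j,k}\,\delta(\hat x_{i,j},\tilde y_k)\in \Pi(\mu^i,\tilde\nu).
\]

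From $\Wtwo^2(\mu^i,\tilde\nu)\le \langle c,\gamma^i\rangle$ I would multiply by $\lambda_i$, sum over $i$, and interchange the order of summation to obtain
\[
\Psi(\tilde\nu) \le \sum_{j,k}\sigma_{j,k} \sum_{i=1}^N \lambda_i \Vert \hat x_{i,j}-\tilde y_k\Vert_2^2.
\]
Applying Lemma~\ref{lem:jensen_eq} to the inner sum with $m=\hat m_j$ and $y=\tilde y_k$ splits the right-hand side into $\sum_{j,k}\sigma_{j,k}\Vert \hat m_j-\tilde y_k\Vert_2^2$ plus $\sum_{j,k}\sigma_{j,k}\sum_i \lambda_i \Vert \hat x_{i,j}-\hat m_j\Vert_2^2$. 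The first piece is exactly $\Wtwo^2(\hat\nu,\tilde\nu)$ by optimality of $\sigma$. The second piece, after using $\sum_k\sigma_{j,k}=\hat\pi_j$, becomes $\sum_j \hat\pi_j\sum_i\lambda_i\Vert \hat x_{i,j}-\hat m_j\Vert_2^2 = \Phi(\hat\pi)$ by \eqref{eq:Phi_2}, which equals $\Psi(\hat\nu)$ by the previous proposition.

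The only mildly delicate step is justifying the gluing: it relies on the fact that the $\hat m_j$ are pairwise distinct (Proposition~\ref{prop:means_unique}), so that $\sigma$ disintegrates unambiguously over the atoms of $\hat\nu$, and on the non-mass-splitting of the optimal transport from $\hat\nu$ to $\mu^i$ (Proposition~\ref{prop:ot_bary_to_mui}), which is what makes the glued plan have such a clean form. Both facts are already established in the excerpt, so the proof should be short and essentially bookkeeping once the construction of $\gamma^i$ is set up.
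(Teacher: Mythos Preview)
Your proposal is correct and follows essentially the same route as the paper: construct for each $i$ a (not necessarily optimal) coupling in $\Pi(\mu^i,\tilde\nu)$ by gluing the coupling $\sum_j \hat\pi_j\delta(\hat x_{i,j},\hat m_j)$ with an optimal $\sigma\in\Pi(\hat\nu,\tilde\nu)$, bound $\Psi(\tilde\nu)$ by the resulting cost, and split via Lemma~\ref{lem:jensen_eq} into $\Wtwo^2(\hat\nu,\tilde\nu)+\Phi(\hat\pi)=\Wtwo^2(\hat\nu,\tilde\nu)+\Psi(\hat\nu)$. One small remark: the paper verifies $\gamma^i\in\Pi(\mu^i,\tilde\nu)$ directly from $\sum_k\sigma_{j,k}=\hat\pi_j$ and $(P_i)_\#\hat\pi=\mu^i$, so neither Proposition~\ref{prop:means_unique} nor Proposition~\ref{prop:ot_bary_to_mui} is actually needed for this step---the gluing works at the level of indices $j$ regardless of whether the $\hat m_j$ are distinct or whether the induced two-marginal plan is optimal.
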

\begin{proof}
	Since $\hat\nu = \push \hat\pi$, as above we can write
	\[
	\hat\pi= \sum_{k} \hat\pi_k\delta(\hat x_{1, k}, \dots, \hat x_{N, k}) \quad \text{and}\quad
	\hat\nu = \sum_{k} \hat\pi_k \delta(\hat m_k) \quad \text{with} \quad
	\hat m_k = \sumitoNlambdai \hat x_{i, k}.
	\]
	We also write $\tilde\nu= \sum_{j}\tilde\nu_j \tilde m_j$.
	Since $(P_i)_\#\hat\pi = \mu^i$ for all $i=1,\dots, N$, $\mu^i$ can be written as $\mu^i = \sum_k \hat\pi_k \delta(\hat x_{i, k})$.
	Let 
	\[
	\hat\pi^i
	\coloneqq \sum_k \hat\pi_k\delta(\hat m_k, \hat x_{i, k})
	\]
	be the coupling between $\hat\nu$ and $\mu^i$ given by $\hat\pi$ for all $i=1, \dots, N$.
	Next, take some $\bar\pi \in \argmin_{\pi\in \Pi(\tilde\nu, \hat\nu)} \langle c, \pi\rangle$, which can be written as
	\[
	\bar\pi = \sum_{j, k} \bar\pi_{j, k}\delta(\tilde m_j, \hat m_k)
	\]
	such that $\sum_j  \bar\pi_{j, k}=\hat\pi_k$.
	Then we define a -- not necessarily optimal -- transport plan $\tilde\pi^i\in\Pi(\tilde\nu, \mu^i)$ for every $i=1, \dots, N$ as follows, which will produce the right hand side of \eqref{eq:cost_dist_estimate}:
	\[
	\tilde\pi^i \coloneqq \sum_{j, k} \bar\pi_{j, k} \delta(\tilde m_j, \hat x_{i, k}).
	\]
	It is easy to see that $\tilde\pi^i\in \Pi(\tilde\nu, \mu^i)$.
	Thus,
	\begin{align*}
	\Psi(\tilde\nu)
	= \sumitoNlambdai \Wtwo^2(\tilde\nu, \mu^i)
	\leq \sumitoNlambdai \langle c, \tilde\pi^i \rangle 
	= \sum_{j, k} \bar\pi_{j, k} \sumitoNlambdai  \Vert \tilde m_j - \hat x_{i, k}\Vert^2
	\end{align*}
	and hence, by Lemma~\ref{lem:jensen_eq} and \eqref{eq:Phi_2},
	\begin{align*}
	\Psi(\tilde\nu)
	&\leq \sum_{j, k} \bar\pi_{j, k} \sumitoNlambdai (\Vert \tilde m_j - \hat m_k\Vert^2 + \Vert \hat m_k - \hat x_{i, k}\Vert^2) \\
	&= \sum_{j, k} \bar\pi_{j, k} \Vert \tilde m_j - \hat m_k\Vert^2 + \sum_{k} \hat\pi_k \sumitoNlambdai \Vert \hat m_k - \hat x_{i, k}\Vert^2
	= \Wtwo^2(\tilde\nu, \hat\nu) + \Phi(\hat\pi).
	\end{align*}
	By $\Phi(\hat\pi) = \Psi(\hat\nu)$, the statement follows.
\end{proof}

We conclude this section with an example, where
\begin{equation*}
\Phi(\tilde\pi) > \Phi(\hat\pi) + \Wtwo^2(\tilde\nu, \hat\nu),
\end{equation*}
i.e., Proposition~\ref{prop:cost_dist_estimate} does not hold for $\Phi$ instead of $\Psi$.
\begin{example} Set
	\begin{align*}
	x_{1, 1} = 0, \quad
	x_{1, 2} = 3, \quad
	x_{2, 1} = 1, \quad
	x_{2, 2} = 2, \quad
	x_{3, 1} = 1, \quad
	x_{3, 2} = 2.
	\end{align*}
	Set $\lambda \equiv \frac 1 3$ and $\mu^i= \frac 1 2 (\delta(x_{i, 1}) + \delta(x_{i, 2}))$ for $i=1, 2, 3$.
	Further,
	\begin{align*}
	\hat\pi &= \frac 1 2 (\delta(x_{1, 1},  x_{2, 1},  x_{3, 1}) + \delta(x_{1, 2},  x_{2, 2},  x_{3, 2})), \\
	\tilde\pi &= \frac 1 2 (\delta(x_{1, 1},  x_{2, 2},  x_{3, 2}) + \delta(x_{1, 2},  x_{2, 1},  x_{3, 1})).
	\end{align*}
	Then we have
	\begin{align*}
	\hat\nu = \frac 1 2 \Big(\delta\Big(\frac 2 3\Big) + \delta\Big(\frac 7 3 \Big)\Big), \quad
	\tilde\nu = \frac 1 2 \Big(\delta\Big(\frac 4 3\Big) + \delta\Big(\frac 5 3 \Big)\Big).
	\end{align*}
	Further,
	\begin{align*}
	\Phi(\hat\pi) = \frac 2 9 \cdot 1^2 = \frac 2 9, \quad 
	\Phi(\tilde\pi) = \frac 2 9 \cdot 2^2 = \frac 8 9, \quad 
	\Wtwo^2(\tilde\nu, \hat\nu) = \Big(\frac 2 3\Big)^2 = \frac 4 9.
	\end{align*}
	Hence
	\[
	\Phi(\tilde\pi) = \frac 8 9 > \frac 6 9 = \frac 2 9 + \frac 4 9 = \Phi(\hat\pi) + \Wtwo^2(\tilde\nu, \hat\nu).
	\]
	Nevertheless, it holds
	\[
	\Psi(\tilde\nu) = \frac 1 3 \Big( \Big( \frac 4 3 \Big)^2 + 2\cdot \Big( \frac 1 3 \Big)^2 \Big) = \frac {18} {27} = \frac 6 9 = \Phi(\hat\pi) + \Wtwo^2(\tilde\nu, \hat\nu) = \Psi(\hat\nu) + \Wtwo^2(\tilde\nu, \hat\nu)
	\]
	in alignment with Proposition~\ref{prop:cost_dist_estimate}.
	This example highlights that the optimal two-marginal transport plans from $\tilde\nu$ to the $\mu^i$ can not be directly read off of the support of $\tilde\pi$, which is in contrast to the optimal plan $\hat\pi$, see Proposition~\ref{prop:ot_bary_to_mui}.
	
	We note in passing that the same example also shows that different multi-marginal plans can have the same barycenter:
	Define
	\begin{align*}
	\pi^1 &= \frac 1 2 (\delta(x_{1, 1}, x_{2, 1}, x_{3, 2}) + \delta(x_{1, 2}, x_{2, 2}, x_{3, 1})), \\
	\pi^2 &= \frac 1 2 (\delta(x_{1, 1},  x_{2, 2},  x_{3, 1}) + \delta(x_{1, 2},  x_{2, 1},  x_{3, 2})).
	\end{align*}
	Then $(M_\lambda)_\# \pi^1 = (M_\lambda)_\# \pi^2 = \frac 1 2 (\delta(1) + \delta(2))$.
	
\end{example}

\section{Algorithms for MOT Approximation}\label{sec:algorithms}
In this section, we propose two algorithms for computing approximate MOT plans.
We will see that both algorithms require mainly the computation of $N-1$ two-marginal Wasserstein plans.

In the following, let $N$ discrete measures $\mu^i\in \mathcal P(\R^d)$ of the form \eqref{eq:mu_def} 
and $\lambda \in \Delta_N$ be given.
Then, starting with $\tilde\pi^{(1)} \coloneqq \mu^1$, the algorithms compute for $r=2,\ldots,N$ iteratively
\begin{equation}\label{eq:ref_alg}
{\tilde{\pi}}^{(r)} \in \argmin_{\pi \in \Pi(\tilde\pi^{(r-1)}, \mu^r)} \langle c_r, \pi \rangle,
\end{equation}
where 
\begin{equation}\label{eq:iteration_set}
\Pi(\tilde\pi^{(r-1)}, \mu^r) \coloneqq \{ \pi\in \mathcal P((\R^d)^r) : (P_{1, \dots, r-1})_\#\pi = \tilde\pi^{(r-1)}, (P_r)_\# \pi = \mu^r \}
\end{equation}
and the cost functions $c_r$ are given by 
\begin{align}
\mathrm{Algorithm \; 1}: \qquad  &c_r(x_1,\ldots,x_r) \coloneqq  \Vert x_1 - x_r\Vert^2, \label{eq:cost_alg_I}\\
\mathrm{Algorithm \; 2}: \qquad &c_r(x_1,\ldots,x_r) \coloneqq\big \Vert  \sum_{i=1}^{r-1} \bar\lambda_{i, r-1}  x_i - x_r \big \Vert^2, 
\label{eq:cost_alg_II}
\end{align}
where 
$$
\bar \lambda_r = (\bar \lambda_{r,1},\ldots \bar \lambda_{r,r}) \in \Delta_r, 
\qquad 
\bar \lambda_{i, r} \coloneqq \frac{\lambda_i}{\sum_{j=1}^{r} \lambda_j}.
$$
Then, for $\tilde \pi \coloneqq \tilde{\pi}^{(N)}$, we can approximate the optimal barycenter by 
$\tilde \nu \coloneqq (M_\lambda)_\#  \tilde \pi$.
Clearly, we have by construction for both cost functions that
\begin{align}
(P_i) _\#\tilde \pi &= \mu^i, \qquad i=1,\ldots,N,\\
(P_{1,i}) _\#\tilde \pi &= (P_{1,i})_\# \tilde \pi^{(r)} ,\qquad r=2,\ldots,N, \; i=1,\ldots,r.
\end{align}

Since the cost function in the first algorithm always refers to $\mu_1$ we call it \emph{reference algorithm}.
It is somewhat inspired by the recent literature on linear optimal transport, see e.g. \cite{LOT13WSBOR,LOT20MDC,LOT21MC,GWLOT21BBS,HK-LOT22CCST}.

On the other hand, the second algorithm will be called \emph{greedy algorithm}, which can be motivated as follows.
Denote by 
$$
c_{\mathrm{MOT}}^{(r)}(x_1,...,x_r)\coloneqq \sum_{i<l}\lambda_i\lambda_l\|x_i-x_l\|^2
$$
the cost function of the MOT problem \eqref{eq:mot} reduced to the first $r$ measures $\mu^1,...,\mu^r$ and set
\[
m \coloneqq \sum_{i=1}^{r-1} \bar\lambda_{i, r-1} x_i.
\]
By construction, we have $c_{\mathrm{MOT}}^{(N)}=c_{\mathrm{MOT}}$ where $c_{\mathrm{MOT}}$ is defined as in \eqref{eq:mot}.
A greedy approach would be to set the cost function $c_r$ in iteration \eqref{eq:ref_alg} to $c_r=\smash{c_{\mathrm{MOT}}^{(r)}}$.
By Lemma~\ref{lem:jensen_eq},
\begin{align*}
	c_{\mathrm{MOT}}^{(r)}(x_1,...,x_r)
	&= \sum_{i<l<r}\lambda_i\lambda_l\|x_i-x_l\|^2+\lambda_r \sum_{i=1}^{r-1}\lambda_i \|x_i-x_r\|^2 \\
	&= \sum_{i<l<r}\lambda_i\lambda_l\|x_i-x_l\|^2
	+ \lambda_r(1-\lambda_r)\sum_{i=1}^{r-1} \bar\lambda_{i, r-1} \Vert x_i - m\Vert^2
	+ \lambda_r(1-\lambda_r)  \Vert m-x_r\Vert^2.
\end{align*}
Since the first $r-1$ marginals in \eqref{eq:ref_alg} are fixed, the first two sums are just a constant, while last sum is equal to the cost function $c_r$ in \eqref{eq:cost_alg_II} up to a multiplicative constant.
Thus, the greedy approach using the cost function $\smash{c_{\mathrm{MOT}}^{(r)}}$ in \eqref{eq:ref_alg} is equivalent to using the cost function \eqref{eq:cost_alg_II}.

The optimal plans ${\tilde{\pi}}^{(r)}$ in \eqref{eq:ref_alg} are in general not unique.
We are interested in plans with small supports. In the next two subsections we have a closer look at the
computations of such plans and address the question how well these algorithms approximate the exact barycenters. 
We will use that by \eqref{eq:psi_leq_phi} and Theorem \ref{thm:basic} the relation
\[
\frac{\Psi(\tilde\nu)}{\Psi(\hat\nu)} 
= \frac{\Psi(\tilde\nu)}{\Phi(\hat\pi)}
\leq \frac{\Phi(\tilde\pi)}{\Phi(\hat\pi)}
\]
holds true, so that it is sufficient to bound only the last quotient.

Note that some theoretical upper bounds can be obtained already for certain trivial choices.
For example, for $k=\argmax_i \lambda_i$, simply taking $\tilde\nu \coloneqq \mu^k$ yields
\[
\Psi(\tilde\nu)
= \Psi(\mu^k)
= \sumitoNlambdai \Wtwo^2(\mu^k, \mu^i)
\]
and by \eqref{eq:psi_leq_phi} and \eqref{eq:2-wass},
\[
\Psi(\hat\nu)
= \Phi(\hat\pi)
\geq \sum_{s<t}\lambda_s\lambda_t \Wtwo^2(\mu^s, \mu^t) \geq \lambda_k \sumitoNlambdai \Wtwo^2(\mu^k, \mu^i)
\]
such that
\[
\frac{\Psi(\tilde\nu)}{\Psi(\hat\nu)}
\leq \frac 1 {\lambda_k} \leq N.
\]
Further, for the choice $\tilde\nu\coloneqq\sumitoNlambdai\mu_i$, setting $\pi^{st}\in\argmin_{\pi \in \Pi (\mu^s, \mu^t)} \langle c, \pi\rangle$, we get
\[
\Psi(\tilde\nu)
= \sum_{s=1}^N\lambda_s\Wtwo^2(\mu_s, \sum_{t=1}^N\lambda_t\mu^t)
\leq \sum_{s=1}^N\lambda_s \langle c, \sum_{t=1}^N\lambda_t \pi^{st}\rangle
= 2\sum_{s<t}\lambda_s\lambda_t\Wtwo^2(\mu^s, \mu^t)
\]
such that
\[
\frac{\Psi(\tilde\nu)}{\Psi(\hat\nu)}
\leq 2.
\]
However, these linear combinations of the input measures clearly do not convey any useful information for interpolating between the measures in a Wasserstein sense, and will be far from $\hat\nu$ in practice.

\subsection{Algorithm 1 -- Reference Algorithm}\label{sec:ref_alg}
In this subsection, we consider the cost function $c_r$ in \eqref{eq:cost_alg_I}.
First, we observe the following fact.
\begin{lemma}\label{lem:ref_wasserstein_step}
	Let 
	$\tilde\pi = \tilde\pi^{(N)} = \sum_{j=1}^{\tilde M} \tilde \pi_j \delta(\tilde x_{1,j}, \ldots, \tilde x_{N,j})$ 
	be obtained by \eqref{eq:ref_alg} with cost function \eqref{eq:cost_alg_I}. 
	Then it holds for $i=1,\ldots,N$ that
	\begin{equation} \label{eq:ref_lemma_cost}
	\sum_{j=1}^{\tilde M} \tilde \pi_j \| \tilde x_{1,j} - \tilde x_{i,j}\|_2^2
	= \langle c, (P_{1,i})_\# \tilde \pi \rangle
	= \Wtwo^2(\mu^1, \mu^i).
	\end{equation}
\end{lemma}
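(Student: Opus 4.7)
The plan is to reduce the claim at iteration $r$ to the classical two-marginal optimal transport problem between $\mu^1$ and $\mu^r$. The first equality in \eqref{eq:ref_lemma_cost} is immediate from the definition of the push-forward, so the real content is the identification $\langle c, (P_{1,i})_\# \tilde\pi\rangle = \Wtwo^2(\mu^1,\mu^i)$. I would first observe that by the consistency of the iterates stated in the paper just before the lemma, namely $(P_{1,i})_\# \tilde\pi = (P_{1,i})_\# \tilde\pi^{(r)}$ for all $r\ge i$, it suffices to prove the statement with $\tilde\pi^{(i)}$ in place of $\tilde\pi$. A quick induction using $(P_1)_\# \tilde\pi^{(r)} = (P_1)_\# \tilde\pi^{(r-1)}$ together with $\tilde\pi^{(1)}=\mu^1$ also shows that $(P_1)_\# \tilde\pi^{(r)} = \mu^1$ for every $r$.

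The key observation is that $c_r$ in \eqref{eq:cost_alg_I} depends only on coordinates $1$ and $r$. Hence for every $\pi\in \Pi(\tilde\pi^{(r-1)},\mu^r)$ one has $\langle c_r,\pi\rangle = \langle c, (P_{1,r})_\# \pi\rangle$, and since the projection $(P_{1,r})_\# \pi$ lies in $\Pi(\mu^1,\mu^r)$, this immediately gives the lower bound $\langle c_r,\pi\rangle \ge \Wtwo^2(\mu^1,\mu^r)$. In particular $\langle c_r,\tilde\pi^{(r)}\rangle \ge \Wtwo^2(\mu^1,\mu^r)$.

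For the matching upper bound I would glue along $\mu^1$: given an optimal $\sigma^*\in \argmin_{\sigma\in \Pi(\mu^1,\mu^r)} \langle c,\sigma\rangle$, for each atom $x$ of $\mu^1$ one multiplies the conditional distribution of $\tilde\pi^{(r-1)}$ given first coordinate $x$ with the conditional distribution of $\sigma^*$ given first coordinate $x$, and integrates against $\mu^1$. The resulting plan $\pi^*$ lies in $\Pi(\tilde\pi^{(r-1)},\mu^r)$ and satisfies $(P_{1,r})_\# \pi^* = \sigma^*$, so the minimum in \eqref{eq:ref_alg} equals $\Wtwo^2(\mu^1,\mu^r)$. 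Any optimal $\tilde\pi^{(r)}$ therefore achieves this value and inherits an optimal $(1,r)$-marginal. Setting $r=i$ and combining with the reduction in the first paragraph yields \eqref{eq:ref_lemma_cost}; the case $i=1$ is trivial since then $c_r=0$. The only step that requires care is the gluing construction, but in the fully discrete setting it reduces to elementary conditional-probability bookkeeping and is not the conceptual bottleneck of the argument.
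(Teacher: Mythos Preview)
Your proposal is correct and follows essentially the same approach as the paper: both obtain the lower bound from $(P_{1,r})_\#\pi\in\Pi(\mu^1,\mu^r)$ and the upper bound by gluing $\tilde\pi^{(r-1)}$ with an optimal two-marginal plan $\sigma^*\in\Pi(\mu^1,\mu^r)$ along the common $\mu^1$-marginal (the paper writes the gluing explicitly as $\bar\pi^r_{j_1,\ldots,j_r}=\tilde\pi^{(r-1)}_{j_1,\ldots,j_{r-1}}\pi^{1r}_{j_1,j_r}/\mu^1_{j_1}$, which is precisely your conditional-probability product). The only cosmetic difference is that the paper packages the reduction to $\tilde\pi^{(i)}$ as an induction step, whereas you invoke the previously stated consistency relation $(P_{1,i})_\#\tilde\pi=(P_{1,i})_\#\tilde\pi^{(i)}$ directly.
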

\begin{proof}
	The first equality follows by construction, so that we only have to verify the second one.
	We prove this assertion by induction on $r$, 
	i.e., we show  for $i=2,\ldots,r$ that
	\begin{equation}\label{eq:ref_lemma_cost_r}
	\langle c, (P_{1,i})_\# \tilde \pi^{(r)} \rangle  = \mathcal W_2 ^2(\mu^1, \mu^i).
	\end{equation}
	For $r=2$, the assertion follows by construction.
	
	Let $r \ge 3$ and assume that \eqref{eq:ref_lemma_cost_r} is fulfilled for $\tilde \pi^{(r-1)}$.
	For $i=2,\ldots,r-1$, we have by the marginal constraint in \eqref{eq:ref_alg} that
	$$
	(P_{1,i})_\# \tilde \pi^{(r)} = (P_{1,i})_\# \tilde \pi^{(r-1)}
	$$
	so that  by induction assumption $\tilde \pi^{(r)}$ fulfills the assertion for those $i$.
	It remains to consider $i=r$.
	Let
	$$
	\tilde \pi^{(r)} = \sum_{j=1}^{\tilde M_r} \tilde\pi_j^{(r)} \delta( \tilde x_{1,j}^{(r)},\ldots, \tilde x_{N,j}^{(r)} ).
	$$
	Then it follows
	\begin{align*}
	\sum_{j=1}^{\tilde M_r} \tilde \pi_j^{(r)} \Vert \tilde x_{1, j}^{(r)} - \tilde x_{r, j}^{(r)}\Vert^2
	= \big\langle c, (P_{1,r})_\# \tilde \pi^{(r)} \big \rangle
	\ge  \mathcal W _2 ^2(\mu^1,\mu^r).
	\end{align*}
	To show the reverse direction, let
	\[
	\pi^{1r} \in \argmin_{\pi\in \Pi(\mu^1, \mu^r)} \sum_{j_1, j_r} \pi_{j_1, j_r} \Vert x^1_{j_1} - x^r_{j_r} \Vert^2
	\]
	and define the measure 
	\[
	\bar \pi^r \coloneqq \sum_{j_1,\ldots,j_r} \bar \pi^r_{j_1, \dots, j_r} \delta(x^1_{j_1}, \ldots,x^r_{j_r}),
	\qquad
	\bar \pi^r_{j_1, \dots, j_r} \coloneqq \frac{\tilde\pi^{(r-1)}_{j_1, \dots, j_{r-1}}\pi^{1r}_{j_1, j_r}}{\mu_{j_1}^1}.
	\]
	We have that $\bar \pi^r\in  \Pi(\tilde\pi^{(r-1)}, \mu^r)$, since
	\[
	\sum_{j_r} \bar \pi^r_{j_1, \dots, j_r}
	= 
	\sum_{j_r} \frac{\tilde\pi^{(r-1)}_{j_1, \dots, j_{r-1}}\pi^{1r}_{j_1, j_r}}{\mu_{j_1}^1}
	= \frac{\tilde\pi^{(r-1)}_{j_1, \dots, j_{r-1}}}{\mu_{j_1}^1} \sum_{j_r} \pi^{1r}_{j_1, j_r}
	= \tilde\pi^{(r-1)}_{j_1, \dots, j_{r-1}}
	\]
	and
	\begin{align*}
	\sum_{j_1, \dots, j_{r-1}} \bar \pi^r_{j_1, \dots, j_r}
	&= \sum_{j_1, \dots, j_{r-1}} \frac{\tilde\pi^{(r-1)}_{j_1, \dots, j_{r-1}}\pi^{1r}_{j_1, j_r}}{\mu_{j_1}^1}
	= \sum_{j_1} \frac{\pi^{1r}_{j_1, j_r}}{\mu_{j_1}^1} \sum_{j_2, \dots, j_{r-1}}\tilde\pi^{(r-1)}_{j_1, \dots, j_{r-1}}\\
	&= \sum_{j_1} \pi^{1r}_{j_1, j_r} = \mu^r_{j_r}.
	\end{align*}
	Since $\tilde\pi^{(r)}\in \argmin_{\pi \in \Pi(\tilde\pi^{(r-1)}, \mu^r)} \langle c_r, \pi \rangle$ by construction, we obtain
	\begin{align*}
	\langle c, (P_{1, r})_\# \tilde\pi^{(r)} \rangle
	&= \langle c_r, \tilde\pi^{(r)} \rangle
	\leq \langle  c_r, \bar \pi^r\rangle 
	= \sum_{j_1, \dots, j_r} c_r(x_{j_1}^1, \dots, x_{j_r}^r) \bar \pi^r_{j_1, \dots, j_r} \\
	&= \sum_{j_1, j_r} \Vert x_{j_1}^1 - x_{j_r}^r \Vert^2 \sum_{j_2, \dots, j_{r-1}} \bar \pi^r_{j_1, \dots, j_r}
	=\; \sum_{j_1, j_r} \pi^{1r}_{j_1, j_r} \Vert x_{j_1}^1 - x_{j_r}^r\Vert^2
	= \Wtwo^2(\mu^1, \mu^r),
	\end{align*}
	which yields the assertion.
\end{proof}

Now we can give an intuition for a transport plan $\tilde\pi$ obtained from iteration \eqref{eq:ref_alg} with cost function \eqref{eq:cost_alg_I}.
Consider the multivariate cost function
\[
\tilde c(x_1, \dots, x_N)
\coloneqq \sum_{i=2}^N \lambda_i \Vert x_1-x_i\Vert^2
=  \sum_{i=2}^N \lambda_i c(x_1, x_i).
\]
Note that this is, up to the multiplicative constant $\lambda_1$, precisely $c_{\mathrm{MOT}}$ without all terms that do not depend on $x_1$.
For any $\pi\in \Pi(\mu^1, \dots, \mu^N)$ it holds
\[
\langle \tilde c, \pi\rangle
= \sum_{i=2}^N \lambda_i \langle c, (P_{1, i})_\#\pi\rangle
\geq \sum_{i=2}^N \lambda_i \Wtwo^2(\mu^1, \mu^i).
\]
As a consequence of Lemma~\ref{lem:ref_wasserstein_step}, equality holds for $\pi=\tilde\pi$, such that $\tilde\pi$ is the solution to the minimization problem
\[
\min_{\pi\in\Pi(\mu^1, \dots, \mu^N)} \langle \tilde c, \pi\rangle.
\]

Indeed, due to the special cost function, we can find $\tilde \pi^{(r)}$ by solving the $N$ optimal transport
problems belonging to $\mathcal W_2^2(\mu^1,\mu^i)$, $i=1,\ldots,N$ and then choosing any way to fit the marginals 
$\Pi(\tilde \pi^{(r-1)},\mu^r)$. Algorithm \ref{alg:ref} shows a possibility how this can be done such that
the resulting measure $\tilde \pi = \tilde \pi^{(N)}$ has a small support.
Note that the first for-loop, which is the computational bottleneck of the algorithm, can readily be parallelized for further speedups.

\begin{algorithm}[htb]
	\begin{algorithmic}
		\State \textbf{Input:} Discrete measures $\mu^i = \sum_{j=1}^{n_i} \mu^i_j \delta(x^i_{j})$, $i=1, \dots, N$, with $x^1_1<\dots<x^1_{n_1}$ if $d=1$
		\For{$i=2,\dots, N$}
		\State Compute 
		\begin{align}
		\pi^i
		&\in
		\argmin_{\pi\in\Pi(\mu^1, \mu^i)} \langle c, \pi \rangle
		= \sum_j \pi^i_j (x_{1, i, j}, x_{i,j})
		\quad \text{s.t.} \quad \#\supp(\pi)\leq n_1 + n_i -1
		\end{align}
		\If{$d=1$}
		\State Sort lexicographically $(x_{1, i, 1}, x_{i, 1}) < (x_{1, i, 2}, x_{i, 2}) < \dots$
		\EndIf
		\EndFor
		\State \textbf{Initialization:} $\tilde\pi = 0$
		\For{$k=1, \dots, n_1$}
		\While{$x_k^1\in P_1(\supp(\pi^i))$ for $i=2, \dots, N$}
		\For{$i=2, \dots, N$}
		\State $j_i \gets \min\{j:x_{1, i, j} = x_k^1\}$
		\EndFor
		\State $h\gets \min_{j_i} \pi^i_{j_i}$
		\State $\tilde\pi \gets \tilde\pi + h \delta(x_{k}^1, x_{2, j_2}, \dots, x_{N, j_N})$ 
		\For{$i=2, \dots, N$}
		\State $\pi^i \gets \pi^i - h \delta (x_k^1, x_{i, j_i})$
		\EndFor
		\EndWhile		
		\EndFor
		\State \textbf{Output:} $\tilde\pi$
		\caption{Reference algorithm}
		\label{alg:ref}
	\end{algorithmic}
\end{algorithm}
Note that for $N=2$, the choice $j_i \gets \min\{j:x_{1, i, j} = x_k^1\}$ in Algorithm~\ref{alg:ref} corresponds to the so-called north-west corner rule, see e.g. \cite[Sec.~3.4.2]{PC19book}.
In that case, this heuristic is often used to merely produce a sparse coupling as an initialization for another optimization procedure such as the network simplex method.
Despite there being more elaborate approaches, we use this one for simplicity.

Here is an example of how the algorithm performs:
\begin{example}\label{ex:ot_plans_cannot_be_read_off_of_mot}
	Suppose
	\begin{align*}
	\mu^1 = \frac 1 2 \delta(x_1^1) + \frac 1 2 \delta(x_2^1), \quad
	\mu^2 = \frac 1 4 \delta(x_1^2) + \frac 3 4 \delta(x_2^2), \quad 
	\mu^3 = \frac 1 3 \delta(x_1^3) + \frac 2 3 \delta(x_2^3)
	\end{align*}
	and that
	\begin{align*}
	\pi^2 &= \frac 1 4 \delta(x_1^1, x_1^2) + \frac 1 4 \delta(x_1^1, x_2^2) + \frac 1 2 \delta(x_2^1, x_2^2)
	=  \frac 1 4 \delta(x_{1, 2, 1}, x_{2, 1}) + \frac 1 4 \delta(x_{1, 2, 2}, x_{2, 2}) + \frac 1 2 \delta(x_{1, 2, 3}, x_{2, 3}) \\ 
	\pi^3 &= \frac 1 3 \delta(x_1^1, x_1^3) + \frac 1 6 \delta(x_1^1, x_2^3) + \frac 1 2 \delta(x_2^1, x_2^3)
	= \frac 1 3 \delta(x_{1, 3, 1}, x_{3, 1}) + \frac 1 6 \delta(x_{1, 3, 2}, x_{3, 2}) + \frac 1 2 \delta(x_{1, 3, 3}, x_{3, 3}).
	\end{align*}
	Initialize $\tilde\pi\gets 0$.
	
	$\boldsymbol{k = 1}$
	
	$j_2 \gets 1$, $j_3 \gets 1 \Rightarrow h\gets \frac 1 4$. Then
	\begin{align*}
	\tilde\pi &\gets \frac 1 4 \delta(x_1^1, x_1^2, x_1^3), \\
	\pi^2 &\gets \frac 1 4 \delta(x_1^1, x_2^2) + \frac 1 2 \delta(x_2^1, x_2^2), \quad
	\pi^3 \gets \frac 1 {12} \delta(x_1^1, x_1^3) + \frac 1 6 \delta(x_1^1, x_1^3) + \frac 1 2 \delta(x_2^1, x_2^3).
	\end{align*}
	$j_2 \gets 2$, $j_3 \gets 1 \Rightarrow h\gets \frac 1 {12}$. Then
	\begin{align*}
	\tilde\pi &\gets \frac 1 4 \delta(x_1^1, x_1^2, x_1^3) + \frac 1 {12} \delta(x_1^1, x_2^2, x_1^3), \\
	\pi^2 &\gets \frac 1 6 \delta(x_1^1, x_2^2) + \frac 1 2 \delta(x_2^1, x_2^2), \quad
	\pi^3 \gets \frac 1 6 \delta(x_1^1, x_1^3) + \frac 1 2 \delta(x_2^1, x_2^3).
	\end{align*}
	$j_2 \gets 2$, $j_3 \gets 2 \Rightarrow h\gets \frac 1 6$. Then
	\begin{align*}
	\tilde\pi &\gets \frac 1 4 \delta(x_1^1, x_1^2, x_1^3) + \frac 1 {12} \delta(x_1^1, x_2^2, x_1^3) + \frac 1 6 \delta(x_1^1, x_2^2, x_1^3), \\
	\pi^2 &\gets \frac 1 2 \delta(x_2^1, x_2^2), \quad
	\pi^3 \gets \frac 1 2 \delta(x_2^1, x_2^3).
	\end{align*}
	
	$\boldsymbol{k = 2}$
	
	$j_2 \gets 3$, $j_3 \gets 3 \Rightarrow h\gets \frac 1 2$. Then
	\begin{align*}
	\tilde\pi &\gets \frac 1 4 \delta(x_1^1, x_1^2, x_1^3) + \frac 1 {12} \delta(x_1^1, x_2^2, x_1^3) + \frac 1 6 \delta(x_1^1, x_2^2, x_1^3)+ \frac 1 2 \delta(x_2^1, x_2^2, x_2^3), \\
	\pi^2 &\gets 0, \quad
	\pi^3 \gets 0.
	\end{align*}
\end{example}

By the next proposition, $\tilde \pi$ has a sparse support.

\begin{proposition}
	Let $\tilde\pi$ be computed by Algorithm~\ref{alg:ref}. Then
	\[
	\#\supp(\tilde\pi) \leq \sumitoN n_i - N + 1.
	\]
\end{proposition}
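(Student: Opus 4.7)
The plan is to bound $\#\supp(\tilde\pi)$ by carefully counting the number of iterations of the inner \textbf{while} loop. Let $T_k$ denote the number of \textbf{while}-loop iterations triggered by outer loop value $k$. Each such iteration appends exactly one atom to $\tilde\pi$, and two different iterations within the same $k$ produce different tuples (at least one index $j_i$ must have advanced after the previous iteration emptied $\pi^i_{j_i}$, and different support atoms of $\pi^i$ sharing the first coordinate $x_k^1$ have different second coordinates); for different $k$, the first coordinate differs. Hence $\#\supp(\tilde\pi) = \sum_{k=1}^{n_1} T_k$. By the sparsity constraint imposed on $\pi^i$ inside the algorithm, $\#\supp(\pi^i) \le n_1 + n_i - 1$, and setting $s_{k,i} := \#\{j : x_{1,i,j} = x_k^1\}$ we have $\sum_k s_{k,i} = \#\supp(\pi^i)$.

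The crucial observation is a mass-conservation identity: in every iteration of the \textbf{while}-loop processing index $k$, each $\pi^i$ loses exactly the same amount $h$ of mass located at $x_k^1$. Since all $\pi^i$ start with mass $\mu^1_k$ there by the marginal constraint $(P_1)_\#\pi^i = \mu^1$, and the loop exits precisely when one of them runs out of $x_k^1$-atoms, at termination all $N-1$ plans \emph{simultaneously} carry zero mass (and hence zero atoms) at $x_k^1$. In particular, the total number of atoms emptied while processing $k$ equals $\sum_{i=2}^N s_{k,i}$.

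The key counting step is to recognize that the last iteration alone empties at least $N-1$ atoms. Indeed, by the mass-conservation identity, at the start of iteration $T_k$ each $\pi^i$ carries the same residual mass $m$ at $x_k^1$; since afterwards that residual drops to zero while only the single atom $\pi^i_{j_i}$ of $\pi^i$ is modified, each $\pi^i$ must have had exactly one remaining $x_k^1$-atom, with mass $m$, and all $N-1$ of these are emptied at once. Combining this with the fact that every earlier iteration empties at least one atom (any $i$ attaining the minimum), I obtain the bookkeeping inequality
\[
\sum_{i=2}^N s_{k,i} \;\ge\; (T_k-1) + (N-1), \qquad \text{i.e.,} \qquad T_k \le \sum_{i=2}^N s_{k,i} - N + 2.
\]

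Summing over $k=1,\ldots,n_1$ then yields
\[
\#\supp(\tilde\pi) \;\le\; \sum_{i=2}^N \#\supp(\pi^i) - n_1(N-2) \;\le\; \sum_{i=2}^N (n_1+n_i-1) - (N-2)n_1 \;=\; \sum_{i=1}^N n_i - N + 1,
\]
which is the claimed bound. The main obstacle is the pigeonhole-style argument giving $N-1$ simultaneous emptyings in the last iteration: without it, the naive bound ``each iteration empties at least one atom'' produces only $\#\supp(\tilde\pi) \le \sum_{i=2}^N \#\supp(\pi^i)$, overshooting the target by a term linear in $(N-2)n_1$. The mass-conservation identity across all $\pi^i$ is precisely what forces the loop to close with a multi-emptying event.
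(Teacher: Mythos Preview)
Your proof is correct and follows essentially the same counting argument as the paper: both define the per-$k$ atom counts $s_{k,i}$ (the paper calls them $n(i,k)$), bound the number of while-iterations at index $k$ by $\sum_{i=2}^N s_{k,i}-N+2$, and then sum over $k$ using $\#\supp(\pi^i)\le n_1+n_i-1$. The paper states this per-$k$ bound without further comment; your mass-conservation argument (all $\pi^i$ lose the same amount $h$ of $x_k^1$-mass per iteration, hence the final iteration empties one atom in each of the $N-1$ plans simultaneously) is exactly the missing justification, so your write-up is in fact more complete than the original.
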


\begin{proof}
	Let $n(i, k)\coloneqq \# \{ j_i: (x_k^1, x_{i, j_i}) \in \supp(\pi^i) \}$ 
	denote the number of support pairs in $\pi^i$ that contain $x_k^1$.
	Then, for each $k =1, \ldots,n_1$ in Algorithm~\ref{alg:ref}, the while-loop is iterated at most
	\[
	\sum_{i=2}^N (n(i, k)-1)+1 = \sum_{i=2}^N n(i, k) - N + 2
	\]
	times. 	In total, this amounts to 
	\begin{align*}
	\sum_{k=1}^{n_1} \Big(\sum_{i=2}^N n(i, k) -N+2 \Big)
	&= n_1(2-N) + \sum_{i=2}^N \#\supp(\pi^i) 
	\\
	&\leq-(N-2)n_1 + \sum_{i=2}^N (n_1 + n_i-1)
	= \sumitoN n_i - N + 1
	\end{align*}
	iterations.
	Noticing that there is exactly one support point added to $\tilde\pi$ in each while-iteration,  this yields the assumption.
\end{proof}

The following theorem gives an upper bound for the relative error.
Later we will see that this bound can in general not be improved.

\begin{theorem}\label{thm:ref-upper}
	Let $\tilde\pi$ be a multi-marginal plan obtained by \eqref{eq:ref_alg}.
	Then it holds
	\begin{equation}\label{eq:ref_worst_case}
	\frac{\Phi(\tilde\pi)}{\Phi(\hat\pi)} \leq \frac 1 {\lambda_1},
	\end{equation}
	If we choose reference measure $\mu^1$ according to $\lambda_1 \ge  \dots \ge \lambda_N$, then the right-hand side is not larger than $N$.
	If we choose the reference measure randomly, so that with probability $\lambda_i$ we take $\mu^i$, $i=1,\ldots,N$ as reference measure, 
	then it holds
	\begin{eqnarray}\label{eq:ref_rand_bound}
	\frac{\mathbb E [\Phi(\tilde \pi)]}{\Phi(\hat\pi)} \leq 2.
	\end{eqnarray}
\end{theorem}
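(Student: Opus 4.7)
The plan is to exploit the key fact from Lemma~\ref{lem:ref_wasserstein_step}, namely that the pairwise $(1,i)$-marginals of $\tilde\pi$ realize the optimal Wasserstein costs $\mathcal W_2^2(\mu^1,\mu^i)$. Rewriting $\Phi$ in the mean-centered form \eqref{eq:Phi_2} and then applying Lemma~\ref{lem:jensen_eq} with the anchor point $y=\tilde x_{1,j}$ gives
\[
\sum_{i=1}^N \lambda_i \|\tilde x_{i,j}-\tilde m_j\|^2
= \sum_{i=2}^N \lambda_i \|\tilde x_{i,j}-\tilde x_{1,j}\|^2 - \|\tilde m_j-\tilde x_{1,j}\|^2.
\]
Summing over $j$ with weights $\tilde\pi_j$ and discarding the non-negative correction yields the clean upper bound
\[
\Phi(\tilde\pi)\le \sum_{i=2}^N \lambda_i \mathcal W_2^2(\mu^1,\mu^i).
\]

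For the lower bound on $\Phi(\hat\pi)$, I would invoke \eqref{eq:2-wass} but keep only the terms involving the index $1$:
\[
\Phi(\hat\pi)\ge \sum_{s<t}\lambda_s\lambda_t\mathcal W_2^2(\mu^s,\mu^t)\ge \lambda_1\sum_{i=2}^N \lambda_i\mathcal W_2^2(\mu^1,\mu^i).
\]
Dividing the two inequalities, the weighted sums cancel and leave exactly $1/\lambda_1$, proving \eqref{eq:ref_worst_case}. The deterministic $N$-bound is then a one-line consequence of $\lambda_1\ge 1/N$ whenever $\lambda_1=\max_i\lambda_i$, since $1=\sum_i\lambda_i\le N\lambda_1$.

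For the randomized statement, I would re-run the argument above with a generic reference index $k$ in place of $1$, which gives $\Phi(\tilde\pi^{(k)})\le \sum_{i\neq k}\lambda_i\mathcal W_2^2(\mu^k,\mu^i)$. Taking the expectation with respect to $k$ (chosen with probability $\lambda_k$) and symmetrizing,
\[
\mathbb E[\Phi(\tilde\pi)]\le \sum_{k=1}^N\sum_{i=1}^N \lambda_k\lambda_i \mathcal W_2^2(\mu^k,\mu^i)
= 2\sum_{s<t}\lambda_s\lambda_t\mathcal W_2^2(\mu^s,\mu^t).
\]
Combining with the same pairwise lower bound $\Phi(\hat\pi)\ge \sum_{s<t}\lambda_s\lambda_t\mathcal W_2^2(\mu^s,\mu^t)$ produces the claimed factor of $2$ in \eqref{eq:ref_rand_bound}.

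I do not anticipate a real obstacle: both directions of the bound use only Lemma~\ref{lem:ref_wasserstein_step}, the Jensen-type identity in Lemma~\ref{lem:jensen_eq}, and the elementary estimate \eqref{eq:2-wass}. The only subtle point is to realize that the convenient way to estimate $\Phi(\tilde\pi)$ is to expand it around its own barycentric mean $\tilde m_j$ and then switch the anchor to $\tilde x_{1,j}$ (producing an extra non-negative term one can safely drop), since this is precisely what converts pairwise $(s,t)$-distances into the quantities Lemma~\ref{lem:ref_wasserstein_step} controls.
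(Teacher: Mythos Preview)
Your proposal is correct and follows essentially the same argument as the paper: both use Lemma~\ref{lem:ref_wasserstein_step} together with the mean-centered rewriting \eqref{eq:Phi_2} and Lemma~\ref{lem:jensen_eq} (anchored at $\tilde x_{1,j}$) to obtain $\Phi(\tilde\pi)\le \sum_{i=2}^N\lambda_i\,\mathcal W_2^2(\mu^1,\mu^i)$, and then combine this with the pairwise lower bound \eqref{eq:2-wass} on $\Phi(\hat\pi)$; the randomized bound is handled identically by averaging over the reference index.
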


\begin{proof}
	By Lemma~\ref{lem:ref_wasserstein_step} we know for 
	$\tilde \pi = \sum_{j=1}^{\tilde M} \tilde \pi_j \delta(\tilde x_{1,j}, \ldots,\tilde x_{N,j})$ that
	\begin{align*}
	\sum_{j=1}^{\tilde M} \tilde \pi_j \Vert \tilde x_{1, j} - \tilde x_{i, j} \Vert^2
	= \Wtwo^2(\mu^1, \mu^i).
	\end{align*}
	Then, for $\tilde m_j \coloneqq M_\lambda(\tilde x_{1, j}, \dots, \tilde x_{N, j})$, we obtain by \eqref{eq:Phi_2} and Lemma~\ref{lem:jensen_eq}
	that
	\begin{align*}
	\Phi(\tilde\pi) 
	&= 
	\sum_{j=1}^{\tilde M} \tilde \pi_j \sumitoNlambdai \Vert \tilde x_{i, j} - \tilde m_j\Vert^2
	= 
	\sum_{j=1}^{\tilde M} \tilde \pi_j \Big( -\Vert \tilde x_{1, j} -\tilde m_j\Vert^2 + \sumitoNlambdai \Vert \tilde x_{1, j} - \tilde x_{i, j}\Vert^2 \Big) \\
	&\leq \sumitoNlambdai \sum_{j=1}^{\tilde M}\tilde \pi_j  \Vert \tilde x_{1, j} - \tilde x_{i, j}\Vert^2
	= \sum_{i=2}^N \lambda_i \sum_{j=1}^{\tilde M}\tilde \pi_j  \Vert \tilde x_{1, j} - \tilde x_{i, j}\Vert^2
	= \sum_{i=2}^N \lambda_i \Wtwo^2(\mu^1, \mu^i).
	\end{align*}
	On the other hand, we conclude by \eqref{eq:2-wass} that
	\begin{align*}
	\Phi(\hat\pi)
	\ge \sum_{s<t} \lambda_s\lambda_t \Wtwo^2(\mu^s, \mu^t) \geq \lambda_1 \sum_{i=2}^N \lambda_i \Wtwo^2(\mu^1, \mu^i).
	\end{align*}
	Thus,
	\[
	\frac{\Phi(\tilde\pi)}{\Phi(\hat\pi)} 
	\leq 
	\frac{\sum_{i=2}^N \lambda_i \Wtwo^2(\mu^1, \mu^i)}{\lambda_1 \sum_{i=2}^N \lambda_i \Wtwo^2(\mu^1, \mu^i)} 
	= \frac 1 {\lambda_1}.
	\]
	Further, if the reference measure is chosen randomly according to the probabilities $\lambda_i$, we obtain
	\[
	\mathbb E [ \Phi(\tilde\pi) ]
	\leq \sum_{i=1}^N \lambda_{i} \sum_{i\neq j} \lambda_{j} \Wtwo^2(\mu^i, \mu^j)
	= 
	2 \sum_{i<j} \lambda_{i} \lambda_{j} \Wtwo^2(\mu^i, \mu^j),
	\]
	so that
	\[
	\frac{\mathbb E [\Phi(\tilde \pi)]}{\Phi(\hat\pi)} \leq 2.
	\]
\end{proof}

\begin{theorem} \label{thm:ref-lower}
	For every odd $N\in \mathbb{N}$ and $\varepsilon > 0$, there exist measures $\mu_1, \dots, \mu_N$, $\lambda\in \Delta_N$, such that
	\begin{equation}\label{eq:ref_worst_example_odd}
	\frac{\Phi(\tilde\pi)}{\Phi(\hat\pi)} \geq N-\varepsilon.
	\end{equation}
	For every even $N\in \mathbb N$, there exist measures $\mu_1, \dots, \mu_N$, $\lambda\in \Delta_N$, such that
	\[
	\frac{\Phi(\tilde\pi)}{\Phi(\hat\pi)} \geq N-\frac 1 {N-1} - \varepsilon.
	\]
	Further, if the reference measure is chosen randomly as in Theorem~\ref{thm:ref-upper}, the upper bound \eqref{eq:ref_rand_bound} is asymptotically tight, meaning that for every $N\in \mathbb N$, 
	there are measures $\mu_1, \dots, \mu_N$, $\lambda\in \Delta_N$, 
	such that if the corresponding plans for every $N$ are denoted by $\tilde\pi(N)$ and $\hat\pi(N)$, we have
	\[
	\lim_{N\to\infty} \frac{\mathbb E [\Phi(\tilde \pi(N))]}{\Phi(\hat\pi(N))} = 2.
	\]
\end{theorem}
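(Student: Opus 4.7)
The plan is to exhibit a single family of inputs simultaneously witnessing all three lower bounds by exploiting the fact that when $\mu^1$ is a single Dirac, the reference costs $c_r$ in \eqref{eq:ref_alg} collapse to a constant on the feasible sets, so the iteration imposes no constraint beyond the marginal conditions. Concretely, I take $\mu^1 = \delta(0) \in \mathcal{P}(\R^2)$, $\mu^2 = \cdots = \mu^N = \nu \coloneqq \tfrac{1}{2}\delta(y) + \tfrac{1}{2}\delta(-y)$ with $y = (0,1)$, and $\lambda_i = 1/N$. Any iterate $\tilde\pi^{(r)}$ is then supported in $\{0\} \times \{y, -y\}^{r-1}$, and since $c_r(\tilde x_{1,j}, \tilde x_{r,j}) = \|\tilde x_{r,j}\|^2 = 1$ is constant on the feasible support, the step-$r$ $\argmin$ equals the entire feasible polytope $\Pi(\tilde\pi^{(r-1)}, \mu^r)$. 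For comparison, the aligned plan $\hat\pi \coloneqq \tfrac{1}{2}\delta(0, y, \ldots, y) + \tfrac{1}{2}\delta(0, -y, \ldots, -y)$ is feasible and by \eqref{eq:Phi_2} has cost $\Phi(\hat\pi) = \lambda_1(1-\lambda_1) = (N-1)/N^2$; combined with \eqref{eq:2-wass}, using $\Wtwo^2(\mu^i, \mu^j) = 0$ for $i, j \geq 2$, this is optimal.

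For odd $N$, I let $\tilde\pi$ be the uniform distribution over all tuples $(0, z_2, \ldots, z_N)$ with $z_i \in \{y, -y\}$ and exactly $(N-1)/2$ of them equal to $y$. By symmetry each marginal on coordinate $i \geq 2$ equals $\nu$, and the partial marginals $(P_{1, \ldots, r})_\# \tilde\pi$ are valid iterates of \eqref{eq:ref_alg} by the previous observation. Every tuple has weighted mean $\tilde m_j = 0 = \tilde x_{1, j}$, so \eqref{eq:Phi_2} gives $\Phi(\tilde\pi) = (1/N) \sum_{i \geq 2} \|y\|^2 = (N-1)/N$, and the ratio is exactly $N \geq N - \varepsilon$. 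For even $N$, the closest split is $(N-2)/2$ versus $N/2$; I use the equal mixture of the uniform distributions on these two families of tuples, which again has the correct marginals by symmetry and is admissible. Each tuple has mean $\tilde m_j = \pm y/N$, and a routine computation with \eqref{eq:Phi_2} using the identity $(N-2)(N+1)^2 + N(N-1)^2 = 2(N^3 - N^2 - N - 1)$ yields cost per tuple $(N^2 - N - 1)/N^2$. Hence the ratio equals $(N^2 - N - 1)/(N - 1) = N - 1/(N-1)$.

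For the asymptotic tightness of the randomized bound, the same example suffices. When $\mu^1$ is chosen as reference (probability $1/N$), pick the bad plan above to obtain $\Phi(\tilde\pi) = (N-1)/N$. When any $\mu^i$ with $i \geq 2$ is chosen (total probability $(N-1)/N$), the two-marginal plans $\pi^{i, j}$ are all uniquely determined: for $j \geq 2$, $j \neq i$, because $\mu^j = \mu^i$ strictly forces the identity coupling, and for $j = 1$ because $\mu^1$ is a Dirac. So the gluing reproduces $\hat\pi$ exactly and $\Phi(\tilde\pi) = (N-1)/N^2 = \Phi(\hat\pi)$. By linearity of expectation, $\E[\Phi(\tilde\pi)]/\Phi(\hat\pi) = (1/N)\cdot N + ((N-1)/N)\cdot 1 = (2N-1)/N$, which tends to $2$. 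The main obstacle throughout is verifying that the adversarial $\tilde\pi$ really is admissible for \eqref{eq:ref_alg}; this rests on the fact that once $\tilde x_{1, j}$ is independent of $j$, the reference cost loses all discriminatory power and the step-$r$ subproblem is degenerate, so arbitrary choices of downstream marginal structure are allowed.
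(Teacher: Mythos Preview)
Your proof is correct and takes a genuinely different route from the paper. The paper constructs measures supported on $M$ equispaced points on a circle embedded in $\R^2$; there the two-marginal optimal transports $\pi^{1,i}$ are \emph{uniquely} determined (nearest-neighbour matching), and the glued plan is forced to be bad. Letting $M\to\infty$ and using a torus-to-$\R^2$ approximation lemma yields the ratios $N-\varepsilon$ and $N-\tfrac{1}{N-1}-\varepsilon$. Your construction is far simpler: by collapsing $\mu^1$ to a Dirac you make $c_r$ constant on each feasible set, so the $\argmin$ in \eqref{eq:ref_alg} is the full polytope and you may select the worst admissible $\tilde\pi$ directly. The marginal checks (hypergeometric symmetry in the odd case, the complementary binomial identity in the even case) and the cost computations via \eqref{eq:Phi_2} are all correct; you even hit the bounds $N$ and $N-\tfrac{1}{N-1}$ exactly, without the $\varepsilon$. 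For the randomized statement, your observation that any $\mu^i$ with $i\ge 2$ as reference forces the identity coupling (since $\Wtwo^2(\nu,\nu)=0$ has a unique optimizer) and hence $\tilde\pi=\hat\pi$ is clean, giving $\E[\Phi(\tilde\pi)]/\Phi(\hat\pi)=(2N-1)/N\to 2$.

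The trade-off is this: the paper's example shows the lower bound is attained even when the algorithm has \emph{no} tie-breaking freedom (the two-marginal plans are unique), so any implementation of the iteration---including Algorithm~\ref{alg:ref} with a fixed gluing rule---is genuinely bad on those inputs. Your example shows only that \emph{some} element of the $\argmin$ is bad; a particular implementation (e.g.\ the north-west-corner gluing in Algorithm~\ref{alg:ref}) could accidentally return $\hat\pi$ on your inputs. Since the theorem is phrased for plans ``obtained by \eqref{eq:ref_alg}'' rather than for Algorithm~\ref{alg:ref} specifically, your reading is legitimate, but the paper's construction is robust to this distinction.
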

The proof can be found in the appendix.

\subsection{Algorithm 2 -- Greedy Algorithm}\label{sec:greedy}

The second algorithm with cost function $c_r$ in \eqref{eq:cost_alg_II} is a greedy algorithm.
Again it requires only the solution of a two-marginal Wasserstein problem in each iteration.
To this end,
let 
\[
\tilde \pi^{(r-1)} = \sum_{k=1}^{\tilde M_{r-1}} \tilde \pi^{(r-1)}_k \delta (\tilde x_{1,k}^{(r-1)}, \ldots, \tilde x_{r-1,k}^{(r-1)} ).
\]
Then the iteration \eqref{eq:ref_alg} can be rewritten as in Algorithm \ref{alg:greedy}.

\begin{algorithm}[htb]
	\begin{algorithmic}
		\State \textbf{Input:} Discrete measures $\mu^1, \dots, \mu^N$, weights $\lambda \in \Delta_N$
		\State \textbf{Initialization:} $\tilde\pi^{(1)} \coloneqq \mu^1$
		\For {$r=2, \dots, N$} 
		\begin{align*}
		\tilde \nu^{(r-1)} 
		&\coloneqq 
		(M_{\bar \lambda_{r-1}} )_\# \tilde \pi^{(r-1)} = 
		\sum_{k=1}^{\tilde M_{r-1}} \tilde \pi^{(r-1)}_k \delta (\tilde m_k^{(r-1)} ),
		\\
		&\qquad \text{where } \tilde m_k^{(r-1)} 
		\coloneqq 
		M_{\bar \lambda_{r-1}}(\tilde x_{1,k}^{(r-1)}, \ldots, \tilde x_{r-1,k}^{(r-1)} )
		= \sum_{i=1}^{r-1} \bar \lambda_{i, r-1} \tilde x_{i,k}^{(r-1)}.
		\\
		\bar \pi^{(r)}
		&\in \argmin_{\pi \in \Pi (\tilde \nu^{(r-1)}) , \mu^r)} \langle c , \pi \rangle 
		= \sum_{j=1}^{\tilde M_{r}} \bar \pi^{(r)}_j \delta ( \bar m_j^{(r-1)}, \tilde x_{r,j}^{(r)} )
		\quad 
		\mathrm{s.t.} 
		\quad \#\supp(\bar \pi^{(r)}) \leq \tilde M_{r-1} + n_r -1,
		\\
		& \qquad\text{where for each } j,\; \bar m_j^{(r-1)} = \tilde m_k^{(r-1)} \text{ for some } k \in \{1, \dots, \tilde M_{r-1} \} \\
		& \qquad\text{and } \tilde x_{r, j}^{(r)} = x_l^r\in \supp(\mu^r) \text{ for some } l\in \{ 1, \dots, n_r \} \\
		\tilde \pi ^{(r)} 
		&\coloneqq 
		\sum_{j=1}^{\tilde M_{r}} \bar \pi^{(r)}_j 
		\delta ( \tilde x_{1,j}^{(r)}, \dots, \tilde x_{r,j}^{(r)}), \quad\text{where } (\tilde x_{1,j}^{(r)}, \dots, \tilde x_{r-1,j}^{(r)}) \text{ corresponds to } \bar m_j^{(r-1)} \\
		&\qquad \text{ via } M_{\bar\lambda_{r-1}}(\tilde x_{1,j}^{(r)}, \dots, \tilde x_{r-1,j}^{(r)}) = \bar m_j^{(r-1)}
		\end{align*}
		\EndFor
		\State \textbf{Output:} $\tilde \pi \coloneqq \tilde\pi^{(N)}$
		\caption{Greedy algorithm}
		\label{alg:greedy}
	\end{algorithmic}
\end{algorithm}

Since 
$$
\bar \lambda_{i,r} = (1- \bar \lambda_{r,r} ) \bar \lambda_{i,r-1}, \quad i=1,\ldots,r-1
$$
we get
\begin{align*}
\tilde \nu^{(r)}
&= 
\sum_{j=1}^{\tilde M_{r}} \bar \pi^{(r)}_j 
\delta ( \bar \lambda_{1,r} \tilde x_{1,j}^{(r)}+ \dots+ \bar \lambda_{r-1,r}\tilde x_{r-1,j}^{(r)} 
+  \bar \lambda_{r,r} \tilde x_{r,j}^{(r)} )\\
&= 
\sum_{j=1}^{\tilde M_{r}} \bar \pi^{(r)}_j 
\delta ( (1- \bar \lambda_{r,r}) \bar \lambda_{1,r-1} \tilde x_{1,j}^{(r)}+ \dots
+ (1- \bar \lambda_{r,r})\bar \lambda_{r-1,r-1}\tilde x_{r-1,j}^{(r)} 
+  \bar \lambda_{r,r}\tilde x_{r,j}^{(r)} )		\\
&= 
\sum_{j=1}^{\tilde M_{r}} \bar \pi^{(r)}_j 
\delta ( (1- \bar \lambda_{r,r}) \bar m_j^{(r-1)} 
+  \bar \lambda_{r,r}\tilde x_{r,j}^{(r)} ).
\end{align*}
In other words, we alternate computing a two-marginal OT plan between $\tilde\nu^{(r-1)}$ and $\mu^r$ and an appropriately weighted so-called McCann-interpolation between them, which is optimal for two measures.
Otherwise, only some bookkeeping of indices is required to obtain an approximate MOT plan.

First, we show that the proposed procedure produces a plan with sparse support. 

\begin{proposition}
	Let $\tilde\pi$ be computed by Algorithm~\ref{alg:greedy}. 
	Then it holds
	\[
	\#\supp(\tilde\pi) \leq \sumitoN n_i- N + 1.
	\]
\end{proposition}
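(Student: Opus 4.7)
The plan is to prove the bound by induction on the iteration index $r = 1, \dots, N$, establishing that $\tilde M_r \coloneqq \#\supp(\tilde\pi^{(r)}) \leq \sum_{i=1}^r n_i - r + 1$; specializing to $r=N$ then gives the claim. The base case $r=1$ is immediate since $\tilde\pi^{(1)} = \mu^1$ has exactly $n_1$ support points, which matches the bound.

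For the inductive step, the crucial recursive estimate is $\tilde M_r \leq \tilde M_{r-1} + n_r - 1$. This comes from the sparsity property of two-marginal optimal transport that is already built into the algorithm: since $\tilde\nu^{(r-1)} = (M_{\bar\lambda_{r-1}})_\# \tilde\pi^{(r-1)}$ has at most $\tilde M_{r-1}$ atoms and $\mu^r$ has $n_r$ atoms, one may select $\bar\pi^{(r)}$ as a sparse OT plan with $\#\supp(\bar\pi^{(r)}) \leq \#\supp(\tilde\nu^{(r-1)}) + n_r - 1 \leq \tilde M_{r-1} + n_r - 1$, cf.\ \cite[Prop.~3.4]{PC19book}. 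In the generic situation where the means $\tilde m_k^{(r-1)}$ are pairwise distinct, the lift $\tilde\pi^{(r)}$ simply replaces each atom $(\bar m_j^{(r-1)}, \tilde x_{r,j}^{(r)})$ of $\bar\pi^{(r)}$ with the unique preimage tuple $(\tilde x_{1,j}^{(r)}, \dots, \tilde x_{r,j}^{(r)})$, so $\tilde M_r = \#\supp(\bar\pi^{(r)})$. Telescoping the recursion then yields $\tilde M_r \leq \bigl(\sum_{i=1}^{r-1} n_i - (r-1) + 1\bigr) + n_r - 1 = \sum_{i=1}^{r} n_i - r + 1$, closing the induction.

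The main and only real obstacle is the lifting step in the degenerate case when several atoms of $\tilde\pi^{(r-1)}$ share a common mean $\bar m$: then the correspondence between atoms of $\bar\pi^{(r)}$ above $\bar m$ and atoms of $\tilde\pi^{(r)}$ is no longer injective, and mass must be redistributed. I would handle this by viewing the lift at each fixed mean $\bar m$ as yet another two-marginal transport between the $p(\bar m)$ atoms of $\tilde\pi^{(r-1)}$ mapping to $\bar m$ and the $q(\bar m)$ atoms of $\bar\pi^{(r)}$ sitting above $\bar m$, and then choosing a sparse version whose support has at most $p(\bar m) + q(\bar m) - 1$ elements. Summing over the distinct means,
\[
\tilde M_r \leq \sum_{\bar m}\bigl(p(\bar m)+q(\bar m)-1\bigr) = \tilde M_{r-1} + \#\supp(\bar\pi^{(r)}) - \#\supp(\tilde\nu^{(r-1)}) \leq \tilde M_{r-1} + n_r - 1,
\]
so the recursive estimate and hence the induction still go through in full generality.
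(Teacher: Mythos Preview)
Your proof is correct and follows essentially the same inductive argument as the paper: establish the recursion $\tilde M_r \leq \tilde M_{r-1} + n_r - 1$ from the sparsity of the two-marginal plan $\bar\pi^{(r)}$ and then telescope. Your additional treatment of the degenerate case (collisions among the means $\tilde m_k^{(r-1)}$) is in fact more careful than the paper, which simply writes $\#\supp(\tilde\pi^{(r)}) = \#\supp(\bar\pi^{(r)})$ and implicitly treats the lift as a bijection on atoms.
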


\begin{proof}
	By construction, we have
	\[
	\#\supp (\tilde \pi^{(r)})
	= \# \supp(\bar\pi^{(r)})
	\leq \tilde M_{r-1} + n_r -1
	= \#\supp (\tilde \pi^{(r-1)}) + n_r - 1
	\]
	for all $r=2, \dots, N$.
	Thus, we get inductively
	\[
	\#\supp (\tilde \pi)
	\leq \#\supp (\tilde \pi^{(N-1)}) + n_N - 1
	\leq \sum_{i=1}^{N-1} n_{i} - (N-1) + 1 + n_N - 1
	= \sum_{i=1}^N n_i - N +1.
	\]
\end{proof}

Again, we would like to analyze the relative error $\Phi(\tilde\pi) / \Phi(\hat\pi)$.
We start with an upper bound.

\begin{theorem}\label{thm:greedy-upper}
	Let $\lambda_1\geq \dots \geq \lambda_N$ and
	$\tilde \pi$ be computed by Algorithm~\ref{alg:greedy}.
	Then it holds
	\begin{equation}\label{eq:greedy_upper}
	\frac{\Phi(\tilde\pi)}{\Phi(\hat\pi)} \leq \frac 1 3 (2N^2 - 5).
	\end{equation}
\end{theorem}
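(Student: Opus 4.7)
My plan is to derive an exact telescoping identity for $\Phi(\tilde\pi)$ from the greedy construction, bound the resulting terms by pairwise Wasserstein distances, and compare with the lower bound $\Phi(\hat\pi)\ge\sum_{s<t}\lambda_s\lambda_t\Wtwo^2(\mu^s,\mu^t)$ from \eqref{eq:2-wass}. Write $\Lambda_r:=\sum_{i=1}^r\lambda_i$ and $A_r:=\langle c_{\mathrm{MOT}}^{(r)},\tilde\pi^{(r)}\rangle$ with $c_{\mathrm{MOT}}^{(r)}(x_1,\ldots,x_r):=\sum_{s<t\le r}\lambda_s\lambda_t\|x_s-x_t\|^2$.

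The first step is the recursion
\begin{equation*}
A_r=A_{r-1}\,\frac{\Lambda_r}{\Lambda_{r-1}}+\lambda_r\Lambda_{r-1}\,\Wtwo^2(\tilde\nu^{(r-1)},\mu^r).
\end{equation*}
Splitting $c_{\mathrm{MOT}}^{(r)}=c_{\mathrm{MOT}}^{(r-1)}+\lambda_r\sum_{i<r}\lambda_i\|x_i-x_r\|^2$ and applying Lemma~\ref{lem:jensen_eq} with the renormalized weights $\bar\lambda_{r-1}$ and partial mean $m$ rewrites the new piece as $\Lambda_{r-1}\|m-x_r\|^2+\sum_{i<r}\lambda_i\|x_i-m\|^2$. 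Integrating against $\tilde\pi^{(r)}$: optimality of $\tilde\pi^{(r)}$ for $c_r$ in \eqref{eq:cost_alg_II} identifies the $\|m-x_r\|^2$ integral with $\Wtwo^2(\tilde\nu^{(r-1)},\mu^r)$, while the internal-variance integral equals $A_{r-1}/\Lambda_{r-1}$ by the analogue of \eqref{eq:Phi_2} with weights $\bar\lambda_{r-1}$. Dividing by $\Lambda_r$ telescopes (using $A_1=0$ and $\Lambda_N=1$) to
\begin{equation*}
\Phi(\tilde\pi)=\sum_{r=2}^{N}\frac{\lambda_r\Lambda_{r-1}}{\Lambda_r}\,\Wtwo^2(\tilde\nu^{(r-1)},\mu^r).
\end{equation*}

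The second step controls each $\Wtwo^2(\tilde\nu^{(r-1)},\mu^r)$ by pairwise distances. The induced coupling $\sum_k\tilde\pi_k^{(r-1)}\delta(\tilde m_k^{(r-1)},\tilde x_{i,k}^{(r-1)})\in\Pi(\tilde\nu^{(r-1)},\mu^i)$ yields $\sum_{i<r}\bar\lambda_{i,r-1}\Wtwo^2(\tilde\nu^{(r-1)},\mu^i)\le A_{r-1}/\Lambda_{r-1}^2$, and combining this with $\Wtwo(\tilde\nu^{(r-1)},\mu^r)\le\Wtwo(\tilde\nu^{(r-1)},\mu^i)+\Wtwo(\mu^i,\mu^r)$, averaged over $i<r$ with weights $\bar\lambda_{i,r-1}$ and squared via $(a+b)^2\le 2a^2+2b^2$, produces
\begin{equation*}
\Wtwo^2(\tilde\nu^{(r-1)},\mu^r)\le \frac{2A_{r-1}}{\Lambda_{r-1}^2}+2\sum_{i<r}\bar\lambda_{i,r-1}\Wtwo^2(\mu^i,\mu^r).
\end{equation*}
Feeding this back into the identity produces $A_r\le A_{r-1}(\Lambda_r+2\lambda_r)/\Lambda_{r-1}+2\lambda_r\sum_{i<r}\lambda_i\Wtwo^2(\mu^i,\mu^r)$, whose unrolling writes $\Phi(\tilde\pi)\le\sum_{s<t}\beta_{s,t}\lambda_s\lambda_t\Wtwo^2(\mu^s,\mu^t)$ with explicit coefficients $\beta_{s,t}$ built from products of $(\Lambda_{r'}+2\lambda_{r'})/\Lambda_{r'-1}$ for $r'>t$.

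The final step is to compare $\Phi(\tilde\pi)/\Phi(\hat\pi)\le\max_{s<t}\beta_{s,t}$ with the target $\tfrac13(2N^2-5)$. The ordering assumption $\lambda_1\ge\cdots\ge\lambda_N$ is decisive here: it forces $\lambda_r\le\Lambda_{r-1}/(r-1)$, so each factor $(\Lambda_{r'}+2\lambda_{r'})/\Lambda_{r'-1}\le(r'+2)/(r'-1)$ and the telescoping product is at most $N(N+1)(N+2)/[t(t+1)(t+2)]$. This is the main obstacle: naively unrolling yields a bound on $\max\beta_{s,t}$ that grows cubically in $N$, whereas the theorem claims a quadratic one. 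To close the gap one likely needs either a sharper bound on $\Wtwo^2(\tilde\nu^{(r-1)},\mu^r)$---for instance the $L^2$-triangle $\Wtwo(\tilde\nu^{(r-1)},\mu^r)\le\sqrt{A_{r-1}/\Lambda_{r-1}^2}+\bigl(\sum_i\bar\lambda_{i,r-1}\Wtwo^2(\mu^i,\mu^r)\bigr)^{1/2}$, which linearizes the recursion in $\sqrt{A_r}$---or a refined bookkeeping of the coefficients $\beta_{s,t}$ using the monotonicity of $(\lambda_i)$ to collect the contributions of each pair into a single quadratic in $N$.
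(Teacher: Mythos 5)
Your telescoping identity and the recursion $A_r\le A_{r-1}(\Lambda_r+2\lambda_r)/\Lambda_{r-1}+2\lambda_r\sum_{i<r}\lambda_i\Wtwo^2(\mu^i,\mu^r)$ are both correct --- in fact this is precisely the estimate \eqref{eq:greedy_w2_est} that the paper derives and uses to prove the \emph{randomized} bound of Theorem~\ref{thm:greedy-upper-rand}, where averaging over permutations tames the product of factors. But as you yourself observe, for the deterministic statement the unrolled coefficients $\beta_{s,t}$ contain the product $\prod_{r'>t}(1+3\lambda_{r'}/\Lambda_{r'-1})\le\prod_{r'>t}\frac{r'+2}{r'-1}=\frac{N(N+1)(N+2)}{t(t+1)(t+2)}$, which is cubic in $N$ for small $t$. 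This is a genuine gap, not a bookkeeping issue: the loss comes from comparing the numerator against the weak lower bound $\Phi(\hat\pi)\ge\sum_{s<t}\lambda_s\lambda_t\Wtwo^2(\mu^s,\mu^t)$, and neither the $L^2$-triangle variant nor a refined collection of the $\beta_{s,t}$ will recover a quadratic bound along this route, because the slack between $\sum\lambda_s\lambda_t\Wtwo^2(\mu^s,\mu^t)$ and $\Phi(\hat\pi)$ is itself unbounded.

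The paper's proof closes the gap by changing what the ratio is measured against. In the greedy step it bounds $\sum_j\tilde\pi_j\Vert\tilde m_j^{(N-1)}-\tilde x_{N,j}\Vert^2$ not via $\Wtwo^2(\mu^i,\mu^N)$ but via explicit competitor couplings $\pi^i\in\Pi(\tilde\nu^{(N-1)},\mu^N)$ that glue $\tilde\pi$ to the \emph{optimal} plan $\hat\pi$ through shared support points $x_j^i$ of $\mu^i$ (with weights $\tilde\pi_k\hat\pi_l/\mu_j^i$). Applying $(a+b)^2\le 2a^2+2b^2$ through $x_j^i$ then produces, besides the internal term $\sum_{i,j}\bar\lambda_i\tilde\pi_j\Vert\tilde x_{i,j}-\tilde m_j^{(N-1)}\Vert^2$, the quantities $\sum_j\hat\pi_j\Vert\hat x_{i,j}-\hat m_j^{(N-1)}\Vert^2$ and $\sum_j\hat\pi_j\Vert\hat m_j^{(N-1)}-\hat x_{N,j}\Vert^2$ --- i.e.\ exactly the two pieces into which $\Phi(\hat\pi)$ decomposes by the same identity \eqref{eq:inductive_split_costs} you derived for $\tilde\pi$. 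Dividing term by term yields the ratio recursion $g(N)\le 2+2\lambda_N+(1+2\lambda_N)\,g(N-1)$, and only here does the ordering assumption enter, via $\lambda_N\le 1/N$, giving $g(N)=\tfrac13(2N^2-5)$. So the missing idea is to compare $\Phi(\tilde\pi)$ against the recursive decomposition of $\Phi(\hat\pi)$ itself rather than against the pairwise-Wasserstein lower bound, using a coupling built from $\hat\pi$.
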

A linear lower bound is given in the next theorem.

\begin{theorem}\label{thm:greedy-lower}
	For each $N\geq 2$,
	and $\varepsilon>0$,
	there exist measures $\mu^1, \dots, \mu^N$ and $\lambda\in \Delta_N$, such that
	\[
	\frac{\Phi(\tilde\pi)}{\Phi(\hat\pi)}
	\geq \frac{N-H_N}{\frac{\pi^2}{6}+1} -\varepsilon
	\geq \frac 1 4 N - \frac 1 3,
	\]
	where $\tilde\pi$ denotes a plan computed by Algorithm~\ref{alg:greedy} and $H_N=\sumitoN 1/i$.
\end{theorem}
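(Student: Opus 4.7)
The plan is to prove the bound by an explicit construction. Since the paper establishes that Algorithms~1 and 2 are both exact in dimension $d = 1$, any lower-bound example must live in $\R^d$ with $d \ge 2$; I would work in $\R^2$ with uniform weights $\lambda_i = 1/N$. This choice is motivated by the identity $\bar\lambda_{r, r} = 1/r$ in Algorithm~\ref{alg:greedy}, which is the natural source of the harmonic number $H_N$ in the bound, and by the appearance of $\pi^2/6$ in the denominator, which points towards a Basel-type convergent series $\sum_{r\ge 1} r^{-2}$ controlling $\Phi(\hat\pi)$.

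Concretely, I would take $\mu^i = \tfrac12 \delta(p_i) + \tfrac12 \delta(q_i)$ for two-atom measures in $\R^2$ depending on a small parameter $\delta > 0$, with $p_i, q_i$ arranged on a curve so that: (a) pairing matched sides of the measures yields an optimal plan $\hat\pi$ whose cost decomposes via \eqref{eq:Phi_2} into a sum of terms decaying like $r^{-2}$, giving $\Phi(\hat\pi) \le \pi^2/6 + 1 + o(1)$ as $\delta \to 0$; and (b) at every step $r \ge 2$ the \emph{unique} optimal two-marginal transport from $\tilde\nu^{(r-1)}$ to $\mu^r$ is geometrically forced by the drift of $\tilde\nu^{(r-1)}$ to lock in an unfavorable pairing, whose contribution to $\Phi(\tilde\pi)$, after the second-moment decomposition in Lemma~\ref{lem:jensen_eq}, accumulates as $1 - 1/r - o(1)$. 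Summing $\sum_{r=2}^{N} (1 - 1/r) = N - H_N$ yields $\Phi(\tilde\pi) \ge N - H_N - o(1)$, and letting $\delta \to 0$ gives $\Phi(\tilde\pi)/\Phi(\hat\pi) \ge (N - H_N)/(\pi^2/6 + 1) - \varepsilon$.

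For the explicit linear bound $(N - H_N)/(\pi^2/6 + 1) \ge N/4 - 1/3$, use $\pi^2 < 10$ to get $\pi^2/6 + 1 < 8/3$, so it suffices to prove $9(N - H_N) \ge 6N - 8$, i.e.\ $3N + 8 \ge 9 H_N$. This holds at $N = 2$ since $14 \ge 13.5$, and inductively: assuming $3N + 8 \ge 9 H_N$, we get $3(N+1) + 8 = 3N + 11 \ge 9 H_N + 3 \ge 9 H_N + 9/(N+1) = 9 H_{N+1}$, using $N + 1 \ge 3$.

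The hard part will be the construction itself. Algorithm~\ref{alg:greedy} performs a globally optimal two-marginal OT at each step, so any "bad" pairing must in fact be locally optimal while being incompatible with the optimal multi-marginal alignment. The technical heart of the proof is engineering the positions of $p_i, q_i$ so that the running barycenter $\tilde\nu^{(r-1)}$ drifts exactly into the configuration that makes the next two-marginal OT commit to the wrong pairing, while simultaneously keeping the optimal MOT cost bounded by the convergent $\sum r^{-2}$-series. This is where the quadratic nature of the cost and the variance identity of Lemma~\ref{lem:jensen_eq} should conspire to produce the constant $\pi^2/6 + 1$ in the denominator.
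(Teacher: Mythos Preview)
Your plan is essentially the paper's approach: uniform weights $\lambda\equiv 1/N$, an adversarial construction in which each new measure is placed so that the greedy two-marginal step is forced---by an arbitrarily small margin---into the locally optimal but globally wrong pairing, with the update $\tilde m^{(r)} = \tfrac{r-1}{r}\tilde m^{(r-1)} + \tfrac1r \tilde x_r$ producing the $1/r$ decay responsible for both $H_N$ in the numerator and the Basel sum $\sum r^{-2}$ in the denominator. The one substantive implementation difference is that the paper does \emph{not} work with two-atom measures directly in $\R^2$: it first builds the example on the one-dimensional torus $\mathbb T$ with $M$ equispaced atoms per measure, so that by rotational symmetry the analysis collapses to a single orbit and the ``wrong'' nearest neighbor is always available via wraparound on the opposite side of the running barycenter. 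The bounds $|\tilde m_0^{(i)}|\le \tfrac{1}{i}\cdot\tfrac{\pi}{M}$ and the inductive estimate for $\Phi(\tilde\pi)$ are proved with torus arithmetic, and only at the end is everything pushed into $\R^2$ via $\gamma\mapsto(\cos\gamma,\sin\gamma)$, using that chord and arc lengths agree to first order as $M\to\infty$. Your two-atom sketch would have to manufacture the same alternating geometry without periodicity, which is exactly the ``hard part'' you flag; the torus device is the paper's answer to it. Your inductive verification of $3N+8\ge 9H_N$ is cleaner than the paper's cruder bound $H_N \le 1+\tfrac12+\tfrac{N-2}{3}$.
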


Finally, similar as in Section~\ref{sec:ref_alg}, we show that for $\lambda\equiv 1/N$ it is possible to reduce the worst case error by randomizing the order of the input measures.
\begin{theorem}\label{thm:greedy-upper-rand}
	Let $\mathbb E[\Phi(\tilde\pi)]$ be the expected value of the costs of Algorithm~\ref{alg:greedy} when it is run on the inputs $\lambda\equiv 1/N$ and $\mu^{\sigma(1)}, \dots, \mu^{\sigma(N)}$, where $\sigma$ is a permutation of length $N$ chosen uniformly at random.
	Then it holds for all $N\geq 2$, that
	\begin{equation}\label{eq:greedy_upper-rand}
	\frac{\mathbb E[\Phi(\tilde\pi)]}{\Phi(\hat\pi)}
	\leq \frac 1 {12}(11N-4-\frac 6 {N-1}) < N.
	\end{equation}
\end{theorem}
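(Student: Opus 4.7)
The plan is to adapt the deterministic analysis underlying Theorem~\ref{thm:greedy-upper} so that, after averaging over the uniformly random permutation $\sigma$, the contribution of each pair of input measures becomes symmetric, which is what shaves the factor from $O(N^2)$ down to $O(N)$.

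First I would, for a \emph{fixed} ordering $\mu^{\sigma(1)},\ldots,\mu^{\sigma(N)}$, derive a deterministic estimate of the form
\[
\Phi(\tilde\pi)\ \le\ \frac{1}{N^2}\sum_{r=2}^{N} a_r \sum_{s=1}^{r-1} \Wtwo^2(\mu^{\sigma(s)},\mu^{\sigma(r)})
\]
for explicit coefficients $a_r$. To do so, I would exploit the identity valid for $\lambda\equiv 1/N$
\[
\Phi(\tilde\pi)\ =\ \frac{1}{N^2}\sum_{s<t}\langle c,(P_{s,t})_\#\tilde\pi\rangle ,
\]
which follows from \eqref{eq:Phi_2} and Lemma~\ref{lem:jensen_eq}. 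Each inner product $\langle c,(P_{s,t})_\#\tilde\pi\rangle$ is then controlled iteration by iteration: at step $r$, the algorithm produces $\tilde x_{r,\cdot}$ via an OT against the current interpolant $\tilde\nu^{(r-1)}$, and triangle-inequality estimates of the form $\|x-y\|^2\le 2\|x-\bar m\|^2+2\|\bar m-y\|^2$ combined with the Wasserstein triangle inequality $\Wtwo(\tilde\nu^{(r-1)},\mu^{\sigma(r)})\le\Wtwo(\tilde\nu^{(r-1)},\mu^{\sigma(s)})+\Wtwo(\mu^{\sigma(s)},\mu^{\sigma(r)})$, together with the McCann-interpolation structure of $\tilde\nu^{(r)}$, allow the iteration-$r$ OT cost to be bounded by a linear combination of $\Wtwo^2(\mu^{\sigma(s)},\mu^{\sigma(r)})$ for $s<r$. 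This recursive bounding is precisely the step that should already appear in the proof of Theorem~\ref{thm:greedy-upper}.

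Next, I would take expectation over the uniformly random permutation $\sigma\in S_N$. By symmetry, every unordered pair $\{i,j\}$ is equally likely to occupy a given pair of positions, so
\[
\mathbb{E}_\sigma\!\left[\sum_{s=1}^{r-1}\Wtwo^2(\mu^{\sigma(s)},\mu^{\sigma(r)})\right]\ =\ \frac{2(r-1)}{N(N-1)}\sum_{s<t}\Wtwo^2(\mu^s,\mu^t).
\]
Combined with the lower bound $\Phi(\hat\pi)\ge \frac{1}{N^2}\sum_{s<t}\Wtwo^2(\mu^s,\mu^t)$ from \eqref{eq:2-wass}, this yields
\[
\frac{\mathbb{E}[\Phi(\tilde\pi)]}{\Phi(\hat\pi)}\ \le\ \frac{2}{N-1}\sum_{r=2}^{N}(r-1)\,a_r .
\]
It then remains to plug in the explicit $a_r$ from the first step and verify, by a direct summation, that the right-hand side simplifies to $\tfrac{1}{12}(11N-4-6/(N-1))$. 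The inequality $<N$ then follows by a one-line algebraic manipulation.

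The main obstacle will be the first step: pinning down the right coefficients $a_r$ that (i) actually hold for every fixed permutation and (ii) produce the specific constant $11/12$ in the final bound. Loose triangle-inequality splittings tend to double-count contributions and would give the much larger constant from Theorem~\ref{thm:greedy-upper}; the key is to exploit that only the iteration-$r$ OT is genuinely new at step $r$, so that coordinates $s<r$ already sit in the geometry of $\tilde\nu^{(r-1)}$, and to use the exact McCann-interpolation identities $\Wtwo(\tilde\nu^{(r)},\tilde\nu^{(r-1)})=\tfrac{1}{r}\Wtwo(\tilde\nu^{(r-1)},\mu^{\sigma(r)})$ rather than crude triangle bounds to prevent blow-up.
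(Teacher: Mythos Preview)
Your high-level strategy is the same as the paper's: obtain a deterministic per-permutation bound on $\Phi(\tilde\pi)$ in terms of pairwise Wasserstein distances, average over $\sigma$ to symmetrize, and divide by the lower bound \eqref{eq:2-wass}. The paper organises this as an induction on $N$ rather than an explicit unrolling into coefficients $a_r$, but the two are equivalent once one has the right one-step recursion.

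Where your plan is imprecise is exactly the step you flag as the obstacle. Two concrete corrections. First, the recursive estimate you need does \emph{not} come from the proof of Theorem~\ref{thm:greedy-upper}: there the last-step OT cost is compared to couplings built from the \emph{optimal} plan $\hat\pi$, producing $\hat\pi$-dependent terms, not pairwise $\Wtwo^2$'s. What the paper does here instead is compare the last-step OT cost $\sum_j\tilde\pi_j\|\tilde m_j^{(N-1)}-\tilde x_{N,j}\|^2$ to couplings $\pi^i\in\Pi(\tilde\nu^{(N-1)},\mu^N)$ built by routing through the optimal two-marginal plan $\pi^{i,N}\in\Pi(\mu^i,\mu^N)$, then splitting $\|\tilde m_j^{(N-1)}-x^N_k\|^2\le 2\|\tilde m_j^{(N-1)}-\tilde x_{i,j}\|^2+2\|\tilde x_{i,j}-x^N_k\|^2$ and averaging over $i$ with weights $\bar\lambda_i$. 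Combined with the exact decomposition \eqref{eq:inductive_split_costs}, this gives the clean recursion
\[
\Phi(\tilde\pi)\ \le\ (1-\tfrac1N)(1+\tfrac2N)\,\Phi(\tilde\pi^{(N-1)})\ +\ \frac{2}{N^2}\sum_{i=1}^{N-1}\Wtwo^2(\mu^{\sigma(i)},\mu^{\sigma(N)}).
\]
Your suggested route via the Wasserstein triangle inequality and separate control of each $\langle c,(P_{s,t})_\#\tilde\pi\rangle$ would not produce this aggregate form and would over-count.

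Second, if you unroll the recursion above down to $r=2$ you must use the \emph{exact} base case $\Phi(\tilde\pi^{(2)})=\tfrac14\Wtwo^2(\mu^{\sigma(1)},\mu^{\sigma(2)})$, not the recursive bound (which loses a factor of $2$ there); otherwise your summed $a_r$'s will not reproduce the constant $\tfrac1{12}(11N-4-\tfrac{6}{N-1})$. The paper avoids this subtlety by arguing inductively: it posits $\mathbb E[\Phi(\tilde\pi)]\le f(N)\,N^{-2}\sum_{s<t}\Wtwo^2(\mu^s,\mu^t)$, conditions on $\sigma(N)$ (uniform over $\{1,\dots,N\}$), applies the induction hypothesis to the remaining uniformly random $(N-1)$-permutation, and checks the resulting identity for $f$.
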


The proofs can be found in the appendix.
Although the given upper bounds \eqref{eq:greedy_upper} and \eqref{eq:greedy_upper-rand} are quadratic and linear in $N$, we conjecture that they can be improved.

Although in our experience, the greedy algorithm usually performs better than the reference algorithm, see e.g. Section~\ref{sec:ellipses}, we conclude this section by giving an example of $N=4$ two-point-measures, where one algorithm or the other is better, only depending on the order of the input measures.
\begin{example}
	Consider
	\[
	x_1 = (-1, \frac 5 8),\quad
	x_2 = (0, \frac 5 8),\quad
	x_3 = (1, \frac 5 8),\quad
	x_4 = -x_1,\quad
	x_5 = -x_2,\quad
	x_6=-x_3
	\]
	and
	\[
	\nu^1 = \frac 1 2 (\delta(x_1)+\delta(x_4)), \quad
	\nu^2 = \frac 1 2 (\delta(x_2)+\delta(x_5)), \quad
	\nu^3 = \frac 1 2 (\delta(x_3)+\delta(x_6)),
	\]
	with $\lambda\equiv \frac 1 4$.
	\begin{figure}[htb]
		\renewcommand\curwidth{.7\textwidth}
		\centering
		\includegraphics[width=\curwidth]{}
		\caption{Sketch of the example showing that neither Algorithm~\ref{alg:ref} nor Algorithm~\ref{alg:greedy} is always better. Next to the names of the support points are written the measures to whose support they belong. The two different colors indicate two different orderings. The greedy algorithm performs better for the measures written in red, whereas the reference algorithm performs better for the measures written in blue. The diagonal lines indicate the distances on the left hand side in \eqref{eq:neither_better_step_1} and \eqref{eq:neither_better_step_2}, respectively.}
		\label{fig:neither_is_better}
	\end{figure}
	By elementary computations, for $(\mu_1, \mu_2, \mu_3, \mu_4) = (\nu_1, \nu_2, \nu_2, \nu_3)$, the optimal plan is
	\[
	\hat\pi = \frac 1 2 (\delta(x_1, x_2, x_2, x_3) + \delta(x_4, x_5, x_5, x_6)),
	\]
	whereas for
	\[
	\tilde\pi = \frac 1 2 (\delta(x_1, x_2, x_2, x_6) + \delta(x_4, x_5, x_5, x_3)),
	\]
	we have $\Phi(\tilde\pi) > \Phi(\hat\pi)$.
	Further computations show that
	\begin{align}
		\Vert \frac 1 2 x_1 + \frac 1 2 x_2 - x_6\Vert^2 &< \Vert \frac 1 2 x_1 + \frac 1 2 x_2 - x_3\Vert^2, \quad \text{but} \label{eq:neither_better_step_1}\\
		\Vert \frac 1 3 x_1 + \frac 2 3 x_2 - x_6\Vert^2 &> \Vert \frac 1 3 x_1 + \frac 2 3 x_2 - x_3\Vert^2 \label{eq:neither_better_step_2}
	\end{align}
	such that by definition of the algorithms, for $(\mu_1, \mu_2, \mu_3, \mu_4) = (\nu_1, \nu_2, \nu_2, \nu_3)$, we have $\pi^{\text{greedy}}=\hat\pi$ and $\pi^{\text{ref}}=\tilde\pi$, whereas for the ordering $(\mu_1, \mu_2, \mu_3, \mu_4) = (\nu_2, \nu_1, \nu_3, \nu_2)$, we get $\pi^{\text{greedy}}=\tilde\pi$ and $\pi^{\text{ref}}=\hat\pi$.
	For an illustration of this example, see Figure~\ref{fig:neither_is_better}.
\end{example}

\subsection{Optimality for $d=1$}

In this section, we assume $d=1$, i.e., $\mu^1, \dots, \mu^N\in \mathcal P(\R)$.
Then we can show that the presented algorithms yield the optimal solution $\hat\pi$.
To this end, we first make the following definitions to characterize $\hat\pi$.
\begin{definition}
	For $x_k = (x_{1, k}, \dots, x_{N, k})$, $x_l = (x_{1, l}, \dots, x_{N, l})$, define the partial order
	\[
	x_k \preceq x_l \ratio\Leftrightarrow x_{i, k} \leq x_{i, l} \quad \text{for all} \quad i=1, \dots, N.
	\]
	A multi-marginal plan $\pi\in \Pi(\mu^1, \dots, \mu^N)$ is said to have the \emph{sorting property}, if $\preceq$ is a total order on $\supp(\pi)$.
\end{definition}
Next, we show that the sorting property is equivalent to optimality, see also \cite[Remark~9.6]{PC19book}.
This is a well-known result in the case $N=2$, see e.g. \cite[Lemma~2.8, Theorem~2.9]{S15otapplied}.
\begin{proposition}\label{prop:d1_characterization}
	A plan $\pi\in \Pi(\mu^1, \dots, \mu^N)$ has the sorting property if and only if it is optimal.
\end{proposition}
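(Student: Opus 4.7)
The plan is to exploit a feature special to one dimension: the monotone (quantile) coupling simultaneously minimizes every pairwise two-marginal transport cost with squared distance, which renders the lower bound \eqref{eq:2-wass} tight and reduces the equivalence to bookkeeping of pairwise marginals.

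First I would prove the direction ``sorting $\Rightarrow$ optimal.'' If $\pi$ has the sorting property, then for every pair $(s,t)$ the two-marginal projection $(P_{s,t})_\# \pi \in \Pi(\mu^s, \mu^t)$ inherits a monotone-nondecreasing support. By the classical one-dimensional two-marginal result \cite[Thm.~2.9]{S15otapplied}, monotone support is equivalent to optimality under the squared cost, so $\langle c, (P_{s,t})_\# \pi\rangle = \mathcal W_2^2(\mu^s, \mu^t)$. Writing $\Phi(\pi) = \sum_{s<t} \lambda_s \lambda_t \langle c, (P_{s,t})_\# \pi\rangle$ and summing yields $\Phi(\pi) = \sum_{s<t} \lambda_s \lambda_t \mathcal W_2^2(\mu^s, \mu^t)$, which matches the universal lower bound in \eqref{eq:2-wass}. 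Hence $\pi$ is optimal.

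For the converse, I would first construct one plan $\pi^\star \in \Pi(\mu^1,\dots,\mu^N)$ with the sorting property via the quantile construction: sort the atoms of each $\mu^i$ in increasing order and read them off simultaneously, splitting atoms at the common refinement of the cumulative-weight breakpoints when needed. By the forward direction, $\pi^\star$ is optimal, so any optimal $\pi$ satisfies $\Phi(\pi) = \Phi(\pi^\star) = \sum_{s<t} \lambda_s \lambda_t \mathcal W_2^2(\mu^s, \mu^t)$. Since \eqref{eq:2-wass} holds term by term, equality in the sum forces equality in every pair, so each $(P_{s,t})_\# \pi$ is two-marginal optimal and hence has monotone-nondecreasing support. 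Finally, if two tuples $x_k, x_l \in \supp(\pi)$ were incomparable, i.e.\ $x_{i,k} < x_{i,l}$ yet $x_{j,k} > x_{j,l}$ for some coordinates $i,j$, then $(x_{i,k}, x_{j,k})$ and $(x_{i,l}, x_{j,l})$ would both sit in $\supp((P_{i,j})_\# \pi)$ and violate that monotonicity, a contradiction.

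The main obstacle I anticipate is not any single deep argument but the careful construction of $\pi^\star$: merging the quantile representations of $N$ discrete measures with possibly incommensurate atom weights into a single multi-marginal plan requires splitting atoms at the common refinement of the cumulative-weight breakpoints, and one has to verify that the resulting object genuinely lies in $\Pi(\mu^1,\dots,\mu^N)$ and is sorted. Once that is in place, everything else is classical one-dimensional two-marginal theory wrapped around the lower bound \eqref{eq:2-wass}.
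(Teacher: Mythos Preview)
Your forward direction is the same as the paper's. For the converse, the paper takes a different, more direct route: it argues by contradiction via an explicit swap. Given two incomparable tuples in the support of an optimal $\hat\pi$, it exchanges the $N$-th coordinate between them to build a new admissible plan $\pi'$ and computes, using \eqref{eq:Phi_2} and Lemma~\ref{lem:jensen_eq}, that $\Phi(\hat\pi)-\Phi(\pi')>0$, contradicting optimality. No auxiliary plan $\pi^\star$ is constructed.

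Your approach is correct and arguably more conceptual: once the quantile plan $\pi^\star$ is in hand, the forward direction fixes the optimal value as $\sum_{s<t}\lambda_s\lambda_t\,\Wtwo^2(\mu^s,\mu^t)$, so any optimal $\pi$ must achieve equality termwise in \eqref{eq:2-wass}, forcing every pairwise projection to be two-marginal optimal and hence monotone; incomparability of two support tuples then contradicts monotonicity of the offending pairwise projection. The trade-off is that you need to build $\pi^\star$ carefully (the common refinement of cumulative breakpoints, as you note), whereas the paper's swap argument avoids that construction entirely at the price of a short explicit computation. Both arguments are self-contained and of comparable length; yours recycles the forward direction more efficiently, while the paper's is independent of it.
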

\begin{proof}
	Suppose $\pi\in \Pi(\mu^1, \dots, \mu^N)$ has the sorting property.
	Write
	\[
	\pi = \sum_{j=1}^M \pi_j \delta(x_{1, j}, \dots, x_{N, j}).
	\]
	Then for any $s<t$,
	\[
	\pi^{st} \coloneqq \sum_{j=1}^M \pi_j \delta(x_{s, j}, x_{t, j}) = (P_{s, t})_\# \pi
	\]
	clearly has the sorting property as well.
	For any $(x, y), (x', y')\in \pi^{st}$ with $x<x'$, it follows that $(x, y)\preceq (x', y')$, otherwise $\preceq$ would not be a total ordering.
	Thus, $x<x' \Rightarrow y\leq y'$, such that the condition from \cite[Lemma~2.8]{S15otapplied} is fulfilled and $\pi^{st}$ is optimal by \cite[Theorem~2.9]{S15otapplied}.
	Hence,
	\[
	\Phi(\pi)
	= \sum_{s<t}^N \lambda_s\lambda_t \sum_{j=1}^M \pi_j \Vert x_{s, j} - x_{t, j}\Vert^2
	= \sum_{s<t}^N \lambda_s\lambda_t \Wtwo^2(\mu^s, \mu^t)
	\leq \Phi(\hat\pi)
	\]
	by \eqref{eq:2-wass}, which shows optimality of $\pi$.
	
	The other direction is shown by contradiction: Let $\hat\pi$ be optimal and assume that there exist two tuples $\hat x_k$, $\hat x_l\in \supp(\hat\pi)$ with $\hat x_k\npreceq \hat x_l$ and $\hat x_l\npreceq \hat x_k$.
	That is, there exist $1\leq s, t\leq N$ such that $\hat x_{s, k} > \hat x_{s, l}$ and $\hat x_{t, l} > \hat x_{t, k}$.
	Set $\hat m_j = \sumitoNlambdai \hat x_{i, j}$ for all $j$ and assume without loss of generality that $\hat m_k\leq \hat m_l$ and $\hat x_{N, k}>\hat x_{N, l}$.
	Let $h\coloneqq \min(\hat\pi_k, \hat\pi_l)$ and define the coupling
	\[
	\pi' \coloneqq \hat\pi + h(\delta(x_k') + \delta(x_l') - \delta(\hat x_k) - \delta(\hat x_l)),
	\]
	where
	\begin{align*}
		x_k' &\coloneqq (x_{1, k}', \dots, x_{N-1, k}', x_{N, k}') \coloneqq (\hat x_{1, k}, \dots, \hat x_{N-1, k}, \hat x_{N, l}) \\
		x_l' &\coloneqq (x_{1, l}', \dots, x_{N-1, l}', x_{N, l}') \coloneqq (\hat x_{1, l}, \dots, \hat x_{N-1, l}, \hat x_{N, k}) \\
		m_j' &\coloneqq \sumitoNlambdai x_{i, j}', \quad j=k, l.
	\end{align*}
	Clearly, $\pi'$ fulfills the same marginal constraints as $\hat\pi$.
	By Lemma~\ref{lem:jensen_eq} and since $(\hat m_k - m_k')^2 = \lambda_N^2(\hat x_{N, k} - \hat x_{N, l})^2 > 0$ by assumption, it holds
	\[
	\sumitoNlambdai(x_{i, k}' - m_k')^2
	= -(\hat m_k-m_k')^2 + \sumitoNlambdai(x_{i, k}' - \hat m_k)^2
	< \sumitoNlambdai(x_{i, k}' - \hat m_k)^2,
	\]
	similarly for $l$.
	Thus, with \eqref{eq:Phi_2}, it holds
	\begin{align*}
	\frac 1 h (\Phi(\hat\pi) - \Phi(\pi')) &= \sumitoNlambdai (
	(\hat x_{i, k} - \hat m_k)^2
	+ (\hat x_{i, l} - \hat m_l)^2
	- (x_{i, k}' - m_k')^2
	- (x_{i, l}' - m_l')^2
	) \\
	&> \sumitoNlambdai (
	(\hat x_{i, k} - \hat m_k)^2
	+ (\hat x_{i, l} - \hat m_l)^2
	- (x_{i, k}' - \hat m_k)^2
	- (x_{i, l}' - \hat m_l)^2
	) \\
	&= \lambda_N(
	(\hat x_{N, k} - \hat m_k)^2
	+ (\hat x_{N, l} - \hat m_l)^2
	- (\hat x_{N, l} - \hat m_k)^2
	- (\hat x_{N, k} - \hat m_l)^2
	) \\
	&= -2\lambda_N(
	\hat x_{N, k}\hat m_k
	+ \hat x_{N, l}\hat m_l
	- \hat x_{N, l}\hat m_k
	- \hat x_{N, k}\hat m_l)
	\\
	&= -2\lambda_N\underbrace{(\hat x_{N, k} - \hat x_{N, l})}_{> 0}\underbrace{(\hat m_k-\hat m_l)}_{\leq 0} \geq 0,
	\end{align*}
	which contradicts the optimality of $\hat\pi$.
\end{proof}

\begin{proposition} If $d=1$, then $\Phi(\tilde\pi) = \Phi(\hat\pi)$, where $\tilde\pi$ is computed by Algorithm~\ref{alg:ref} or \ref{alg:greedy}.
\end{proposition}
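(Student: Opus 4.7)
The plan is to invoke Proposition~\ref{prop:d1_characterization} and reduce the problem to showing that $\tilde\pi$ has the sorting property. In both algorithms, every two-marginal transport plan that is computed lives in one dimension, so the only 1D OT fact I would use is that each such plan may be taken to be the monotone rearrangement: when its support pairs are listed by increasing first coordinate, the second coordinate is also non-decreasing (cf.\ \cite[Thm.~2.9]{S15otapplied}).

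For Algorithm~\ref{alg:ref}, each $\pi^i$, $i=2,\dots,N$, is such a monotone plan. I would take two output tuples $\tilde x^{(a)}=(x^1_{k_a},x_{2,j_2^a},\dots,x_{N,j_N^a})$ and $\tilde x^{(b)}=(x^1_{k_b},x_{2,j_2^b},\dots,x_{N,j_N^b})$ with $\tilde x^{(a)}$ created first, and split into the cases $k_a<k_b$ and $k_a=k_b$. In the first case, the lexicographic sort of $\pi^i$ places all pairs with first coordinate $x^1_{k_a}$ strictly before those with first coordinate $x^1_{k_b}$, and monotonicity of $\pi^i$ forces $x_{i,j_i^a}\le x_{i,j_i^b}$ for every $i\ge 2$. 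In the second case, both tuples are produced inside the same while-loop block, and the rule $j_i\leftarrow \min\{j:x_{1,i,j}=x^1_{k_a}\}$ combined with the lexicographic sort yields $j_i^a\le j_i^b$ and hence $x_{i,j_i^a}\le x_{i,j_i^b}$. In either case $\tilde x^{(a)}\preceq \tilde x^{(b)}$.

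For Algorithm~\ref{alg:greedy}, I would run induction on $r$ with the statement \emph{$\tilde\pi^{(r)}$ has the sorting property}. The base case $r=1$ is immediate. For the inductive step, Proposition~\ref{prop:d1_characterization} upgrades the inductive hypothesis to full MOT-optimality of $\tilde\pi^{(r-1)}$ for the first $r-1$ measures with weights $\bar\lambda_{r-1}$, and then Proposition~\ref{prop:means_unique} gives pairwise distinct means $\tilde m_k^{(r-1)}$. For two support tuples of $\tilde\pi^{(r)}$ whose prefixes are indexed by $k_a,k_b$, monotonicity of the 1D two-marginal plan $\bar\pi^{(r)}$ orders the pairs $(\tilde m_{k_a}^{(r-1)},\tilde x_{r,a}^{(r)})$ and $(\tilde m_{k_b}^{(r-1)},\tilde x_{r,b}^{(r)})$. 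I then lift this ordering back to the full $r$-tuples: if $k_a=k_b$, the prefixes coincide; if $k_a\ne k_b$, distinctness of the means together with the inductive sorting property forces the prefix of one tuple to dominate the other coordinate-wise, since the opposite domination would reverse the ordering of the means. Combining with the ordering of the last coordinate closes the induction.

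The main obstacle will be precisely this lifting step for Algorithm~\ref{alg:greedy}: converting the 2D sorting of $(\text{mean},\,x_r)$-pairs into coordinate-wise comparability of the full $r$-tuples. This is where I need \emph{both} the inductive sorting property of $\tilde\pi^{(r-1)}$ and the pairwise distinctness of its means, with the latter obtained from Proposition~\ref{prop:means_unique} only after routing the inductive hypothesis through Proposition~\ref{prop:d1_characterization} to get optimality; the induction cannot be closed without this double use of the hypothesis. Everything else is essentially bookkeeping around the 1D monotone rearrangement.
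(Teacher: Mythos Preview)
Your proposal is correct and, for Algorithm~\ref{alg:ref}, matches the paper's proof in all essentials: the paper phrases the argument as an induction over the number of while-loop iterations, you phrase it as a direct comparison of two output tuples, but the ingredients (lexicographic sort, monotonicity of each $\pi^i$, the $\min$-rule within a block) are identical.

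For Algorithm~\ref{alg:greedy} your argument is also correct but takes a slightly longer route than the paper. You obtain pairwise distinctness of the means $\tilde m_k^{(r-1)}$ by upgrading the inductive sorting property to optimality via Proposition~\ref{prop:d1_characterization} and then invoking Proposition~\ref{prop:means_unique}. The paper instead derives mean distinctness \emph{directly} from the inductive hypothesis: if the two prefixes differ, the sorting property makes them comparable, so one coordinate is strictly larger while all others are at least as large, and positivity of the weights $\bar\lambda_{i,r-1}$ immediately gives $\tilde m_{k_a}^{(r-1)}\ne \tilde m_{k_b}^{(r-1)}$. Thus your claim that ``the induction cannot be closed without this double use of the hypothesis'' is a bit too strong: a single application of the sorting property suffices, and the detour through Propositions~\ref{prop:d1_characterization} and~\ref{prop:means_unique} can be avoided entirely. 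Your route works, it just imports more machinery than needed.
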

\begin{proof}
	By Proposition~\ref{prop:d1_characterization}, it is enough to verify the sorting property of $\tilde\pi$ for each algorithm.
	
	Suppose that $\tilde\pi$ has been computed by Algorithm~\ref{alg:ref}.
	We show the assertion by induction over the number of while-loop-iterations adding to the support of $\tilde\pi$, which we denote by $l$.
	If $l=1$, there is nothing to show.
	Otherwise, assume as the induction hypothesis that
	\[
	(\tilde x_{1, 1}, \dots, \tilde x_{N, 1}) \preceq \dots \preceq (\tilde x_{1, l-1}, \dots, \tilde x_{N, l-1}).
	\]
	By definition of Algorithm~\ref{alg:ref}, $\tilde x_{1, 1}, \dots, \tilde x_{1, l-1}\leq \tilde x_{1, l}$.
	Further, since $\pi^i$ has the sorting property for all $i=2, \dots, N$ by  Proposition~\ref{prop:d1_characterization}, the lexicographical sorting
	\[
	(x_{1, i, 1}, x_{i, 1}) < (x_{1, i, 2}, x_{i, 2}) < \dots
	\]
	together with the choice
	\[
	j_i \gets \min\{ j: x_{1, i, j} = x^1_k \}
	\]
	in the previous iterations of Algorithm~\ref{alg:ref} ensures that we must have $\tilde x_{i, 1}, \dots, \tilde x_{i, l-1}\leq \tilde x_{i, l}$ for all $i=2, \dots, N$ as well.
	Thus,
	\[
	(\tilde x_{1, 1}, \dots, \tilde x_{N, 1}) \preceq \dots \preceq (\tilde x_{1, l}, \dots, \tilde x_{N, l}),
	\]
	completing the induction.
	
	Next, suppose $\tilde\pi$ has been computed by Algorithm~\ref{alg:greedy}.
	We show the statement by induction over $N$.
	The case $N=2$ is clear.
	For $N>2$, take $\tilde x_k = (\tilde x_{1, k}, \dots,\tilde x_{N, k})$, $\tilde x_l = (\tilde x_{1, l}, \dots, \tilde x_{N, l}) \in \supp(\tilde\pi)$.
	If $(\tilde x_{1, k}, \dots,\tilde x_{N-1, k}) = (\tilde x_{1, l}, \dots, \tilde x_{N-1, l})$, then either $\tilde x_k \preceq \tilde x_l$ or $\tilde x_l \preceq \tilde x_k$ follows directly.
	Otherwise, write $\tilde m^{(N-1)}_j = \sum_{i=1}^{N-1}\bar\lambda_{i, N-1}\tilde x_{i, j}$, $j=k, l$.
	Then we have $\tilde m^{(N-1)}_k \neq \tilde m^{(N-1)}_l$, since then $\tilde x_{i, k}\neq \tilde x_{i, l}$ for at least one $i\in \{1, \dots, N-1\}$ and by induction hypothesis it holds $\tilde x_{i, k}\leq \tilde x_{i, l}$ for all $i=1, \dots, N-1$ or $\tilde x_{i, k}\geq \tilde x_{i, l}$ for all $i=1, \dots, N-1$.
	Assume without loss of generality that $\tilde x_{i, k}\leq \tilde x_{i, l}$ for all $i=1, \dots, N-1$ so that $\tilde m^{(N-1)}_k < \tilde m^{(N-1)}_l$.
	It suffices to show that $\tilde x_{N, k}\leq \tilde x_{N, l}$.
	However, since two-marginal plan
	\[
	\bar\pi^{(N)} = \sum_j \bar\pi^{(N)}_j \delta(\tilde m_j^{(N-1)}, \tilde x_{N, j})
	\]
	in Algorithm~\ref{alg:greedy} has the sorting property by optimality, $\tilde m^{(N-1)}_k < \tilde m^{(N-1)}_l$ implies $\tilde x_{N, k}\leq \tilde x_{N, l}$.
\end{proof}

\section{Numerical Results}\label{sec:numerics}

We present a numerical comparison of different barycenter algorithms and, as applications, an interpolation between measures and textures, respectively.
To compute the two-marginal transport plans of the presented algorithms, we used the \texttt{emd} function of the Python-OT (POT 0.7.0) package \cite{flamary2021pot}, which is a wrapper of the network simplex solver from \cite{BPPH11networksimplex}, which, in turn, is based on an implementation in the LEMON C++ library.\footnote{\url{http://lemon.cs.elte.hu/pub/doc/latest-svn/index.html}}

\subsection{Ellipse Dataset}\label{sec:ellipses}

We compute Wasserstein barycenters with different algorithms for $\lambda\equiv 1/N$ and a data set of $N=10$ ellipses shown in Figure \ref{fig:ellipse_data}, which is originally from \cite{CD14fast} and has been commonly used as a benchmark example in the literature.
It is given as images of $60\times 60$ pixels.
We call the algorithms above ``Greedy'' and ``Reference'' below and compute $\tilde\nu=\push\tilde\pi$ of the resulting multi-marginal transport $\tilde\pi$ (without parallelization).\footnote{ \url{https://github.com/jvlindheim/mot}}
Further, we compute the barycenter using publicly available implementations for the methods \cite{JCG20debiased,GWXY19MAAIPM,LSPC2019frankwolfe}, called ``Debiased'', ``IBP'', ``Product'', ``MAAIPM'' and ``Frank--Wolfe'' below,\footnote{ \url{https://github.com/hichamjanati/debiased-ot-barycenters}}
the exact barycenter method from \cite{AB21fixedd} called ``Exact'' below,\footnote{\url{https://github.com/eboix/high_precision_barycenters}} and the method from \cite{LHXCJ20fastIBP} called ``FastIBP'' below.\footnote{ \url{https://github.com/tyDLin/FS-WBP}}
We also tried the BADMM\footnote{ \url{https://github.com/bobye/WBC_Matlab}} method from \cite{YWWL17badmm}, but since it did not converge properly, we do not consider it further.
While the fixed support methods receive the input as measures supported on $\{ 0, \dots, 59/60 \}\times \{ 0, \dots, 59/60 \}$, the free-support methods get the measures as a list of support positions and corresponding weights.
For all Sinkhorn methods, we used a parameter of $\varepsilon=0.002$ and otherwise chose the default parameters.
The order of the measures for the greedy and reference algorithm have been chosen according to the ordering in Figure~\ref{fig:ellipse_data}.
To compare the runtimes, we executed all codes on the same laptop with Intel i7-8550U CPU and 8GB memory running on Linux.
The Matlab codes were run in Matlab R2020a.
The runtimes of the Python codes are averages over several runs, as obtained by Python's \texttt{timeit} function.
The results are shown in Figure~\ref{fig:ellipse_barys} and Table~\ref{tab:ellipses}.

\renewcommand\curwidth{.9\textwidth}
\begin{figure}[htb]
	\centering
	\includegraphics[width=\curwidth]{}
	\caption{Data set of $10$ nested ellipses. }
	\label{fig:ellipse_data}
\end{figure}
\renewcommand\curwidth{.9\textwidth}
\begin{figure}[htb]
	\centering
	\includegraphics[width=\curwidth]{}
	\caption{Barycenters for data set in Figure~\ref{fig:ellipse_data} computed by different methods. The weight of a support point is indicated by its area in the plot.}
	\label{fig:ellipse_barys}
\end{figure}

\begin{table}
	\centering
	\begin{tabular}{lrrrrrc}
\toprule
{} & $\Psi(\tilde\nu)$ & $\Psi(\tilde\nu)/\Psi(\hat\nu)$ &  error &  runtime & error$\cdot$runtime & free support \\
\midrule
Exact       &           0.02666 &                          1.0000 & 0.0000 & 18187.67 &              0.0000 &       \cmark \\
Reference   &           0.02680 &                          1.0050 & 0.0050 &     \textbf{0.05} &              \textbf{0.0003} &       \cmark \\
Greedy      &           0.02669 &                          1.0012 & \textbf{0.0012} &     0.34 &              0.0004 &       \cmark \\
IBP         &           0.02723 &                          1.0214 & 0.0214 &     0.07 &              0.0016 &       \xmark \\
Debiased    &           0.02675 &                          1.0033 & 0.0033 &     1.19 &              0.0039 &       \xmark \\
Product     &           0.02688 &                          1.0082 & 0.0082 &    17.58 &              0.1440 &       \xmark \\
MAAIPM      &           0.02672 &                          1.0020 & 0.0020 &   158.51 &              0.3091 &       \xmark \\
FastIBP     &           0.02753 &                          1.0323 & 0.0323 &   111.03 &              3.5899 &       \xmark \\
Frank--Wolfe &           0.02870 &                          1.0763 & 0.0763 &    56.59 &              4.3168 &       \cmark \\
\bottomrule
\end{tabular}

	\caption{Numerical results for the ellipse barycenter problem. The error here is the absolute error defined as $1-\Psi(\tilde\nu)/\Psi(\hat\nu)$. The runtime is measured in seconds. The best scores of all approximative algorithms are highlighted in bold.}
	\label{tab:ellipses}
\end{table}

All methods achieve a relative error $\Psi(\tilde\nu)/\Psi(\hat\nu)$ well below $2$, which is a lot better than the worst case bounds shown above.
Whereas the greedy algorithm achieves the lowest error of all approximative algorithms, the reference algorithm achieves the lowest runtime.
Note that the support size of the $N-1$ two-marginal OT plans in the greedy algorithm is growing in each iteration, unlike as in the reference algorithm.
As another evaluation score, we computed the product of the error and the runtime, such that low scores in this metric indicate a good compromise between fast runtime and high precision.
Except for the exact method, which has no error but a very high runtime, the greedy and reference algorithms are the best with respect to this metric.
Notably, the FastIBP method is a lot slower than IBP whilst producing a more blurry result, which might indicate an implementation issue.
While the Frank--Wolfe method suffers from outliers, the support of most fixed-support methods is more extended than exact barycenter's support, since Sinkhorn-barycenters have dense support.
Altogether, the results of the proposed algorithms look promising.

\subsection{Multiple Different Sets of Weights}\label{sec:multiple_weights}

For this numerical application, we aim to interpolate between several given measures for multiple sets of weights $\lambda^k=(\lambda_1^k, \dots, \lambda_N^k)$, $\lambda^k\in \Delta_4$, $k=1, \dots, K$.
Encouraged by the precision results of the greedy algorithm in Section~\ref{sec:ellipses}, we apply it for this example, using two different approaches.
On the one hand, we can compute a $\tilde\pi^k$ and $\tilde\nu= (M_{\lambda^k})_\# \tilde\pi^k$ for each given $\lambda^k$, $k=1, \dots, K$ using Algorithm~\ref{alg:greedy}.
On the other hand, since this is the computational bottleneck, we can compute $\tilde\pi$ only once for e.g. $\lambda \equiv 1/N$ and then compute $\tilde\nu= (M_{\lambda^k})_\# \tilde\pi$ for all $k=1, \dots, K$.
We compare both approaches for a data set of four measures given as images of size $50\times 50$, for sets of weights that bilinearly interpolate between the four unit vectors.
The results are shown in Figures~\ref{fig:four_images_recompute}--\ref{fig:four_images_highlight}.
\begin{figure}[htb]
	\centering
	\includegraphics[width=.95\textwidth]{}
	\caption{Barycenters $\tilde\nu= (M_{\lambda^k})_\# \tilde\pi^k$ of $\mu^1, \dots, \mu^4$ for different weight sets $\lambda^k$, where Algorithm~\ref{alg:greedy} has been run once for each weight set $\lambda^k$. The measures $\mu^1, \dots, \mu^4$ are shown in the four corners.}
	\label{fig:four_images_recompute}
\end{figure}
\begin{figure}[htb]
	\centering
	\includegraphics[width=.95\textwidth]{}
	\caption{Computationally inexpensive barycenters $\tilde\nu= (M_{\lambda^k})_\# \tilde\pi$ of $\mu^1, \dots, \mu^4$ for different weight sets $\lambda^k$, where $\tilde\pi$ has been computed using Algorithm~\ref{alg:greedy} for $\lambda\equiv 1/N$.}
	\label{fig:four_images_greedy}
\end{figure}
\begin{figure}[htb]
\centering
\includegraphics[width=.95\textwidth]{}
\caption{Comparison of two interpolations as computed by the two different computational approaches, where ``recompute'' stands for an individual run of Algorithm~\ref{alg:greedy} for each weight set $\lambda^k$ and ``greedy'' stands for the approach using only one run of Algorithm~\ref{alg:greedy}.}
\label{fig:four_images_highlight}
\end{figure}

While the results from Figures~\ref{fig:four_images_recompute} and \ref{fig:four_images_greedy} look quite similar, we highlight small differences in the results in Figure~\ref{fig:four_images_highlight}.
In general, the interpolation from the recomputation approach looks a bit more smooth and has less artifacts.
The price to pay is of course a much higher computational cost, whereas for the second approach, only one run of Algorithm~\ref{alg:greedy} as a preparation is enough to be able to compute the other interpolations very fast.

\subsection{Texture Interpolation}

For another application, we lift the experiment of Section~\ref{sec:multiple_weights} from interpolation of measures in Euclidean space to interpolation of textures via the synthesis method from \cite{HLPR21gotex}, using their publicly available source code\footnote{\url{https://github.com/ahoudard/wgenpatex}}.
While the authors already interpolated between two different textures in that paper, requiring only the solution of a two-marginal optimal transport problem to obtain a barycenter, we can do this for multiple textures using Algorithm~\ref{alg:greedy}.
Briefly, the authors proposed to encode a texture as a collection of smaller patches $F_j$, where each, say, $4\times 4$-patch is encoded as a point $x_j\in \R^{16}$.
The texture is then modeled as a ``feature measure'' $\frac 1 M \sum_{j=1}^M\delta(x_j)\in \mathcal P(\R^{16})$,
such that this description is invariant under different positions of its patches within the image.
Finally, this is repeated for image patches at several scales $s$, obtaining a collection of measures $(\mu^s)$, $s=1, \dots, S$.
Synthesizing an image is done by optimizing an optimal transport loss between its feature measure and some reference measure (summing over $s$), as obtained, e.g., from a reference image.
Thus, the synthesized image tries to imitate the reference image in terms of its feature measures.
Here, we choose four texture images of size $256\times 256$ from the ``Describable Textures Dataset'' \cite{CMKMV14textures}.
We compute their feature measures $\mu^{1, s}, \dots, \mu^{4, s}$ for each scale.
Next, as in Section~\ref{sec:multiple_weights}, we compute their barycenters $\tilde\nu^{k, s}=(P_{\lambda^k})_\# \tilde\pi^{k, s}$ for all $k$ and $s$ and perform the image synthesis for each $k$ using the $\tilde\nu^{k, s}$ as feature measures to imitate.
The results are shown in Figure~\ref{fig:textures}.
Using this approach, one obtains visually pleasing interpolations between the four given textures.
\begin{figure}[htb]
	\centering
	\includegraphics[width=\curwidth]{}
	\caption{Interpolation of four different textures that are displayed in the four corners. The weight set for the barycenter computations performed for each image is shown above each synthesized image.}
	\label{fig:textures}
\end{figure}

\section{Discussion}\label{sec:discussion}
In this paper, after relating the MOT and barycenter problems with squared Euclidean cost, we introduced two approximative algorithms for them.
They are easy to implement, produce sparse solutions and are thus memory-efficient.
We analyzed both algorithms theoretically and validated their speed and precision in the numerical experiments.

In the future, we aim to close the gap between the upper and lower bound of the greedy algorithm.
Further, since \eqref{eq:non-splitting} is in general not met for suboptimal solutions $\tilde\pi$, as seen in Example~\ref{ex:ot_plans_cannot_be_read_off_of_mot}, this suggests another greedy iterative strategy that we would like to explore:
Given some $\tilde\pi$, for all $i$ cyclically, remove $\mu^i$ from $\tilde\pi$, i.e., form $(P_{1, \dots, i-1, i+1, \dots, N})_\#\tilde\pi$.
Now add $\mu^i$ back as in the greedy algorithm's last iteration, which produces a solution at least as good as before, and repeat until convergence.
For the reference algorithm, using reference measures other than $\mu^1$ is also possible, which could yield better theoretical or numerical results.
Finally, since two-marginal OT solvers like the simplex method handle arbitrary costs, we want to generalize the presented methods to MOT-problems with costs composed of other functions than $c$.

\section*{Acknowledgements}
Many thanks to Gabriele Steidl and Florian Beier for fruitful discussions.



\begin{appendix}
\section{Proofs}
We will present some examples below that show a lower bound to the relative error $\Phi(\tilde\pi)/\Phi(\hat\pi)$ of the presented algorithms, that is, examples where the algorithms perform badly.
These worst-case examples exploit the periodicity of the $1$-dimensional torus $\mathbb T$.
However, since we are interested in barycenters in the Euclidean space $\R^d$, we need the following lemma as a preparation.
It states that for ``enough periodic repetitions'' on the torus,
the examples also work for the corresponding $\R^2$-embeddings.

\begin{lemma}\label{lem:torus}
	Let $T:\mathbb T\to \R^2$, $\gamma\mapsto (\cos(\gamma), \sin(\gamma))$ 
	be the embedding of the torus into the two-dimensional Euclidean space.
	Then, for any $\alpha, \beta\in [0, \pi)$ with $\beta> \alpha$, it holds
	\[
	\lim_{s\to 0} \frac{\Vert T(s\alpha) - T(s\beta)\Vert}{s(\beta-\alpha)} = 1.
	\]
\end{lemma}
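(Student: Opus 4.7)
The plan is to reduce the two-norm of the chord $T(s\alpha) - T(s\beta)$ to a single trigonometric expression and then identify the ratio as the standard $\sin(x)/x$-limit. This is a straightforward computation and there is no real obstacle; the only care needed is bookkeeping the argument inside the sine.

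First I would expand
\[
\|T(s\alpha) - T(s\beta)\|^2 = (\cos(s\alpha)-\cos(s\beta))^2 + (\sin(s\alpha)-\sin(s\beta))^2
\]
and use the Pythagorean identity together with the angle-subtraction formula for cosine to collapse this to
\[
\|T(s\alpha) - T(s\beta)\|^2 = 2 - 2\cos\bigl(s(\beta-\alpha)\bigr).
\]
Next, I would apply the half-angle identity $2 - 2\cos(x) = 4\sin^2(x/2)$, which gives
\[
\|T(s\alpha) - T(s\beta)\| = 2\bigl|\sin\bigl(s(\beta-\alpha)/2\bigr)\bigr|.
\]
Since $\beta > \alpha$ and $s > 0$ (at least for $s$ small enough that $s(\beta-\alpha)/2 \in (0,\pi)$), the absolute value can be dropped.

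Finally, I would divide by $s(\beta-\alpha)$ and rewrite the quotient as
\[
\frac{\|T(s\alpha) - T(s\beta)\|}{s(\beta-\alpha)} = \frac{\sin\bigl(s(\beta-\alpha)/2\bigr)}{s(\beta-\alpha)/2}.
\]
The right-hand side tends to $1$ as $s \to 0$ by the classical limit $\lim_{x \to 0} \sin(x)/x = 1$, which yields the claim. The geometric interpretation is exactly that the chord length on the unit circle agrees with the arc length to leading order as the arc shrinks, so no further argument is required.
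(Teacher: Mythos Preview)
Your proof is correct and follows essentially the same route as the paper: expand the squared norm, collapse it to $2-2\cos(s(\beta-\alpha))$ via the cosine subtraction formula, apply the half-angle identity to obtain $2\sin\bigl(s(\beta-\alpha)/2\bigr)$, and conclude with the standard $\sin(x)/x\to 1$ limit. The only minor difference is that you explicitly justify dropping the absolute value on the sine, which the paper leaves implicit.
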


\begin{proof}
	We have 
	\begin{align*}
	\Vert T(s\alpha) - T(s\beta) \Vert
	&= 
	\Vert (\cos(s\beta ) - \cos(s\alpha), \sin(s\beta ) - \sin(s\alpha))\Vert \\
	&= 
	\sqrt{(\cos(s\beta ) - \cos(s\alpha))^2 + (\sin(s\beta ) - \sin(s\alpha))^2} \\
	&=
	\sqrt{2-2\cos(s\beta)\cos(s\alpha) - 2\sin(s\beta)\sin(s\alpha)}\\
	&= \sqrt{2-2\cos(s(\beta-\alpha))} 
	= 
	2\sin(\frac 1 2 s(\beta-\alpha)),
	\end{align*}
	where we have used the identity $1-\cos(x) = 2\sin^2(x/2)$ for the last equality.
	Since $\sin(x)/x\to 1$ for $x\to 0$, we finally get
	\begin{align*}
	\lim_{s\to 0} \frac{\Vert T(s\alpha) - T(s\beta)\Vert}{s(\beta-\alpha)}
	= \lim_{s\to 0}\frac{2\sin(\frac 1 2 s(\beta-\alpha))}{s(\beta-\alpha)}
	= \lim_{s\to 0}\frac{\sin(\frac 1 2 s(\beta-\alpha))}{\frac 1 2 s(\beta-\alpha)}
	= 1.
	\end{align*}
\end{proof}

\subsection{Reference Algorithm -- Lower Bound}
After the previous preparation, we prove Theorem~\ref{thm:ref-lower}.
\begin{proof}
	Set $\lambda\equiv 1/N$ and choose 
	\begin{align*}
	& x^1_1 = 0, \\
	& x^2_1 = x^4_1 = \dots = \frac 1 {1+\tilde\varepsilon}\cdot \frac{\pi}{M}, \\
	& x^3_1 = x^5_1 = \dots = -\frac 1 {1+\tilde\varepsilon}\cdot \frac{\pi}{M}, \\
	& x^i_j = x^i_1 + (j-1)\cdot \frac{2\pi}{M} \text{ for all } i=1, \dots, N, j =2,\dots, M
	\end{align*}
	for some $0<\tilde\varepsilon$ small enough, $1<M\in \mathbb N$, with slight abuse of notation (denoting here by $\pi$ the area of the unit circle).
	Finally, choose
	\[
	\mu_i= \frac 1 M \sum_{j=1}^M \delta(x^i_j) \text{ for all } i = 1, \dots, N.
	\]
	For a sketch of this example, see Figure~\ref{fig:worst_case_ref}.
	\begin{figure}[htb]
		\renewcommand\curwidth{.7\textwidth}
		\centering
		\includegraphics[width=\curwidth]{}
		\caption{Sketch of a worst-case example for the reference algorithm for the case $M=2$, i.e., for each transport plan, there is $M=2$ tuples. The green lines between points indicate the tuples of $\hat\pi$, whereas the yellow lines indicate the tuples chosen by Algorithm~\ref{eq:ref_alg}.}
		\label{fig:worst_case_ref}
	\end{figure}
	
	For the costs, we use the squared distance on the torus.
	First, assume $N$ to be odd.
	From Figure~\ref{fig:worst_case_ref}, it becomes clear what the multi-marginal plan $\tilde\pi$ chosen by the reference heuristic and the optimal plan $\hat\pi$ are:
	The optimal two-marginal couplings chosen by the reference heuristic are just coupling the points of $\mu_1$ with their nearest neighbors, so that we get
	\[
	\tilde\pi = \frac 1 M \sum_{j=1}^M \delta(x^1_j, \dots, x^N_j).
	\]
	On the other hand, setting $x^i_{M+1}=x^i_1$ for all $i=1, \dots, N$, consider the plan $\hat\pi$ as sketched in Figure~\ref{fig:worst_case_ref} given as
	\begin{equation}\label{eq:worst_case_ref_opt}
		\hat\pi = \frac 1 M \sum_{j=1}^M \delta(x^1_j, x^2_j, x^3_{j+1}, x^4_j.  \dots, x^N_{j+1})
	\end{equation}
	Next, we compute the costs $\Phi(\tilde\pi)$ and $\Phi(\hat\pi)$.
	By definition of $\Phi$, we have
	\begin{align*}
	\Phi(\tilde\pi) = \sum_{j=1}^M \frac 1 M \sum_{s<t} \frac 1 {N^2} \vert x^s_j - x^t_j\vert^2.
	\end{align*}
	Note that $\vert x^s_j - x^t_j\vert^2 = \vert x^s_1 - x^t_1\vert^2$ for all $j=2, \dots, M$ and all $s, t = 1, \dots, N$.
	Further, $\vert x^s_1 - x^t_1\vert^2 = 0$ whenever $s, t \geq 2$ and both are even or both are odd.
	Using this, and taking $N^2$ to the other side, we get
	\begin{align*}
	N^2 \Phi(\tilde\pi)
	&= \sum_{s<t}^N  \vert x^s_1 - x^t_1\vert^2
	= \sum_{i=2}^N \vert x^1_1 - x^i_1\vert^2 + \sum_{\substack{2\leq s< t \\ (s \,\mathrm{mod}\, 2) \neq (t\,\mathrm{mod}\, 2)}} \vert x^s_1 - x^t_1\vert^2 \\
	&= (N-1) \Big( \frac{\pi}{M} \Big)^2 \frac{1}{(1+\tilde\varepsilon)^2}
	+ \Big(\frac{N-1}{2}\Big)^2 \Big( \frac{\pi}{M} \Big)^2 \frac{2^2}{(1+\tilde\varepsilon)^2}
	= \Big( \frac{\pi}{M} \Big)^2 \frac{N(N-1)}{(1+\tilde\varepsilon)^2}.
	\end{align*}
	Similarly $\vert x^s_1-x^t_2\vert^2 = \vert x^s_j-x^t_{j+1}\vert$ for all $j=2, \dots, M$ and $s,t=1, \dots N$.
	Thus, from the coupling \eqref{eq:worst_case_ref_opt}, we get
	\begin{align*}
	N^2\Phi(\hat\pi)
	&\leq \sum_{2\leq i \text{ even}} \vert x^1_1 - x^i_1\vert^2
	+ \sum_{2\leq i \text{ odd}} \vert x^1_1 - x^i_1\vert^2
	+ \sum_{\substack{2\leq s< t \\ (s \,\mathrm{mod}\, 2) \neq (t\,\mathrm{mod}\, 2)}} \vert x^s_1 - x^t_2\vert^2 \\
	&= \Big( \frac{\pi}{M} \Big)^2\Big(\frac{N-1}{2}\Big(\Big( 1-\frac{\tilde\varepsilon}{1+\tilde\varepsilon} \Big)^2 + \Big(1+\frac{\tilde\varepsilon}{1+\tilde\varepsilon} \Big)^2 \Big)
	+  \Big(\frac{N-1}{2}\Big)^2\Big( \frac{2\tilde\varepsilon}{1+\tilde\varepsilon} \Big)^2
	\Big) \\
	&\leq \Big( \frac{\pi}{M} \Big)^2\Big((N-1)\Big( \frac{1+2\tilde\varepsilon}{1+\tilde\varepsilon} \Big)^2 + (N-1)^2 \frac{\tilde\varepsilon^2}{(1+\tilde\varepsilon)^2}\Big) \\
	&= \Big( \frac{\pi}{M} \Big)^2\frac{N-1}{(1+\tilde\varepsilon)^2}\Big( (1+2\tilde\varepsilon)^2 + \tilde\varepsilon^2(N-1) \Big).
	\end{align*}
	Thus we get
	\begin{align*}
	\frac{\Phi(\tilde\pi)}{\Phi(\hat\pi)}
	\geq \frac{N}{(1+2\tilde\varepsilon)^2 + \tilde\varepsilon^2(N-1)} \overset{\tilde\varepsilon\to 0}{\longrightarrow} N.
	\end{align*}
	Using the same example for even $N$, similarly we obtain
	\begin{align*}
		N^2 \Phi(\tilde\pi)
		= (N-1) \Big( \frac{\pi}{M} \Big)^2 \frac{1}{(1+\tilde\varepsilon)^2}
		+ \frac N 2\Big(\frac N 2 - 1\Big) \Big( \frac{\pi}{M} \Big)^2 \frac{2^2}{(1+\tilde\varepsilon)^2}
		= \Big( \frac{\pi}{M} \Big)^2 \frac{N(N-1)-1}{(1+\tilde\varepsilon)^2}
	\end{align*}
	and
	\begin{align*}
	N^2\Phi(\hat\pi)
	&= \Big( \frac{\pi}{M} \Big)^2\Big(\frac{N}{2}\Big( 1-\frac{\tilde\varepsilon}{1+\tilde\varepsilon} \Big)^2 + \Big(\frac N 2 - 1\Big)\Big(1+\frac{\tilde\varepsilon}{1+\tilde\varepsilon} \Big)^2
	+ \frac N 2\Big(\frac{N}{2}-1\Big)\Big( \frac{2\tilde\varepsilon}{1+\tilde\varepsilon} \Big)^2
	\Big) \\
	&\leq \Big( \frac{\pi}{M} \Big)^2\frac{1}{(1+\tilde\varepsilon)^2}\Big( (N-1)(1+2\tilde\varepsilon)^2 + N(N-2)\tilde\varepsilon^2 \Big),
	\end{align*}
	such that
	\begin{align*}
		\frac{\Phi(\tilde\pi)}{\Phi(\hat\pi)}
		\geq \frac{N(N-1)-1}{(N-1)(1+2\tilde\varepsilon)^2 - N(N-2)\tilde\varepsilon^2} \overset{\tilde\varepsilon\to 0}{\longrightarrow} N-\frac 1 {N-1}.
	\end{align*}
	
	Finally, we would like to obtain the statement for Euclidean distances as well.
	To this end, we embed the points from the torus into $\R^2$ using $T$ as defined in Lemma~\ref{lem:torus}.
	When calculating $\Phi(\tilde\pi)$ and $\Phi(\hat\pi)$ for the embedded example, the Euclidean distances can be rewritten as
	\[
	\Vert T(x^s_j)-T(x^t_l)\Vert^2 = \frac{\Vert T(x^s_j)-T(x^t_l)\Vert^2}{\vert x^s_j - x^t_l\vert^2} \vert x^s_j - x^t_l\vert^2,
	\]
	where the fraction goes to $1$ for $M\to\infty$ by Lemma~\ref{lem:torus}.
	Thus, for $M$ large enough, we get arbitrarily close to the result using the torus distances.
	Altogether, for $\tilde\varepsilon$ small enough and $M$ large enough, we obtain the desired result.
	
	The same example shows that \eqref{eq:ref_rand_bound} is asymptotically tight for growing $N$:
	Choose a sequence $(\varepsilon_N)$ with $\varepsilon_N / N \to 0$ for $N\to \infty$, and choose for each odd $N$ the example above with $\tilde\varepsilon = \varepsilon_N$.
	Thus, for each $N$, we get plans $\tilde\pi(N)$ and $\hat\pi(N)$.
	Since we choose $\mu_1$ with probability $1/N$, we get
	\begin{align*}
	\frac{\mathbb E [\Phi(\tilde \pi(N))]}{\Phi(\hat\pi(N))}
	\geq \frac 1 N (N-\varepsilon_N) + \frac{N-1}{N} \cdot 1
	\overset{N\to\infty}{\longrightarrow} 2.
	\end{align*}
	This concludes the proof.	
\end{proof}


\subsection{Greedy Algorithm -- Upper Bound}
Next, we prove Theorem~\ref{thm:greedy-upper} on the upper bound of the greedy algorithm's relative error $\Phi(\tilde\pi)/\Phi(\hat\pi)$.
\begin{proof}
	The proof is given by induction over $N$.
	For $N=2$, we have $\Phi(\tilde\pi)= \Phi(\hat\pi)$, so the statement is clear since the right-hand side of \eqref{eq:greedy_upper} is $1$ in this case.
	In the following, assume $N>2$.
	Let 
	\[
	\tilde \pi = \sum_{j=1}^{\tilde M} \tilde \pi_j \delta(\tilde x_{1, j}, \dots, \tilde x_{N, j}).
	\]
	In this proof, write $\bar \lambda_i = \bar \lambda_{i, N-1}$.
	By construction of the algorithm and the marginal constraints on $\tilde\pi$, the plan
	\[
	(P_{1, \dots, N-1})_\# \tilde\pi = \tilde \pi^{(N-1)} = \sum_{j=1}^{\tilde M_{r-1}} \tilde \pi_j \delta(\tilde x_{1, j}, \dots, \tilde x_{N-1, j})
	\]
	is the plan obtained by the algorithm before the last iteration.
	We denote by
	\[
	\tilde\nu^{(N-1)}
	= (M_{\bar\lambda})_\# \tilde\pi
	= \sum_{j=1}^{\tilde M_{r-1}} \tilde\pi_j \delta(\tilde m_j^{(N-1)}), \quad
	\tilde m_j^{(N-1)} = \sum_{i=1}^{N-1} \bar \lambda_i \tilde x_{i, j}
	\]
	the corresponding barycenter.
	Further, we make the same definitions for an optimal plan $\hat\pi$, only exchanging all tildes for hats above.
	Recall the notation $\mu^i = \sum_{j=1}^{n_i} \mu_j^i x_j^i$ from \eqref{eq:mu_def}.
	
	We construct the following, not necessarily optimal, couplings between $\tilde \nu^{(N-1)}$ and $\mu_N$, one for every $i=1, \dots, N-1$, as follows.
	Fix $i$ and write $\mu_N = \sum_l \hat \pi_l \delta(\hat x_{N, l})$, which is possible by the marginal constraints on $\hat\pi$.
	Then define
	\[
	\pi^i
	= \sum_{k, l} \pi_{k, l}^i \delta(\tilde m_k^{(N-1)}, \hat x_{N, l}), \quad
	\pi_{k, l}^i = \delta_{\tilde x_{i, k} = \hat x_{i, l} = x_j^i} \frac {\tilde \pi_{k} \hat \pi_{l}}{\mu^i_j},
	\]
	where $\delta$ in the last definition denotes the Kronecker delta.
	Written differently,
	\[
	\pi^i
	= \sum_{j=1}^{n_i} \sum_{\substack{k \\ \tilde x_{i, k} = x_j^i}}\sum_{\substack{l \\ \hat x_{i, l} = x_j^i}} \frac {\tilde \pi_{k} \hat \pi_{l}}{\mu^i_j}\delta(\tilde m_{k}^{(N-1)}, \hat x_{N, l}).
	\]
	That is, we couple the barycenter support point $\tilde m_{j}^{(N-1)}$ of the algorithm with all support points $\hat x_{N, l}$ of $\mu_N$
	that have an intersection in the $i$-th coordinate in their corresponding tuples of the multi-marginal plans $\hat\pi$ resp. $\tilde\pi$ they correspond to.
	We show that the marginal constraints $\pi^i\in \Pi(\tilde\nu^{(N-1)}, \mu^N)$ are met: Suppose $k$ is fixed, i.e., $\tilde x_{i, k} = x_j^i$ is fixed for some $j$, then
	\[
	\sum_l \pi^i_{k, l}
	= \sum_{\substack{l \\ \hat x_{i, l} = x_j^i}} \frac {\tilde \pi_{k} \hat \pi_{l}}{\mu^i_j}
	= \frac{\tilde \pi_{k}}{\mu^i_j}\sum_{\substack{l \\ \hat x_{i, l} = x_j^i}} \hat \pi_{l} = \tilde \pi_{k}.
	\]
	If $l$ is fix, i.e., $\hat x_{N, l}$ is fix, then $(\hat x_{1, l}, \dots, \hat x_{N, l})$ is fix and hence $\hat x_{i, l} = x_j^i$ is fix for some $j$.
	Thus
	\[
	\sum_k \pi^i_{k, l}
	= \sum_{\substack{k \\ \tilde x_{i, k} = x_j^i}} \frac {\tilde \pi_{k} \hat \pi_{l}}{\mu^i_j}
	= \frac{\hat \pi_{l}}{\mu^i_j}\sum_{\substack{k \\ \tilde x_{i, k} = x_j^i}} \tilde \pi_{k} = \hat \pi_{l}.
	\]
	Since the algorithm chooses the optimal coupling between $\tilde\nu^{(N-1)}$ and $\mu_N$ by construction, it holds
	\begin{align*}
	& \sum_{j=1}^M \tilde \pi_j \Vert \tilde m_j^{(N-1)} - \tilde x_{N, j} \Vert^2
	= \sum_{i=1}^{N-1} \bar\lambda_i \sum_{j=1}^M \tilde \pi_j \Vert \tilde m_j^{(N-1)} - \tilde x_{N, j} \Vert^2
	\leq \sum_{i=1}^{N-1} \bar\lambda_i \langle c, \pi^i \rangle \\
	=\;& \sum_{i=1}^{N-1} \bar\lambda_i \sum_{j=1}^{n_i} \sum_{\substack{k \\ \tilde x_{i, k} = x_j^i}}\sum_{\substack{l \\ \hat x_{i, l} = x_j^i}} \frac {\tilde \pi_{k} \hat \pi_{l}}{\mu^i_j} \Vert \tilde m^{(N-1)}_{k} - \hat x_{N, l}\Vert^2 \\
	\leq\;& 2\sum_{i=1}^{N-1}\bar\lambda_i  \sum_{j=1}^{n_i} \sum_{\substack{k \\ \tilde x_{i, k} = x_j^i}}\sum_{\substack{l \\ \hat x_{i, l} = x_j^i}} \frac {\tilde \pi_{k} \hat \pi_{l}}{\mu^i_j} ( \Vert \tilde m^{(N-1)}_{k} - x_j^i \Vert^2 + \Vert x_j^i- \hat x_{N, l}\Vert^2) \\
	=\;& 2 \sum_{i=1}^{N-1}\bar\lambda_i \sum_{j=1}^{n_i}\sum_{\substack{k \\ \tilde x_{i, k} = x_j^i}} \tilde \pi_{k}\Vert \tilde m^{(N-1)}_{k} - x_j^i \Vert^2 + 2 \sum_{i=1}^{N-1}\bar\lambda_i \sum_{j=1}^{n_i}\sum_{\substack{l \\ \hat x_{i, l} = x_j^i}} \hat \pi_{l}\Vert x_j^i- \hat x_{N, l}\Vert^2 \\
	=\;& 2 \sum_{i=1}^{N-1}\bar\lambda_i \sum_{j=1}^M \tilde \pi_{j}\Vert \tilde m^{(N-1)}_{j} - \tilde x_{i, j} \Vert^2 + 2 \sum_{i=1}^{N-1}\bar\lambda_i \sum_{j=1}^M \hat \pi_{j}\Vert \hat x_{i, j}- \hat x_{N, l}\Vert^2
	\end{align*}
	and using Lemma~\ref{lem:jensen_eq}, this equals
	\begin{align*}
	2 \sum_{ij} \bar \lambda_i \tilde \pi_j \Vert \tilde m^{(N-1)}_j - \tilde x_{i, j} \Vert^2 + 2\sum_{ij}\bar\lambda_i \hat \pi_j \Vert \hat x_{i, j} - \hat m_j^{(N-1)} \Vert^2 + 2\sum_{j=1}^M \hat \pi_j \Vert \hat m_j^{(N-1)} - \hat x_{N, j}\Vert^2.
	\end{align*}
	Next, we decompose the cost $\Phi(\tilde\pi)$ in terms of an induction hypothesis part and this new part.
	First note that, since $\tilde m_j = (1-\lambda_N) \tilde m_j^{(N-1)} + \lambda_N \tilde x_{N, j}$, we have
	\[
	\Vert \tilde m_j^{(N-1)} - \tilde m_j\Vert^2
	= \lambda_N^2 \Vert \tilde m_j^{(N-1)} - \tilde x_{N, j}\Vert^2.
	\]
	Then we compute, using \eqref{eq:Phi_2} and Lemma~\ref{lem:jensen_eq},
	\begin{align}
	\Phi(\tilde\pi)
	&= \sumitoNlambdai \sum_{j=1}^M\tilde \pi_j \Vert \tilde x_{i, j} - \tilde m_j\Vert^2
	= \sumitoNlambdai \sum_{j=1}^M\tilde \pi_j (\Vert \tilde x_{i, j} - \tilde m_j^{(N-1)}\Vert^2 - \Vert \tilde m_j^{(N-1)} - \tilde m_j\Vert^2) \nonumber \\
	&= -\lambda_N^2 \sum_{j=1}^M \tilde \pi_j \Vert \tilde m_j^{(N-1)} - \tilde x_{N, j}\Vert^2 + \lambda_N \sum_{j=1}^M \tilde \pi_j \Vert \tilde x_{N, j} - \tilde m_j^{(N-1)} \Vert^2 \nonumber\\
	& \quad + (1-\lambda_N)\sum_{i=1}^{N-1} \bar \lambda_i \sum_{j=1}^M \tilde \pi_j \Vert \tilde x_{i, j} - \tilde m_j^{(N-1)} \Vert^2 \nonumber\\
	&= (1-\lambda_N) \Big( \lambda_N \sum_{j=1}^M \tilde \pi_j \Vert \tilde x_{N, j} - \tilde m_j^{(N-1)} \Vert^2 + \sum_{i=1}^{N-1} \bar \lambda_i \sum_{j=1}^M \tilde \pi_j \Vert \tilde x_{i, j} - \tilde m_j^{(N-1)} \Vert^2\Big) \label{eq:inductive_split_costs}.
	\end{align}
	Note that the same calculation can be made for $\Phi(\hat\pi)$, again swapping all tildes for hats.
	Thus we get
	\begin{align*}
	&\frac{\Phi(\tilde\pi)}{\Phi(\hat\pi)}
	= \frac{\lambda_N \sum_j \tilde \pi_j \Vert \tilde x_{N, j} - \tilde m_j^{(N-1)} \Vert^2 + \sum_{i=1}^{N-1} \bar \lambda_i \sum_j \tilde \pi_j \Vert \tilde x_{i, j} - \tilde m_j^{(N-1)} \Vert^2}
	{\lambda_N \sum_j \hat \pi_j \Vert \hat x_{N, j} - \hat m_j^{(N-1)} \Vert^2 + \sum_{i=1}^{N-1} \bar \lambda_i \sum_j \hat \pi_j \Vert \hat x_{i, j} - \hat m_j^{(N-1)} \Vert^2} \\
	&\leq \resizebox{0.96\hsize}{!}{$\frac{2\lambda_N (\sum_j \hat \pi_j \Vert \hat m_j^{(N-1)} - \hat x_{N, j}\Vert^2
			+ \sum_{ij}\bar\lambda_i \hat \pi_j \Vert \hat x_{i, j} - \hat m_j^{(N-1)} \Vert^2)
			+ (1+2\lambda_N)\sum_{i, j} \bar \lambda_i \tilde \pi_j \Vert \tilde x_{i, j} - \tilde m_j^{(N-1)} \Vert^2}
		{\lambda_N \sum_j \hat \pi_j \Vert \hat x_{N, j} - \hat m_j^{(N-1)} \Vert^2 + \sum_{i, j} \bar \lambda_i \hat \pi_j \Vert \hat x_{i, j} - \hat m_j^{(N-1)} \Vert^2}$} \\
	&\leq \frac{2\lambda_N \sum_j \hat \pi_j \Vert \hat m_j^{(N-1)} - \hat x_{N, j}\Vert^2}{\lambda_N \sum_j \hat \pi_j \Vert \hat m_j^{(N-1)} - \hat x_{N, j}\Vert^2}
	+ \frac{2\lambda_N \sum_{i, j} \bar \lambda_i \hat \pi_j \Vert \hat x_{i, j} - \hat m_j^{(N-1)} \Vert^2}{\sum_{i, j} \bar \lambda_i \hat \pi_j \Vert \hat x_{i, j} - \hat m_j^{(N-1)} \Vert^2} \\
	& \quad + \frac{(1+2\lambda_N) \sum_{i, j} \bar \lambda_i \tilde \pi_j \Vert \tilde x_{i, j} - \tilde m_j^{(N-1)} \Vert^2}{\sum_{i, j} \bar \lambda_i \hat \pi_j \Vert \hat x_{i, j} - \hat m_j^{(N-1)} \Vert^2} \\
	&= 2 + 2\lambda_N + \frac{(1+2\lambda_N) \sum_{i, j} \bar \lambda_i \tilde \pi_j \Vert \tilde x_{i, j} - \tilde m_j^{(N-1)} \Vert^2}{\sum_{i, j} \bar \lambda_i \hat \pi_j \Vert \hat x_{i, j} - \hat m_j^{(N-1)} \Vert^2}.
	\end{align*}
	By induction hypothesis, this is at most
	\begin{align*}
	2+2\lambda_N + (1+2\lambda_N)\Big(\frac 2 3 (N-1)^2-\frac 5 3\Big),
	\end{align*}
	and because of $\lambda_N \leq \lambda_1, \dots, \lambda_{N-1}$, i.p. $\lambda_N\leq \frac 1 N$, this is at most
	\begin{align*}
	2+\frac 2 N + \Big(1+\frac 2 N \Big)\Big(\frac 2 3 (N-1)^2-\frac 5 3\Big).
	\end{align*}
	Finally, by elementary computations, this equals
	\[
	\frac 1 3 (2N^2 - 5),
	\]
	which concludes the  proof.
\end{proof}

\subsection{Randomized Greedy Algorithm -- Upper Bound}

Next, we prove Theorem~\ref{thm:greedy-upper-rand} on the upper bound of the greedy algorithm's expected relative error for $\lambda\equiv 1/N$ for a permutation of the input measures that is chosen uniformly at random.
The proof combines arguments from the proofs of Theorems~\ref{thm:ref-upper} and \ref{thm:greedy-upper} on the upper bounds of Algorithms~\ref{alg:ref} and \ref{alg:greedy}.
\begin{proof}
	First, we present the core idea, preparing the induction later on.
	Given some fixed
	\[
	\tilde \pi = \sum_{j =1}^{\tilde M} \tilde \pi_j \delta(\tilde x_{1, j}, \dots, \tilde x_{N, j}),
	\]
	we can write
	\[
	\mu^i = \sum_{j=1}^{\tilde M} \tilde \pi_j \delta(\tilde x_{i, j}) \quad \text{for all}\quad  i=1,\dots, N-1
	\quad\text{and}\quad
	\mu^N=\sum_{k=1}^{n_N} \mu_k^N\delta(x_k^N).
	\]
	Let
	\[
	\pi^{i, N} = \sum_{j, k} \pi^{i, N}_{j, k}\delta(\tilde x_{i, j}, x_k^N) \in \argmin_{\pi\in\Pi(\mu^i, \mu^N)}\langle c, \pi\rangle,
	\]
	such that
	\[
	\sum_{k=1}^{n_N} \pi^{i, N}_{j, k} = \tilde\pi_j \quad
	\text{and}\quad 
	\sum_{j=1}^{\tilde M}\pi^{i, N}_{j, k} = \mu_k^N.
	\]
	Then we set
	\[
	\pi^i \coloneqq \sum_{j, k} \pi^{i, N}_{j, k}\delta(\tilde m^{(N-1)}_j, x_k^N) \in \Pi(\tilde\nu^{(N-1)}, \mu^N).
	\]
	We use $\bar\lambda=(\bar\lambda_{1, N-1}, \dots, \bar\lambda_{N-1, N-1})$ as shorthand notation as above.
	With a similar computation as in the proof of Theorem~\ref{thm:greedy-upper}, we obtain
	\begin{align*}
	&\sum_{j=1}^M \tilde\pi_j \Vert \tilde m^{(N-1)}_j - \tilde x_{N, j}\Vert^2
	= \sum_{i=1}^{N-1}\bar\lambda_i\sum_{j=1}^M \tilde\pi_j \Vert \tilde m^{(N-1)}_j - \tilde x_{N, j}\Vert^2
	\leq \sum_{i=1}^{N-1}\bar\lambda_i \langle c, \pi^i\rangle \\
	=\;& \sum_{i=1}^{N-1}\bar\lambda_i\sum_{j, k} \pi^{i, N}_{j, k}\Vert \tilde m^{(N-1)}_j - x_k^N\Vert^2 
	\leq 2\sum_{i=1}^{N-1}\bar\lambda_i\sum_{j, k} \pi^{i, N}_{j, k}\Big(\Vert \tilde m^{(N-1)}_j -\tilde x_{i, j}\Vert^2 + \Vert \tilde x_{i, j}- x_k^N\Vert^2 \Big)\\
	=\;& 2 \sum_{i=1}^{N-1} \bar\lambda_i \sum_{j=1}^M \tilde\pi_j \Vert \tilde m^{(N-1)}_j - \tilde x_{i, j}\Vert^2 + 2\sum_{i=1}^{N-1}\bar\lambda_i\Wtwo^2(\mu^i, \mu^N).
	\end{align*}
	Inserting this estimate into \eqref{eq:inductive_split_costs},
	we get that
	\begin{align}
	\Phi(\tilde\pi)
	&\leq (1-\lambda_N)(1+2\lambda_N)\sum_{i=1}^{N-1} \bar\lambda_i\sum_{j=1}^M\tilde\pi_j\Vert \tilde m^{(N-1)}_j - \tilde x_{i, j}\Vert^2 + 2(1-\lambda_N)\lambda_N \sum_{i=1}^{N-1} \bar\lambda_i \Wtwo^2(\mu^i, \mu^N) \nonumber\\
	&=(1-\lambda_N)(1+2\lambda_N)\Phi(\tilde\pi^{(N-1)}) + 2\sum_{i=1}^{N-1}\lambda_i\lambda_N \Wtwo^2(\mu^i, \mu^N) \label{eq:greedy_w2_est},
	\end{align}
	where we abuse notation slightly, letting $\Phi(\tilde\pi^{(N-1)})$ be defined by \eqref{eq:mot} with
	\[
	c_\mathrm{MOT}(x_1, \dots, x_{N-1}) = \sum_{s<t}^{N-1}\bar\lambda_s\bar\lambda_t\Vert x_s-x_t\Vert^2.
	\]
	
	After this preparation, set
	\[
	f(N) \coloneqq
	\frac 1 {12}(11N-4-\frac 6 {N-1})
	\]
	to be the right hand side of \eqref{eq:greedy_upper-rand}.
	In the following, we let $\tilde\pi$ be a random variable depending on the random permutation $\sigma$.
	We show inductively that for all $N\geq 2$, it holds
	\[
	\mathbb E[\Phi(\tilde\pi)] \leq f(N)\sum_{s<t}^N\frac 1 {N^2} \Wtwo^2(\mu^s, \mu^t).
	\]
	Since $f(2)=1$, the statement is clear for $N=2$ by definition of Algorithm~\ref{alg:greedy}.
	For $N>2$, note that since the ordering $\sigma$ of the measures is chosen uniformly at random, the probability that $\sigma(N)=i$ is $1/N$ for any $i=1, \dots, N$.
	By the induction hypothesis and \eqref{eq:greedy_w2_est}, we therefore get that
	\begin{align*}
		\mathbb E[\Phi(\tilde\pi)]
		&\leq (1-\frac 1 N)(1+\frac 2 N) \mathbb E[\Phi(\tilde\pi^{(N-1)})]
		+ \sumitoN \frac 1 N\cdot  2\sum_{j\neq i}^N \frac 1 {N^2} \Wtwo^2(\mu^i, \mu^j) \\
		&\leq (1-\frac 1 N)(1+\frac 2 N)\sumitoN\frac 1 N\Big(  f(N-1)\sum_{\substack{s<t \\ s,t\neq i}}^N\frac{1}{(N-1)^2}\Wtwo^2(\mu^s, \mu^t) \Big)
		+ \frac 4 N \sum_{s<t}^N \frac 1 {N^2} \Wtwo^2(\mu^i, \mu^j).
	\end{align*}
	Note that
	\begin{align*}
		&\sumitoN \sum_{\substack{s<t \\ s,t\neq i}}^N \Wtwo^2(\mu^s, \mu^t)
		= \sumitoN \Big( \sum_{s<t}^N \Wtwo^2(\mu^s, \mu^t)- \sum_{j=1}^N \Wtwo^2(\mu^i, \mu^j)\Big) \\
		=\;&N\sum_{s<t}^N \Wtwo^2(\mu^s, \mu^t) - 2\sum_{s<t}^N \Wtwo^2(\mu^s, \mu^t)
		= (N-2)\sum_{s<t}^N \Wtwo^2(\mu^s, \mu^t).
	\end{align*}
	Thus,
	\[
	\mathbb E[\Phi(\tilde\pi)]
	\leq  \Big( (1-\frac 1 N)(1+\frac 2 N)(N-2)\frac 1 N  f(N-1)\Big(\frac{N}{N-1}\Big)^2 + \frac 4 N \Big) \sum_{s<t}^N \frac 1 {N^2}\Wtwo^2(\mu^s, \mu^t),
	\]
	with
	\begin{align*}
		(1-\frac 1 N)(1+\frac 2 N)(N-2) \frac 1 N f(N-1) \Big(\frac{N}{N-1}\Big)^2 + \frac 4 N 
		=\frac{(N-2)(N+2)}{N(N-1)}f(N-1) + \frac 4 N.
	\end{align*}
	By elementary computations, this is equal to $f(N)$, which concludes the induction.
	Finally, using \eqref{eq:2-wass}, we get
	\[
	\frac{\mathbb E[\Phi(\tilde\pi)]}{\Phi(\tilde\pi)}
	\leq \frac{f(N)\sum_{s<t}^N \frac 1 {N^2}\Wtwo^2(\mu^s, \mu^t)}{\sum_{s<t}^N \frac 1 {N^2}\Wtwo^2(\mu^s, \mu^t)}
	= f(N).
	\]
\end{proof}


\subsection{Greedy Algorithm -- Lower Bound}
Finally, we prove the lower bound given by Theorem~\ref{thm:greedy-lower}.
\begin{proof}
	As in the proof of Theorem~\ref{thm:ref-lower}, we give an example on the $1$-dimensional torus that will generalize to two-dimensional Euclidean space using Lemma~\ref{lem:torus}.
	We identify all $x\in \R$ that differ only by a multiple of $2\pi$ and denote by
	\begin{align*}
	s(x) \coloneqq 1_{(0, \pi]} - 1_{(\pi, 2\pi]}
	\end{align*}
	a ``sign function'' on the torus, where $1_A$ denotes the characteristic function on the set $A$.
	Further, we write
	\[
	\vert x\vert \coloneqq
	\begin{cases}
	x,& \quad x\in [0, \pi) \\
	2\pi-x,& \quad x\in [\pi,2\pi).
	\end{cases}
	\]
	Let $M\in \mathbb N$ be large and $\R_{>0}\supset (\varepsilon_i)\to 0$ be a sequence of small enough positive numbers.
	Define inductively for all $i=1, \dots, N$:
	\begin{align*}
	&\begin{cases}
	x_0^i = 0, \quad \tilde m_0^{(i)} = 0 & \text{for } i=1 \\
	x_0^i = \tilde m_0^{(i-1)} + \frac 1 2 \cdot\frac {2\pi}{M} + s(\tilde m_0^{(i-1)})\cdot \varepsilon_i\quad & \text{for } i>1
	\end{cases}
	\\
	&x_j^i = x_0^i + j\cdot \frac {2\pi}{M} \quad \text{for all } j=1, \dots, M-1 \\
	&\mu^i = \sum_{j=1}^M \frac 1 M \delta(x_j^i) \\
	\text{Compute}\quad  &\tilde\pi^{(i)} = \sum_{j=1}^M\frac 1 M \delta(\tilde x_{1, j}, \dots, \tilde x_{i, j}) \quad \text{and} \quad \tilde m_j^{(i)}=\sum_{k=1}^i \frac 1 i \tilde x_{k, j}\quad \text{using Algorithm } \ref{alg:greedy}. 
	\end{align*}
	We use the squared distances on the torus for the cost $c$ and choose $\lambda \equiv 1/N$.
	For an illustration of this example, see Figure~\ref{fig:worst_case_greedy}.
	\begin{figure}[htb]
		\renewcommand\curwidth{.7\textwidth}
		\centering
		\includegraphics[width=\curwidth]{}
		\caption{Sketch of the presented example for the greedy heuristic for the case $M=2$. Note that zero is north in the drawing and the positive direction is clock-wise. The points of $N=5$ measures are drawn in black, and the mean-points $\tilde m_j^{(i)}$ (up to the $i=4$) in red. The positions of the points were estimated by eye, precise enough to illustrate the principle. The red rectangles indicate the tuples created by Algorithm~\ref{alg:greedy}, whereas the green ellipses indicate the optimal tuples of $\hat\pi$. Already for such small $N$, we see that the points $x^i_j$ and $\tilde m_j^{(i)}$ are each clustering in a region shifted roughly by $\pi/M$ to each other.}
		\label{fig:worst_case_greedy}
	\end{figure}
	
	Note that for all $i=1, \dots, N$, $x^i_0$ and $x^i_{M-1}$ are the closest points to $\tilde m_0^{(i-1)}$ with almost the same distance to $\tilde m_0^{(i-1)}$.
	However, the last term in the definition of $x^i_0$ is chosen carefully so that whenever $s(\tilde m_0^{(i-1)}) = -1$, the point $x^i_0$, which fulfills $s(x^i_0) = 1$, is slightly closer to $\tilde m_0^{(i-1)}$.
	On the other hand, if $s(\tilde m_0^{(i-1)}) = 1$, the point $x^i_{M-1}$ with $s(x^i_{M-1})=-1$ is slightly closer to $\tilde m_0^{(i-1)}$.
	As a consequence, $\tilde\pi^{(i)} = \sum_{j=1}^M\frac 1 M \delta(\tilde x_{1, j}, \dots, \tilde x_{i, j})$ with $\tilde m_j^{(i)}=\sum_{k=1}^i \frac 1 i \tilde x_{k, j}$ will be so that $\tilde x_{i, 0} = x^i_0$ if $s(\tilde m_0^{(i-1)}) = -1$ and $\tilde x_{i, 0} = x^i_{M-1}$ if $s(\tilde m_0^{(i-1)}) = 1$.
	That is, we always have $s(\tilde x_{i, 0})\neq s(\tilde m_0^{(i-1)})$.
	Intuitively, the sign in front of $\varepsilon_i$ is chosen so that the algorithm takes the ``wrong choice'' between to almost equally good alternatives.
	
	Next, we estimate $\Phi(\hat\pi)$ and $\Phi(\tilde\pi)$ inductively from above and below, respectively.
	Set $\hat m_j = \sumitoN \frac 1 N \hat x_{i, j}$ and $m_j = \sumitoN \frac 1 N x^i_j$.
	Note that for $M$ large enough, $\vert x^i_0\vert$, $\vert x^i_{M-1}\vert$ and thus $\vert\tilde m_0^{(i)}\vert$ are small enough so that they are equal to the standard absolute value function around zero.
	We use the plan
	\[
	\pi=\sum_{j=1}^M \frac 1 M \delta(x_j^1, \dots, x_j^N)
	\]
	with $\Phi(\hat\pi)\leq \Phi(\pi)$ as a lower bound.
	By the symmetry, \eqref{eq:Phi_2} and Lemma~\ref{lem:jensen_eq} it holds
	\begin{align*}
	\Phi(\hat\pi)
	&\leq \Phi(\pi) = \sum_{i=1}^N\sum_{j=1}^M \frac 1 {NM}\vert x_j^i - m_j\vert^2
	= \sumitoN \frac 1 N \vert x^i_0 - m_0\vert^2 \\
	&= \sumitoN \frac 1 N (\vert x^i_0 - \frac 1 2 \frac {2\pi}M \vert^2 - \vert m_0-\frac 1 2 \frac {2\pi}M\vert^2)
	\leq \sumitoN \frac 1 N \vert x^i_0 - \frac \pi M \vert^2 \\
	&= \frac 1 N \Big( \frac\pi M \Big)^2 + \sum_{i=2}^N \frac 1 N \vert x^i_0 - \frac \pi M \vert^2,
	\end{align*}
	where the terms in the sum are equal to
	\begin{align*}
	&\sum_{i=2}^N \frac 1 N \vert \tilde m_0^{(i-1)} + \frac \pi M + s(\tilde m_0^{(i-1)})\cdot \varepsilon_i - \frac \pi M\vert^2
	= \sum_{i=2}^N \frac 1 N \vert \tilde m_0^{(i-1)} + s(\tilde m_0^{(i-1)})\cdot \varepsilon_i \vert^2 \\
	=\;& \sum_{i=2}^N \frac 1 N (\vert \tilde m_0^{(i-1)}\vert + \varepsilon_i )^2
	= \sum_{i=2}^N \frac 1 N \varepsilon_i\Big( 2\vert \tilde m_0^{(i-1)} \vert + \varepsilon_i\Big) + \sum_{i=2}^N \frac 1 N \vert \tilde m_0^{(i-1)} \vert^2.
	\end{align*}
	To derive an upper bound for this, we show by induction that 
	\[
	\vert \tilde m_0^{(i)}\vert \leq \frac 1 i \frac \pi M
	\]
	for all $i=1, \dots, N$.
	The case $i=1$ is clear.
	For $i>1$, it holds by construction that
	\[
	\vert \tilde m_0^{(i-1)} - \tilde x_{i, 0}\vert = \frac \pi M - \varepsilon_i.
	\]
	Since $s(\tilde m_0^{(i-1)}) \neq s(\tilde x_{i, 0})$,
	\[
	\vert \tilde x_{i, 0} \vert
	\leq \vert \tilde m_0^{(i-1)} - \tilde x_{i, 0}\vert
	= \frac \pi M - \varepsilon_i.
	\]
	Furthermore,
	\begin{align*}
	\vert \tilde m_0^{(i)}\vert
	= \Big\vert \sum_{k=1}^i \frac 1 i \tilde x_{k, 0}\Big\vert
	= \Big\vert \frac{i-1}i \tilde m_0^{(i-1)} + \frac 1 i \tilde x_{i, 0}\Big\vert.
	\end{align*}
	Again using $s(m_0^{(i-1)}) \neq s(\tilde x_{i, 0})$, we therefore get
	\[
	\vert \tilde m_0^{(i)}\vert
	\leq \max\Big( \frac{i-1}i \vert \tilde m_0^{(i-1)}\vert, \frac 1 i \vert \tilde x_{i, 0}\vert \Big)
	\leq \max\Big( \frac{i-1}i \vert \tilde m_0^{(i-1)}\vert, \frac 1 i \frac \pi M -\varepsilon_i \Big).
	\]
	We conclude the induction by seeing that using the induction hypothesis,
	\[
	\vert \tilde m_0^{(i)}\vert
	\leq \max\Big( \frac{i-1}i \frac 1 {i-1}\frac\pi M, \frac 1 i \frac \pi M -\varepsilon_i\Big)
	= \frac 1 i \frac \pi M.
	\]
	Thus, for $(\varepsilon_i)$ small enough, $\Phi(\hat\pi)$ is bounded from above by
	\begin{align*}
	\Phi(\hat\pi)
	&\leq \frac 1 N \Big( \frac\pi M \Big)^2
	+ \sum_{i=2}^N \frac 1 N \varepsilon_i\Big( 2\vert \tilde m_0^{(i-1)} \vert + \varepsilon_i\Big) + \sumitoN \frac 1 N \vert \tilde m_0^{(i-1)} \vert^2 \\
	&= \frac 1 N \Big( \frac\pi M \Big)^2
	+ \sum_{i=2}^N \frac 1 N \varepsilon_i\Big( 2\frac 1{i-1}\frac\pi M + \varepsilon_i\Big) + \sum_{i=2}^N \frac 1 N \Big(\frac 1{i-1}\frac\pi M\Big)^2 \\
	&\leq \varepsilon'
	+ \frac 1 N \Big( \frac\pi M \Big)^2
	+ \sumitoN \frac 1 N \Big(\frac 1 i\frac\pi M\Big)^2
	\leq \varepsilon' + \frac 1 N \Big( \frac\pi M \Big)^2\Big( \frac{\pi^2}{6} + 1 \Big)
	\end{align*}
	for any $\varepsilon'>0$.

	
	For $\Phi(\tilde\pi)$, we first note that by construction,
	\[
	\vert x^i_0 - \tilde m_0^{(i-1)}\vert
	= \vert \tilde m_0^{(i-1)} + \frac \pi M + s(\tilde m_0^{(i-1)})\cdot \varepsilon_i - \tilde m_0^{(i-1)}\vert
	\geq \frac \pi M - \varepsilon_i.
	\]
	Further,
	\begin{align*}
	\vert x^i_{M-1} - \tilde m_0^{(i-1)}\vert
	&= \vert x^i_0-\frac {2\pi}M - \tilde m_0^{(i-1)}\vert
	= \vert \tilde m_0^{(i-1)} + \frac \pi M + s(\tilde m_0^{(i-1)})\cdot \varepsilon_i -\frac {2\pi}M- \tilde m_0^{(i-1)}\vert \\
	&= \vert - \frac \pi M + s(\tilde m_{M-1}^{(i-1)})\cdot \varepsilon_i \vert
	\geq \frac \pi M - \varepsilon_i .
	\end{align*}
	Thus,
	\[
	\vert \tilde x_{i, 0} - \tilde m_0^{(i-1)}\vert
	= \min(\vert x^i_0-\tilde m_0^{(i-1)}\vert, \vert x^i_{M-1}-\tilde m_0^{(i-1)}\vert)
	\geq \frac \pi M - \varepsilon_i.
	\]
	Then we can show by induction that for any $2\pi/M>\varepsilon''>0$, if $\varepsilon_i$ is chosen small enough, it holds
	\[
	\Phi(\tilde\pi)
	\geq (1-\frac 1 N H_N)\Big(\frac \pi M - \varepsilon''\Big)^2, \quad \text{where } H_N = \sum_{i=1}^N \frac 1 i.
	\]
	For $N=2$,
	\[
	\Phi(\tilde\pi)
	= \frac 1 4 M\frac 1 M \Big(\frac \pi M - \varepsilon_2\Big)^2
	\geq \Big( 1-\frac 1 2 \Big(1+\frac 1 2\Big) \Big)\Big(\frac \pi M - \varepsilon''\Big)^2
	\]
	for $\varepsilon_2$ small enough.
	For $N>2$, by \eqref{eq:inductive_split_costs},
	\begin{align*}
	\Phi(\tilde\pi)
	&= \sumitoN \sum_{j=1}^M \frac 1 {NM}\vert\tilde x_{i, j}-\tilde m_j\vert^2
	= \sumitoN \frac 1 N \vert \tilde x_{i, 0}-\tilde m_0^{(N)}\vert^2 \\
	&= \Big(1-\frac 1 N\Big)\Big( \frac 1 N \vert \tilde x_{N, 0} - \tilde m_0^{(N-1)}\vert^2 + \sum_{i=1}^{N-1} \frac 1 {N-1}\vert\tilde x_{i, 0}-\tilde m_0^{(N-1)} \vert^2\Big).
	\end{align*}
	By the induction hypothesis and the computations above, we get that this is greater or equal to
	\[
	\Big(1-\frac 1 N\Big)\Big( \frac 1 N \Big( \frac \pi M - \varepsilon_N \Big)^2 +  (1-\frac 1 {N-1}H_{N-1}) \Big( \frac \pi M - \varepsilon'' \Big)^2 \Big).
	\]
	By choosing $\varepsilon_N\leq \varepsilon''$, this is
	\[
	\geq \Big( \frac \pi M - \varepsilon'' \Big)^2 \Big( 1- \frac 1 N \Big) \Big( 1 -\frac 1 {N-1} H_{N-1}+ \frac 1 N \Big),
	\]
	which, by elementary computations, equals
	\[
	\Big( \frac \pi M - \varepsilon'' \Big)^2\Big( 1-\frac 1 N H_N \Big).
	\]
	
	
	The relative error can now be bounded from below by
	\begin{align*}
	\frac{\Phi(\tilde\pi)}{\Phi(\hat\pi)}
	\geq \frac{( \frac \pi M - \varepsilon'' )^2( 1-\frac 1 N H_N )}{\varepsilon' + \frac 1 N ( \frac\pi M )^2( \frac{\pi^2}{6} + 1)}
	\geq \frac{N- H_N}{( \frac M \pi)^2 N\varepsilon' +\frac{\pi^2}{6} + 1} -  \frac{\varepsilon''\frac{2\pi}{M}(1-\frac 1 N H_N)}{\frac 1 N ( \frac\pi M )^2( \frac{\pi^2}{6} + 1)}.
	\end{align*}
	If $(\varepsilon_i)$ is chosen small enough, then this is at least
	\begin{align*}
	\frac{N-H_N}{\frac{\pi^2}{6} + 1} - \varepsilon
	\end{align*}
	for any $\varepsilon>0$.
	Using
	\[
	N-H_N \geq N - (1+\frac 1 2 + \frac 1 3 (N-2))
	\]
	for $N\geq 2$ and
	\[
	9 = 3^2 < \pi^2 < \Big(\frac {22} 7 \Big)^2 = \frac{484}{49} < 10,
	\]
	for $\varepsilon$ small enough, by elementary computations, this is at least
	\[
	\frac 1 4 N - \frac 1 3.
	\]
	Finally, we use Lemma~\ref{lem:torus} as in the proof of Theorem~\ref{thm:ref-lower} to conclude the proof.
\end{proof}
	
\end{appendix}

\end{document}